\numberwithin{equation}{section}
\theoremstyle{plain}
\newtheorem{thm}{\protect\theoremname}[section]
\providecommand{\theoremname}{Theorem}
\begin{document}
\title{On an expansion formula of Liu }
\author{Bing He}
\address{School of Mathematics and Statistics, HNP-LAMA, Central South University
\\
Changsha 410083, Hunan, People's Republic of China}
\email{yuhelingyun@foxmail.com; yuhe001@foxmail.com}
\keywords{expansion formula; transformation formula; root system $A_{n}$; $A_{n}$
extension; basic hypergeometric series; infinite product; Rogers'
$\text{}_{6}\phi_{5}$ summation; Sylvester's identity; $(q)_{\infty}^{m}$;
$\pi_{q}$; Rogers-Fine identity; Liu's extension of Rogers' $_{6}\phi_{5}$
summation; a generalization of Fang's identity; Andrews' expansion
formula}
\subjclass[2000]{33D67; 33D15}
\begin{abstract}
In this paper, we extend an expansion formula of Liu to multiple
basic hypergeometric series over the root system $A_{n}.$ The usefulness
of Liu's expansion formula in special functions and number theory
has been shown by Liu and many others. We first establish a very general
multiple expansion formula over the root system $A_{n}$ and then
deduce several $A_{n}$ extensions of Liu's expansion formula. From
these multiple formulas, we derive two groups of multiple expansion
formulas for infinite products. As applications, we deduce an $A_{n}$
Rogers' $\text{}_{6}\phi_{5}$ summation, an $A_{n}$ extension of
Sylvester's identity, some multiple expansion formulas for $(q)_{\infty}^{m},\text{\ensuremath{\pi_{q}}}$
and $1/\pi_{q}$, two $A_{n}$ extensions of the Rogers-Fine identity,
an $A_{n}$ extension of Liu's extension of Rogers' non-terminating
$_{6}\phi_{5}$ summation, an $A_{n}$ extension of a generalization
of Fang's identity and an $A_{n}$ extension of Andrews' expansion
formula.
\end{abstract}

\maketitle

\section{Introduction}

In a battery of papers, Liu \cite{L2,L1,L3} extended many summation
formulas and transformation formulas in the theory of basic hypergeometric
series and proved the following well-known expansion formula \cite[Theorem 1.1]{L2}\cite[Theorem 1.2]{L1}:
\begin{thm}
Suppose that $f(x)$ is an analytic function near $x=0,$ then, under
suitable convergence conditions,
\begin{equation}
\begin{aligned} & \frac{(\alpha q,\alpha ab/q)_{\infty}}{(\alpha a,\alpha b)_{\infty}}f(\alpha a)\\
 & =\sum_{n=0}^{\infty}\frac{(1-\alpha q^{2n})(\alpha,q/a)_{n}(a/q)^{n}}{(1-\alpha)(q,\alpha a)_{n}}\sum_{k=0}^{n}\frac{(q^{-n},\alpha q^{n})_{k}q^{k}}{(q,\alpha b)_{k}}f(\alpha q^{k+1}),
\end{aligned}
\label{eq:10-1}
\end{equation}
where the $q$-shifted factorial $(z)_{\infty}$ is defined by 
\[
(z)_{\infty}:=(z;q)_{\infty}=\prod_{k=0}^{\infty}(1-zq^{k})
\]
 and 
\[
(z)_{n}=\frac{(z)_{\infty}}{(zq^{n})_{\infty}}.
\]
\end{thm}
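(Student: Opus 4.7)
The plan is to exploit the fact that both sides of \eqref{eq:10-1} depend linearly on $f$ and to verify the identity on a family of test functions rich enough to recover an arbitrary analytic $f$ by linearity. A convenient choice is the one-parameter family
\[
f_{c}(x)=\frac{1}{(cx;q)_{\infty}}=\sum_{m=0}^{\infty}\frac{c^{m}x^{m}}{(q;q)_{m}},
\]
with parameter $c$ ranging over a neighbourhood of $0$: as $c$ varies, the induced Taylor coefficients separate, so establishing the identity for every $f_{c}$ yields it for every admissible analytic $f$.

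Substituting $f=f_{c}$, the left-hand side collapses to
\[
\frac{(\alpha q,\alpha ab/q;q)_{\infty}}{(\alpha a,\alpha b,\alpha ac;q)_{\infty}}.
\]
On the right-hand side, the identity $1/(c\alpha q^{k+1};q)_{\infty}=(c\alpha q;q)_{k}/(c\alpha q;q)_{\infty}$ extracts $1/(c\alpha q;q)_{\infty}$ from the inner sum and leaves a terminating basic hypergeometric sum in $k$, which I would evaluate in closed form using a standard terminating summation ($q$-Pfaff--Saalsch\"utz or a $q$-Chu--Vandermonde-type identity, possibly after a routine inversion of the order of summation).

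Substituting this closed form back, the outer sum in $n$ should take the shape of Rogers' nonterminating very-well-poised $_{6}\phi_{5}$ summation --- explicitly featured in the keyword list --- whose evaluation should return exactly the product displayed on the left. The composition of these two summation steps then proves the identity for $f=f_{c}$, hence for all admissible analytic $f$.

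The main obstacle I anticipate is the middle step: aligning the parameters of the inner terminating sum with a known summation theorem, and then verifying that after substitution the outer sum is genuinely a Rogers' $_{6}\phi_{5}$. The bookkeeping of $q$-shifted factorials --- especially the very-well-poised factor $(1-\alpha q^{2n})/(1-\alpha)$ and the interaction of $(q^{-n},\alpha q^{n})_{k}$ with the extracted $(c\alpha q;q)_{k}$ --- is the most delicate part. The analytic wrapping around this calculation (requiring, for example, $|\alpha a|,|\alpha b|,|\alpha ac|<1$ to justify the series manipulations and the Taylor-coefficient extension from $f_{c}$ to an arbitrary analytic $f$) is standard.
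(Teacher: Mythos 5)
Your outer frame (linearity in $f$ plus the dense test family $f_{c}(x)=1/(cx;q)_{\infty}$) is sound, and the left-hand side does collapse as you say. The gap is the middle step: the inner sum does not evaluate in closed form. After extracting $1/(c\alpha q)_{\infty}$ you are left with
\[
\sum_{k=0}^{n}\frac{(q^{-n},\alpha q^{n},c\alpha q)_{k}}{(q,\alpha b)_{k}}\,q^{k},
\]
a terminating series with three nonzero numerator parameters and only one nonzero denominator parameter besides $q$, i.e.\ a ${}_{3}\phi_{2}$ with a zero lower parameter. It is not a ${}_{2}\phi_{1}$, so $q$-Chu--Vandermonde does not apply; it is not balanced, so $q$-Pfaff--Saalsch\"utz does not apply; and already at $n=1$ it equals $\alpha\bigl(q(1+c)-b-c\alpha q^{2}\bigr)/(1-\alpha b)$, whose numerator has four terms and is not a monomial times a product of factors $(1-Xq^{j})$, so no product evaluation exists for generic parameters. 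Consequently the outer sum is not a Rogers ${}_{6}\phi_{5}$ either. The collapse you have in mind occurs only for $f\equiv1$: there the inner sum really is a $q$-Chu--Vandermonde and the outer sum really is Rogers' nonterminating ${}_{6}\phi_{5}$, which is exactly how Liu recovers that summation from \eqref{eq:10-1}. For any nonconstant $f$ (monomials $x^{m}$ included, where the inner argument becomes $q^{m+1}$) the extra parameter destroys both evaluations, and the right-hand side stays an irreducibly double sum --- compare \eqref{eq:10-7}, the instance of \eqref{eq:10-1} with $f$ a ratio of infinite products, whose inner ${}_{4}\phi_{3}$ likewise has no closed form and which Liu uses precisely to reach Andrews-type double series.

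The underlying reason is that \eqref{eq:10-1} is an expansion theorem, not a composition of two summations: the coefficients on the right come from inverting a lower-triangular matrix. That is how this paper's machinery produces it --- Theorem \ref{t1} is proved by constructing the inverse pair $\boldsymbol{F},\boldsymbol{G}$ in \eqref{eq:a1-7}--\eqref{eq:a1-8} and applying the inverse relation \eqref{eq:a1-2}; specializing $\beta(\boldsymbol{j})=\prod_{r}\delta_{j_{r},0}$ and $K(y)=f(\alpha qy)(\alpha q,\alpha by)_{\infty}/(\alpha b,\alpha qy)_{\infty}$ in the $n=1$ case \eqref{eq:a1-21} yields \eqref{eq:10-1}. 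To repair your argument you would need to replace the two claimed evaluations by such an inversion (or by Liu's $q$-operator / analytic-expansion method); the reduction to test functions by itself buys nothing, since for each fixed $c$ the identity to be proved is still the same non-telescoping double sum.
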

Here and in what follows, we always assume that $0<|q|<1$ and we
also use the compact notation: 
\[
(a_{1},a_{2},\cdots,a_{n})_{k}=(a_{1})_{k}(a_{2})_{k}\cdots(a_{n})_{k},\;k\;\mathrm{is}\;\mathrm{a}\;\mathrm{nonnegative}\;\mathrm{integer}\;\mathrm{and}\;\infty.
\]
The expansion formula \eqref{eq:10-1} is very useful and was used
by Liu in \cite{L2} to extend Rogers' non-terminating $_{6}\phi_{5}$
summation formula \cite[eq.(2.7.1)]{GR}, to give a new proof of the
orthogonality relation \cite[eq.(7.5.15)]{GR} for the Askey-Wilson
polynomials and to derive an interesting double summation formula
which is equivalent to an identity of Andrews \cite[Theorem 5]{A86}.

Our work belongs to the context of multiple basic hypergeometric series
over the root system $A_{n}.$ Series of this type have been investigated
systematically over the last many years and many important results
in basic hypergeometric series have been extended. See Schlosser \cite{S}
for a comprehensive survey of this type of series. It should be mentioned
that S.C. Milne made important contributions to multiple basic hypergeometric
series over the root system $A_{n}$. See \cite{C1}, \cite{C2},
\cite{F} and \cite{W09} for certain generalizations of Milne's $A_{n}$
$q$-summation formulas. Recently, G. Bhatnager and S. Rai \cite{BR}
extended some expansion formulas to the context of multiple series
over root systems. $A_{n}$ Extensions of Liu's results in \cite{L02,L1}
have not been fully studied yet, even though his resuls are very useful
in special functions and number theory \cite{CL,L2,L1,L3}. Inspired
by their work, we will extend \eqref{eq:10-1} to multiple basic hypergeometric
series over the root system $A_{n}.$

Before stating our results, we need to mention some notations on multiple
basic hypergeometric series over the root system $A_{n}.$ The series
we will discuss are of the form:
\[
\sum_{{k_{r}\geq0\atop r=1,\cdots,n}}S(\boldsymbol{k}),
\]
where $\boldsymbol{k}=(k_{1},\cdots,k_{n})$ and $k_{1},\cdots,k_{n}$
are nonnegative integers. Similarly, $\boldsymbol{j}$ and $\boldsymbol{m}$
are multi-indices of nonnegative integers. This type of series reduces
to classical basic hypergeometric series when $n=1.$ We also use
the notation $|\boldsymbol{k}|:=k_{1}+\cdots+k_{n}.$ Series of this
type are called $A_{n}$ basic hypergeometric series if they only
have 
\[
\prod_{1\leq r<s\leq n}\frac{1-q^{k_{r}-k_{s}}x_{r}/x_{s}}{1-x_{r}/x_{s}}
\]
as a factor of $S(\boldsymbol{k}).$

The main result of this paper is to extend \eqref{eq:10-1} to the
following multiple transformation formula over the root system $A_{n}.$ 
\begin{thm}
\label{t1} Let $K(y)$ and $\beta(j)$ be chosen so that the series
on both sides are analytic in $y_{1},y_{2},\cdots,y_{n}$ and converge
absolutely for each $y_{r}$ in a disk around the origin; or, $y_{r}=q^{N_{r}},$
for $r=1,2,\cdots,n,$ and so the series on both sides terminate.
Then
\begin{equation}
\begin{aligned} & K(\boldsymbol{y})\sum_{{j_{r}\geq0\atop r=1,2,\cdots,n}}\prod_{1\leq r<s\leq n}\frac{1-q^{j_{r}-j_{s}}x_{r}/x_{s}}{1-x_{r}/x_{s}}\prod_{r,s=1}^{n}(x_{r}/x_{s}y_{s})_{j_{r}}\\
 & \quad\times\frac{1-bq^{2|\boldsymbol{j}|}}{(1-b)(bqy_{1}y_{2}\cdots y_{n})_{|\boldsymbol{j}|}}(y_{1}y_{2}\cdots y_{n})^{|\boldsymbol{j}|}q^{\sum_{r=1}^{n}(r-1)j_{r}}\beta(\boldsymbol{j})\\
 & =\sum_{{k_{r}\geq0\atop r=1,2,\cdots,n}}\prod_{1\leq r<s\leq n}\frac{1-q^{k_{r}-k_{s}}x_{r}/x_{s}}{1-x_{r}/x_{s}}\prod_{r,s=1}^{n}\frac{(x_{r}/x_{s}y_{s})_{k_{r}}}{(qx_{r}/x_{s})_{k_{r}}}\\
 & \times\frac{(1-aq^{2|\boldsymbol{k}|})(a)_{|\boldsymbol{k}|}}{(1-a)(aqy_{1}y_{2}\cdots y_{n})_{|\boldsymbol{k}|}}(y_{1}y_{2}\cdots y_{n})^{|\boldsymbol{k}|}q^{\sum_{r=1}^{n}(r-1)k_{r}}
\end{aligned}
\label{eq:t1-1}
\end{equation}
\[
\begin{aligned} & \times\sum_{{0\leq j_{r}\leq k_{r}\atop r=1,2,\cdots,n}}\prod_{1\leq r<s\leq n}\frac{1-q^{j_{r}-j_{s}}x_{r}/x_{s}}{1-x_{r}/x_{s}}\prod_{r,s=1}^{n}(q^{-k_{s}}x_{r}/x_{s})_{j_{r}}\\
 & \;\times\frac{(aq^{|\boldsymbol{k}|},bq)_{|\boldsymbol{j}|}}{(b)_{2|\boldsymbol{j}|}}(-1)^{|\boldsymbol{j}|}q^{\sum_{r=1}^{n}rj_{r}+{|\boldsymbol{j}| \choose 2}}\beta(\boldsymbol{j})\\
 & \;\times\sum_{{0\leq m_{r}\leq k_{r}-j_{r}\atop r=1,2,\cdots,n}}\prod_{1\leq r<s\leq n}\frac{1-q^{m_{r}-m_{s}+j_{r}-j_{s}}x_{r}/x_{s}}{1-q^{j_{r}-j_{s}}x_{r}/x_{s}}q^{\sum_{r=1}^{n}rm_{r}}K(q^{\boldsymbol{m+j}})\\
 & \quad\times\prod_{r,s=1}^{n}\frac{(q^{-k_{s}+j_{r}}x_{r}/x_{s})_{m_{r}}}{(q^{1+j_{r}-j_{s}}x_{r}/x_{s})_{m_{r}}}\frac{(aq^{|\boldsymbol{k}|+|\boldsymbol{j}|},bq^{1+|\boldsymbol{j}|})_{|\boldsymbol{m}|}}{(aq)_{|\boldsymbol{m}|+|\boldsymbol{j}|}(bq^{2|\boldsymbol{j}|+1})_{|\boldsymbol{m}|}}.
\end{aligned}
\]
\end{thm}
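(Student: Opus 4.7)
The plan is to establish Theorem \ref{t1} by working from the right-hand side and reducing the triple multi-sum to a single multi-sum matching the left-hand side, using an $A_n$ transformation of Bailey/Watson type as the engine of the argument.

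First, I would interchange the order of summation, making $\boldsymbol{j}$ and $\boldsymbol{m}$ the outer indices and $\boldsymbol{k}$ the inner one. The conditions $0\le j_r\le k_r$ and $0\le m_r\le k_r-j_r$ become $k_r\ge j_r+m_r$, so the substitution $k_r\mapsto k_r+j_r+m_r$ converts the $\boldsymbol{k}$-sum into a sum over $k_r\ge 0$. At this point I would carefully rewrite the factors involving $q^{-k_s}$, the $A_n$ Vandermonde product $\prod_{r<s}(1-q^{k_r-k_s}x_r/x_s)/(1-x_r/x_s)$, and the balanced ratios $(x_r/x_s y_s)_{k_r}/(qx_r/x_s)_{k_r}$ using multi-index versions of $(q^{-n})_k=(-1)^k(q)_n q^{\binom{k}{2}-nk}/(q)_{n-k}$ and the reflection $(a)_{n+k}=(a)_n(aq^n)_k$. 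These are the standard moves used in Milne's and Bhatnagar--Schlosser's $A_n$ calculus, and they should produce a clean inner $\boldsymbol{k}$-sum with the very-well-poised/balanced shape.

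Next, the key step is to recognize the resulting inner sum as an instance of an $A_n$ extension of a known basic hypergeometric summation: it should reduce, via an $A_n$ nonterminating $_6\phi_5$ summation (or equivalently the $A_n$ Jackson $_8\phi_7$ sum of Milne, specialized), to a closed-form product. Summing out $\boldsymbol{k}$ in this way collapses the two-layer $(\boldsymbol{k},\boldsymbol{m})$ sum into the single outer sum indexed by $\boldsymbol{j'}:=\boldsymbol{j}+\boldsymbol{m}$; the factor $K(q^{\boldsymbol{m+j}})$ then becomes $K(q^{\boldsymbol{j'}})$, and after shifting, the identity $\sum_{\boldsymbol{j'}}K(q^{\boldsymbol{j'}})(\cdots)=K(\boldsymbol{y})\sum_{\boldsymbol{j}}(\cdots)\beta(\boldsymbol{j})$ is obtained by using a $q$-Taylor-type expansion of the analytic function $K$ (justified by the analyticity hypothesis on $K$, or trivially in the terminating case $y_r=q^{N_r}$).

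The main obstacle, I expect, is the bookkeeping in the middle step: making the products $\prod_{r,s=1}^{n}$ telescope correctly after the index shift $k_r\mapsto k_r+j_r+m_r$ so that the inner sum genuinely matches an existing $A_n$ summation in a form that can be applied termwise. A secondary but nontrivial point is justifying the interchange of the three sums and the termwise $q$-Taylor expansion of $K$; in the analytic case one appeals to absolute convergence in a polydisc, while in the terminating case $y_r=q^{N_r}$ everything is a finite sum and the manipulation is automatic. Once the inner sum is identified with the correct $A_n$ summation theorem, the rest is a direct matching of prefactors against the left-hand side of \eqref{eq:t1-1}.
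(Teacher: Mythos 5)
Your strategy --- reverse the order of summation, evaluate the inner $\boldsymbol{k}$-sum by a very-well-poised summation, and collapse --- is sound in the terminating case $y_r=q^{N_r}$, and there it essentially coincides with the paper's argument: the paper packages precisely that inner evaluation as the orthogonality relation $\sum_{\boldsymbol{k}}F_{\boldsymbol{Nk}}(a)G_{\boldsymbol{km}}(a)=\prod_{r}\delta_{N_{r}m_{r}}$ for a pair of Milne-type inverse matrices, which is the terminating $A_n$ ${}_6\phi_5$/Jackson sum in disguise. In that case the inner $\boldsymbol{k}$-sum is a componentwise Kronecker delta forcing $j_r+m_r=N_r$, the double $(\boldsymbol{j},\boldsymbol{m})$-sum collapses to $K(q^{\boldsymbol{N}})\sum_{\boldsymbol{j}}(\cdots)\beta(\boldsymbol{j})$ by direct matching, and no $q$-Taylor expansion of $K$ is needed.

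The genuine gap is your treatment of the non-terminating case. You propose to justify the interchange of the three sums by ``absolute convergence in a polydisc'' and then to evaluate the inner $\boldsymbol{k}$-sum by a \emph{non}terminating ${}_6\phi_5$. Both steps fail. After the shift $k_r\mapsto k_r+j_r+m_r$ the inner series has argument $y_1\cdots y_n q^{-|\boldsymbol{j}|-|\boldsymbol{m}|}$ (up to harmless factors), so it diverges for all but finitely many outer indices $(\boldsymbol{j},\boldsymbol{m})$; and where it does converge, the relevant ${}_6\phi_5$ specialization carries the factor $(1;q)_\infty=0$ in its product side, so each convergent inner sum is identically zero. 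A minimal instance already shows this: take $n=1$, $a=0$, $K\equiv1$, $\beta_j=\delta_{j0}$. The right-hand side is $\sum_{k\ge0}\sum_{m=0}^{k}\frac{(1/y)_k}{(q)_k}y^{k}\frac{(q^{-k})_m}{(q)_m}q^{m}=1$ (the inner $m$-sum equals $\delta_{k0}$ by the $q$-binomial theorem), whereas interchanging and summing over $k\ge m$ first gives, for each fixed $m$ with $|yq^{-m}|<1$,
\begin{equation*}
\sum_{k\ge m}\frac{(1/y)_k(q^{-k})_m}{(q)_k}y^{k}
=(-1)^m q^{\binom{m}{2}}(yq^{-m})^m(1/y)_m\,\frac{(1;q)_\infty}{(yq^{-m};q)_\infty}=0,
\end{equation*}
and divergence for larger $m$. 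So the triple series is not absolutely convergent, the rearrangement is invalid, and the ``sum $\boldsymbol{k}$ first'' route yields $0\neq1$. The repair is exactly what the paper does: prove the identity for $y_r=q^{N_r}$ (where every sum is finite and your interchange-plus-Jackson argument is legitimate), observe that both sides are analytic in $y_1,\dots,y_n$ near the origin under the stated hypotheses, and conclude the general case by analytic continuation from the points $q^{N_1},\dots,q^{N_n}$. Your write-up presents the terminating case as merely an easy parallel branch rather than as the case from which the non-terminating statement must be deduced, and that is the missing idea.
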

The identity \eqref{eq:t1-1} in Theorem \ref{t1} is an $A_{n}$
extension of the following expansion formula:
\begin{equation}
\begin{aligned} & K(y)\sum_{j=0}^{\infty}\frac{(1-bq^{2j})(1/y)_{j}}{(1-b)(bqy)_{j}}y^{j}\beta_{j}\\
 & =\sum_{k=0}^{\infty}\frac{(1-aq^{2k})(1/y,a)_{k}}{(1-a)(q,aqy)_{k}}y^{k}\sum_{j=0}^{k}\frac{(q^{-k},aq^{k},bq)_{j}}{(b)_{2j}}(-1)^{j}q^{{j+1 \choose 2}}\beta_{j}\\
 & \times\sum_{m=0}^{k-j}\frac{(q^{-k+j},aq^{j+k},bq^{j+1})_{m}}{(q,bq^{1+2j})_{m}(aq)_{j+m}}q^{m}K(q^{j+m}).
\end{aligned}
\label{eq:a1-21}
\end{equation}
Take $y_{1}\mapsto y$ in the $n=1$ case of \eqref{eq:t1-1} in Theorem
\ref{t1} to obtain \eqref{eq:a1-21} easily.  Another $A_{n}$ extension
of \eqref{eq:a1-21} can be found in \cite[Theorem 2.2]{BR}.

Theorem \ref{t1} plays an important role in multiple $q$-series
over the root system $A_{n},$ from which many important results can
be derived. In this paper, we will use it to deduce several summation
formulas and transformation formulas for $q$-series over the root
system $A_{n}.$

We first apply the identity in Theorem \ref{t1} to obtain four $A_{n}$
extensions of \eqref{eq:10-1}.
\begin{thm}
\textup{\label{th-1} (First $A_{n}$ extension of \eqref{eq:10-1})
}Let $f(y)$ be such that the series on both sides are analytic in
$a_{1},a_{2},\cdots,a_{n}$ and converge absolutely for each $a_{r}$
in a disk around the origin; or, $a_{r}=q^{N_{r}},$ for $r=1,2,\cdots,n,$
and so the series terminate. Then
\begin{equation}
\begin{aligned} & f(\alpha a_{1}\cdots a_{n}q^{1-n})\frac{(\alpha q,\alpha a_{1}\cdots a_{n}b/q^{n})_{\infty}}{(\alpha a_{1}\cdots a_{n}q^{1-n},\alpha b)_{\infty}}\\
 & =\sum_{{k_{r}\geq0\atop r=1,2,\cdots,n}}\prod_{1\leq r<s\leq n}\frac{1-q^{k_{r}-k_{s}}x_{r}/x_{s}}{1-x_{r}/x_{s}}\prod_{r,s=1}^{n}\frac{(qx_{r}/x_{s}a_{s})_{k_{r}}}{(qx_{r}/x_{s})_{k_{r}}}\\
 & \times\frac{(1-\alpha q^{2|\boldsymbol{k}|})(\alpha)_{|\boldsymbol{k}|}}{(1-\alpha)(\alpha a_{1}\cdots a_{n}q^{1-n})_{|\boldsymbol{k}|}}(a_{1}\cdots a_{n})^{|\boldsymbol{k}|}q^{\sum_{r=1}^{n}(r-1)k_{r}-n|\boldsymbol{k}|}\\
 & \;\times\sum_{{0\leq m_{r}\leq k_{r}\atop r=1,2,\cdots,n}}\prod_{1\leq r<s\leq n}\frac{1-q^{m_{r}-m_{s}}x_{r}/x_{s}}{1-x_{r}/x_{s}}\prod_{r,s=1}^{n}\frac{(q^{-k_{s}}x_{r}/x_{s})_{m_{r}}}{(qx_{r}/x_{s})_{m_{r}}}\\
 & \quad\times\frac{(\alpha q^{|\boldsymbol{k}|})_{|\boldsymbol{m}|}}{(\alpha b)_{|\boldsymbol{m}|}}q^{\sum_{r=1}^{n}rm_{r}}f(\alpha q^{1+|\boldsymbol{m}|}).
\end{aligned}
\label{eq:a1-17}
\end{equation}
\end{thm}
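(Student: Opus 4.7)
The plan is to deduce Theorem \ref{th-1} directly from Theorem \ref{t1} via a single well-chosen specialization. I would set $a=\alpha$, $y_s=a_s/q$ for $s=1,2,\ldots,n$, take the multi-coefficient
\[
\beta(\boldsymbol{j})\;=\;\delta_{\boldsymbol{j},\boldsymbol{0}}\qquad(1\text{ if }\boldsymbol{j}=\boldsymbol{0},\;0\text{ otherwise}),
\]
and choose the auxiliary function
\[
K(\boldsymbol{y})\;=\;\frac{(\alpha q,\,\alpha b\,y_1y_2\cdots y_n)_{\infty}}{(\alpha q\,y_1y_2\cdots y_n,\,\alpha b)_{\infty}}\,f(\alpha q\,y_1y_2\cdots y_n).
\]
Under $y_s=a_s/q$ one has $y_1\cdots y_n=a_1\cdots a_n/q^n$, so $(x_r/(x_sy_s))_{k_r}=(qx_r/(x_sa_s))_{k_r}$, $(aqy_1\cdots y_n)_{|\boldsymbol{k}|}=(\alpha a_1\cdots a_nq^{1-n})_{|\boldsymbol{k}|}$, and $(y_1\cdots y_n)^{|\boldsymbol{k}|}=(a_1\cdots a_n)^{|\boldsymbol{k}|}q^{-n|\boldsymbol{k}|}$, all of which already appear in \eqref{eq:a1-17}. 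The $\beta$-choice is admissible under the convergence hypothesis of Theorem \ref{t1} because every sum involving $\beta$ becomes a finite single term.

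With $\beta=\delta_{\boldsymbol{j},\boldsymbol{0}}$ both $\boldsymbol{j}$-summations in \eqref{eq:t1-1} reduce to the single term $\boldsymbol{j}=\boldsymbol{0}$. On the left-hand side every Pochhammer at $\boldsymbol{j}=\boldsymbol{0}$ equals $1$, so the LHS becomes $K(\boldsymbol{y})$; evaluating at $y_s=a_s/q$ gives
\[
K(\boldsymbol{y})\;=\;\frac{(\alpha q,\,\alpha a_1\cdots a_nb/q^n)_{\infty}}{(\alpha a_1\cdots a_nq^{1-n},\,\alpha b)_{\infty}}\,f(\alpha a_1\cdots a_nq^{1-n}),
\]
which is precisely the LHS of \eqref{eq:a1-17}. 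On the right-hand side the intermediate $\boldsymbol{j}$-sum likewise collapses: every $q^{j_r-j_s}$ factor becomes $1$, the sign factor $(-1)^{|\boldsymbol{j}|}q^{\sum_rrj_r+{|\boldsymbol{j}|\choose 2}}$ equals $1$, and all Pochhammers of subscript $|\boldsymbol{j}|$ trivialize, so this middle layer of summation disappears entirely.

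What remains is the double sum over $\boldsymbol{k}$ and $\boldsymbol{m}$. In the $\boldsymbol{m}$-layer of \eqref{eq:t1-1}, setting $\boldsymbol{j}=\boldsymbol{0}$ turns $(q^{-k_s+j_r}x_r/x_s)_{m_r}$ into $(q^{-k_s}x_r/x_s)_{m_r}$, turns $(q^{1+j_r-j_s}x_r/x_s)_{m_r}$ into $(qx_r/x_s)_{m_r}$, and reduces the coefficient to
\[
\frac{(\alpha q^{|\boldsymbol{k}|},\,bq)_{|\boldsymbol{m}|}}{(\alpha q)_{|\boldsymbol{m}|}(bq)_{|\boldsymbol{m}|}}\;=\;\frac{(\alpha q^{|\boldsymbol{k}|})_{|\boldsymbol{m}|}}{(\alpha q)_{|\boldsymbol{m}|}}
\]
after the $(bq)_{|\boldsymbol{m}|}$ factors cancel. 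Using $(z)_{|\boldsymbol{m}|}=(z)_{\infty}/(zq^{|\boldsymbol{m}|})_{\infty}$, one checks that $K(q^{\boldsymbol{m}})=f(\alpha q^{1+|\boldsymbol{m}|})(\alpha q)_{|\boldsymbol{m}|}/(\alpha b)_{|\boldsymbol{m}|}$; the $(\alpha q)_{|\boldsymbol{m}|}$ supplied by $K$ then cancels the $(\alpha q)_{|\boldsymbol{m}|}$ in the denominator above, leaving precisely $(\alpha q^{|\boldsymbol{k}|})_{|\boldsymbol{m}|}/(\alpha b)_{|\boldsymbol{m}|}$ together with $f(\alpha q^{1+|\boldsymbol{m}|})$, as demanded in \eqref{eq:a1-17}.

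The principal obstacle is not conceptual but notational: Theorem \ref{t1} carries many $\boldsymbol{j}$-dependent Pochhammer and sign factors in both its $\boldsymbol{j}$- and $\boldsymbol{m}$-layers, and one must carefully verify that each one either trivializes at $\boldsymbol{j}=\boldsymbol{0}$ or cancels against a factor produced by the chosen $K$, with no spurious constant remaining. Once this bookkeeping is completed, the identity \eqref{eq:a1-17} falls out.
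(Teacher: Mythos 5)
Your proposal is correct and is essentially the paper's own proof: the author likewise sets $\beta(\boldsymbol{j})=\prod_{r=1}^{n}\delta_{j_{r},0}$ in \eqref{eq:t1-1} to obtain the collapsed identity \eqref{eq:a1-16}, then substitutes the same $K(\boldsymbol{y})=f(\alpha qy_{1}\cdots y_{n})\,(\alpha q,\alpha by_{1}\cdots y_{n})_{\infty}/(\alpha b,\alpha qy_{1}\cdots y_{n})_{\infty}$ and replaces $a$ by $\alpha$ and $y_{r}$ by $a_{r}/q$. Your bookkeeping of the cancellations (the $(bq)_{|\boldsymbol{m}|}$ and $(\alpha q)_{|\boldsymbol{m}|}$ factors) checks out.
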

\begin{thm}
\label{th-2} \textup{(Second $A_{n}$ extension of \eqref{eq:10-1})
}Let $f(y_{1},y_{2},\cdots,y_{n})$ be such that the series on both
sides are analytic in $a_{1},a_{2},\cdots,a_{n}$ and converge absolutely
for each $a_{r}$ in a disk around the origin; or, $a_{r}=q^{N_{r}},$
for $r=1,2,\cdots,n,$ and so the series terminate. Then
\begin{equation}
\begin{aligned} & f(\alpha a_{1},\cdots,\alpha a_{n})\frac{(\alpha q,\alpha a_{1}\cdots a_{n}b/q^{n})_{\infty}}{(\alpha b,\alpha a_{1}\cdots a_{n}q^{1-n})_{\infty}}\\
 & =\sum_{{k_{r}\geq0\atop r=1,2,\cdots,n}}\prod_{1\leq r<s\leq n}\frac{1-q^{k_{r}-k_{s}}x_{r}/x_{s}}{1-x_{r}/x_{s}}\prod_{r,s=1}^{n}\frac{(qx_{r}/x_{s}a_{s})_{k_{r}}}{(qx_{r}/x_{s})_{k_{r}}}\\
 & \times\frac{(1-\alpha q^{2|\boldsymbol{k}|})(\alpha)_{|\boldsymbol{k}|}}{(1-\alpha)(\alpha a_{1}\cdots a_{n}q^{1-n})_{|\boldsymbol{k}|}}(a_{1}\cdots a_{n})^{|\boldsymbol{k}|}q^{\sum_{r=1}^{n}(r-1)k_{r}-n|\boldsymbol{k}|}\\
 & \;\times\sum_{{0\leq m_{r}\leq k_{r}\atop r=1,2,\cdots,n}}\prod_{1\leq r<s\leq n}\frac{1-q^{m_{r}-m_{s}}x_{r}/x_{s}}{1-x_{r}/x_{s}}\prod_{r,s=1}^{n}\frac{(q^{-k_{s}}x_{r}/x_{s})_{m_{r}}}{(qx_{r}/x_{s})_{m_{r}}}\\
 & \quad\times\frac{(\alpha q^{|\boldsymbol{k}|})_{|\boldsymbol{m}|}}{(\alpha b)_{|\boldsymbol{m}|}}q^{\sum_{r=1}^{n}rm_{r}}f(\alpha q^{1+m_{1}},\cdots,\alpha q^{1+m_{n}}).
\end{aligned}
\label{eq:a1-18}
\end{equation}
\end{thm}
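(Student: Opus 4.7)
The plan is to deduce Theorem \ref{th-2} directly from Theorem \ref{t1} by a specialization of the free parameters together with judicious choices of the function $K$ and the coefficients $\beta(\boldsymbol{j})$. First I would substitute $y_s=a_s/q$ for $s=1,\dots,n$, $a\mapsto\alpha$, and $b\mapsto\alpha b$ in \eqref{eq:t1-1}. Under this substitution $(x_r/(x_sy_s))_{k_r}$ becomes $(qx_r/(x_sa_s))_{k_r}$, $(y_1\cdots y_n)^{|\boldsymbol{k}|}$ becomes $(a_1\cdots a_n)^{|\boldsymbol{k}|}q^{-n|\boldsymbol{k}|}$, and $(aqy_1\cdots y_n)_{|\boldsymbol{k}|}$ becomes $(\alpha a_1\cdots a_n q^{1-n})_{|\boldsymbol{k}|}$, reproducing exactly the outer $\boldsymbol{k}$-sum factors in \eqref{eq:a1-18}. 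I then take $K(\boldsymbol{y})=f(\alpha q y_1,\dots,\alpha q y_n)$, so that $K(a_1/q,\dots,a_n/q)=f(\alpha a_1,\dots,\alpha a_n)$ produces the multivariate evaluation on the LHS of \eqref{eq:a1-18}, while $K(q^{\boldsymbol{m}+\boldsymbol{j}})=f(\alpha q^{1+m_1+j_1},\dots,\alpha q^{1+m_n+j_n})$ feeds into the innermost sum on the RHS of \eqref{eq:t1-1}.

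Next I would select $\beta(\boldsymbol{j})$ so that the outer $\boldsymbol{j}$-sum on the LHS of \eqref{eq:t1-1} becomes an $A_n$ very-well-poised $_6\phi_5$ series, summable in closed form via an $A_n$ extension of Rogers' $_6\phi_5$ summation to the ratio of infinite products $(\alpha q,\alpha a_1\cdots a_n b/q^n)_\infty/(\alpha b,\alpha a_1\cdots a_n q^{1-n})_\infty$; multiplying by $K$ then gives the LHS of \eqref{eq:a1-18}. For the RHS of \eqref{eq:t1-1}, I would interchange the order of the inner two sums by setting $\boldsymbol{l}=\boldsymbol{m}+\boldsymbol{j}$, with $0\le j_r\le l_r\le k_r$. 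Because $K(q^{\boldsymbol{m}+\boldsymbol{j}})=K(q^{\boldsymbol{l}})$ now depends only on $\boldsymbol{l}$, it factors out of the inner $\boldsymbol{j}$-sum. The elementary cancellations
\[
\prod_{r<s}\frac{1-q^{j_r-j_s}x_r/x_s}{1-x_r/x_s}\cdot\frac{1-q^{l_r-l_s}x_r/x_s}{1-q^{j_r-j_s}x_r/x_s}=\prod_{r<s}\frac{1-q^{l_r-l_s}x_r/x_s}{1-x_r/x_s}
\]
and $(q^{-k_s}x_r/x_s)_{j_r}(q^{-k_s+j_r}x_r/x_s)_{l_r-j_r}=(q^{-k_s}x_r/x_s)_{l_r}$ consolidate the cross factors, and the remaining inner $\boldsymbol{j}$-sum (for fixed $\boldsymbol{l},\boldsymbol{k}$) should collapse to the coefficient of $K(q^{\boldsymbol{l}})$ in \eqref{eq:a1-18}, namely $\prod_{r<s}\tfrac{1-q^{l_r-l_s}x_r/x_s}{1-x_r/x_s}\prod_{r,s}\tfrac{(q^{-k_s}x_r/x_s)_{l_r}}{(qx_r/x_s)_{l_r}}\cdot\tfrac{(\alpha q^{|\boldsymbol{k}|})_{|\boldsymbol{l}|}}{(\alpha b)_{|\boldsymbol{l}|}}q^{\sum r l_r}$; renaming $\boldsymbol{l}\mapsto\boldsymbol{m}$ then recovers the $\boldsymbol{m}$-sum in \eqref{eq:a1-18}.

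The main obstacle is the choice of $\beta(\boldsymbol{j})$: a single family of coefficients must serve two distinct purposes simultaneously---making the outer $\boldsymbol{j}$-sum on the LHS summable to the precise ratio of infinite products, and, after the change of variable $\boldsymbol{l}=\boldsymbol{m}+\boldsymbol{j}$, making the inner $\boldsymbol{j}$-sum on the RHS collapse to the specific coefficient of $K(q^{\boldsymbol{l}})$ appearing in \eqref{eq:a1-18}. This compatibility is the $A_n$ analogue of the interplay between Rogers' very-well-poised $_6\phi_5$ summation and a terminating Saalsch\"utz-type identity in Liu's original proof of \eqref{eq:10-1}, and verifying it here requires invoking an $A_n$ Bailey-type pair or an $A_n$ terminating summation/transformation formula. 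Since the same choice of $\beta$ governs the parallel derivation of Theorem \ref{th-1} (only the choice of $K$ distinguishes the two), the compatibility check only has to be done once.
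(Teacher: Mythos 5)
Your plan leaves its central step unverified, and that step is where the whole proof would have to live. You never exhibit $\beta(\boldsymbol{j})$, and the natural candidate fails: tying the theorem's $b$ to the parameter $b$ of Theorem \ref{t1} via $b\mapsto\alpha b$ forces the left-hand $\boldsymbol{j}$-series to be very-well-poised in the base $\alpha b$, and already for $n=1$ Rogers' $_6\phi_5$ summation then produces a ratio of the shape $(\alpha bq,\alpha bac/q,\alpha bad/q,\alpha bcd/q)_\infty/(\alpha ba,\alpha bc,\alpha bd,\alpha bacd/q^2)_\infty$; one checks that no choice of the auxiliary parameters $c,d$ reduces this to the required $(\alpha q,\alpha ab/q)_\infty/(\alpha b,\alpha a)_\infty$ (in particular the factor $(\alpha q)_\infty$ cannot be produced without degenerating the series). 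So the coefficients $\beta(\boldsymbol{j})$ forced by your left-hand requirement are not of well-poised type, and the asserted collapse of the inner $\boldsymbol{j}$-sum on the right --- which would need a matching terminating summation for exactly those coefficients --- is not established. Observing that the collapse ``must'' happen because \eqref{eq:a1-18} is true would be circular; as written, the compatibility of the two roles of $\beta(\boldsymbol{j})$ is precisely what is missing.

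The paper's proof shows that none of this machinery is needed. Taking $\beta(\boldsymbol{j})=\prod_{r=1}^{n}\delta_{j_{r},0}$ in \eqref{eq:t1-1} kills every $\boldsymbol{j}$-sum outright (no summation theorem is invoked, and the parameter $b$ of Theorem \ref{t1} disappears entirely), leaving the expansion \eqref{eq:a1-16} of an arbitrary $K(\boldsymbol{y})$. The infinite-product ratio is then carried by $K$ itself,
\[
K(\boldsymbol{y})=f(\alpha qy_{1},\cdots,\alpha qy_{n})\frac{(\alpha q,\alpha by_{1}y_{2}\cdots y_{n})_{\infty}}{(\alpha b,\alpha qy_{1}y_{2}\cdots y_{n})_{\infty}},
\]
so that $K(q^{\boldsymbol{m}})=f(\alpha q^{1+m_{1}},\cdots,\alpha q^{1+m_{n}})\,(\alpha q)_{|\boldsymbol{m}|}/(\alpha b)_{|\boldsymbol{m}|}$, whose factor $(\alpha q)_{|\boldsymbol{m}|}$ cancels the $(aq)_{|\boldsymbol{m}|}$ in the denominator of \eqref{eq:a1-16}; substituting $a\mapsto\alpha$ and $y_{r}\mapsto a_{r}/q$ then yields \eqref{eq:a1-18} immediately. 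To salvage your route you would have to identify explicitly the $A_{n}$ summation pair your $\beta(\boldsymbol{j})$ must satisfy and prove both members, or simply notice that the product ratio can be absorbed into $K$, which is the paper's point.
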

\begin{thm}
\label{th-3} \textup{(Third $A_{n}$ extension of \eqref{eq:10-1})
}Let $f(y)$ be such that the series on both sides are analytic in
$a_{1},a_{2},\cdots,a_{n}$ and converge absolutely for each $a_{r}$
in a disk around the origin; or, $a_{r}=q^{N_{r}},$ for $r=1,2,\cdots,n,$
and so the series terminate. Then
\begin{equation}
\begin{aligned} & f(\alpha a_{1}\cdots a_{n}q^{1-n})\frac{(\alpha q)_{\infty}}{(\alpha a_{1}\cdots a_{n}q^{1-n})_{\infty}}\prod_{r=1}^{n}\frac{(\alpha a_{r}bx_{r}/q)_{\infty}}{(\alpha bx_{r})_{\infty}}\\
 & =\sum_{{k_{r}\geq0\atop r=1,2,\cdots,n}}\prod_{1\leq r<s\leq n}\frac{1-q^{k_{r}-k_{s}}x_{r}/x_{s}}{1-x_{r}/x_{s}}\prod_{r,s=1}^{n}\frac{(qx_{r}/x_{s}a_{s})_{k_{r}}}{(qx_{r}/x_{s})_{k_{r}}}\\
 & \times\frac{(1-\alpha q^{2|\boldsymbol{k}|})(\alpha)_{|\boldsymbol{k}|}}{(1-\alpha)(\alpha a_{1}\cdots a_{n}q^{1-n})_{|\boldsymbol{k}|}}(a_{1}\cdots a_{n})^{|\boldsymbol{k}|}q^{\sum_{r=1}^{n}(r-1)k_{r}-n|\boldsymbol{k}|}\\
 & \;\times\sum_{{0\leq m_{r}\leq k_{r}\atop r=1,2,\cdots,n}}\prod_{1\leq r<s\leq n}\frac{1-q^{m_{r}-m_{s}}x_{r}/x_{s}}{1-x_{r}/x_{s}}\prod_{r,s=1}^{n}\frac{(q^{-k_{s}}x_{r}/x_{s})_{m_{r}}}{(qx_{r}/x_{s})_{m_{r}}}\\
 & \quad\times\frac{(\alpha q^{|\boldsymbol{k}|})_{|\boldsymbol{m}|}}{\prod_{r=1}^{n}(\alpha bx_{r})_{m_{r}}}q^{\sum_{r=1}^{n}rm_{r}}f(\alpha q^{1+|\boldsymbol{m}|}).
\end{aligned}
\label{eq:a1-19}
\end{equation}
\end{thm}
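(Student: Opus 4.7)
The plan is to derive Theorem \ref{th-3} as a direct specialization of the master identity \eqref{eq:t1-1}. First I would set $a=\alpha$ and $y_{s}=a_{s}/q$ for $s=1,\ldots,n$, then choose $\beta(\boldsymbol{j})=\delta_{\boldsymbol{j},\boldsymbol{0}}$ (Kronecker delta), so that every $\boldsymbol{j}$-sum in \eqref{eq:t1-1} collapses to its $\boldsymbol{j}=\boldsymbol{0}$ term. The remaining freedom is the auxiliary function $K(\boldsymbol{y})$, which I would take to be
\[
K(\boldsymbol{y})=f(\alpha q\,y_{1}y_{2}\cdots y_{n})\,\frac{(\alpha q)_{\infty}}{(\alpha q\,y_{1}y_{2}\cdots y_{n})_{\infty}}\prod_{r=1}^{n}\frac{(\alpha b x_{r}y_{r})_{\infty}}{(\alpha b x_{r})_{\infty}}.
\]
This choice parallels the ansatz for Theorem \ref{th-1}, but with the single factor $(\alpha by_{1}\cdots y_{n})_{\infty}/(\alpha b)_{\infty}$ replaced by the \emph{factored} $A_{n}$-type product $\prod_{r}(\alpha bx_{r}y_{r})_{\infty}/(\alpha bx_{r})_{\infty}$, which is the structural feature that distinguishes Theorem \ref{th-3}.

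With these choices, the LHS of \eqref{eq:t1-1} reduces to $K(\boldsymbol{y})$ evaluated at $y_{s}=a_{s}/q$, and using $\alpha q\cdot(a_{1}\cdots a_{n}/q^{n})=\alpha a_{1}\cdots a_{n}q^{1-n}$ together with $\alpha bx_{r}\cdot(a_{r}/q)=\alpha a_{r}bx_{r}/q$ recovers the LHS of \eqref{eq:a1-19} verbatim. On the right-hand side, the inner $\boldsymbol{j}$-sum of \eqref{eq:t1-1} collapses to $\boldsymbol{j}=\boldsymbol{0}$, leaving a double sum in $\boldsymbol{k}$ and $\boldsymbol{m}$ whose outer coefficient already matches \eqref{eq:a1-19} after $y_{s}=a_{s}/q$. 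Applying the identity $(z)_{N}=(z)_{\infty}/(zq^{N})_{\infty}$ to each infinite-product piece of $K$ at $\boldsymbol{y}=q^{\boldsymbol{m}}$ gives
\[
K(q^{\boldsymbol{m}})=f(\alpha q^{1+|\boldsymbol{m}|})\,(\alpha q)_{|\boldsymbol{m}|}\,\prod_{r=1}^{n}\frac{1}{(\alpha bx_{r})_{m_{r}}},
\]
so that the factor $(\alpha q)_{|\boldsymbol{m}|}$ produced by $K$ cancels the denominator $(aq)_{|\boldsymbol{m}|+|\boldsymbol{j}|}$ in \eqref{eq:t1-1} (which reduces to $(\alpha q)_{|\boldsymbol{m}|}$ at $a=\alpha$, $\boldsymbol{j}=\boldsymbol{0}$), producing exactly the denominator $\prod_{r}(\alpha bx_{r})_{m_{r}}$ and the function value $f(\alpha q^{1+|\boldsymbol{m}|})$ demanded by \eqref{eq:a1-19}.

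The main obstacle is identifying the correct ansatz for $K$: because $\beta$ has been trivialized, every $b$-dependent feature of Theorem \ref{th-3}---both the factored infinite product on the LHS and the factored Pochhammer denominator on the RHS---must be absorbed into $K(\boldsymbol{y})$ alone. The above $K$ is essentially forced by the requirement that its $\boldsymbol{y}$-dependence simultaneously telescope into the LHS product $\prod_{r}(\alpha a_{r}bx_{r}/q)_{\infty}/(\alpha bx_{r})_{\infty}$ under $\boldsymbol{y}=\boldsymbol{a}/q$ and into $1/\prod_{r}(\alpha bx_{r})_{m_{r}}$ under $\boldsymbol{y}=q^{\boldsymbol{m}}$. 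Once this $K$ is in hand, the remainder is a routine bookkeeping check of the $q$-powers, Pochhammer symbols, and $A_{n}$ Vandermonde pieces in \eqref{eq:t1-1} under the three specializations $a=\alpha$, $y_{s}=a_{s}/q$, and $\beta=\delta_{\boldsymbol{j},\boldsymbol{0}}$.
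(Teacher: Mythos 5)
Your proposal is correct and follows essentially the same route as the paper: the author likewise specializes \eqref{eq:t1-1} with $\beta(\boldsymbol{j})=\prod_{r}\delta_{j_{r},0}$ to obtain the reduced identity \eqref{eq:a1-16}, then substitutes exactly your choice $K(\boldsymbol{y})=f(\alpha qy_{1}\cdots y_{n})\,(\alpha q)_{\infty}(\alpha qy_{1}\cdots y_{n})_{\infty}^{-1}\prod_{r}(\alpha bx_{r}y_{r})_{\infty}/(\alpha bx_{r})_{\infty}$ and replaces $a\mapsto\alpha$, $y_{r}\mapsto a_{r}/q$. Your evaluation $K(q^{\boldsymbol{m}})=f(\alpha q^{1+|\boldsymbol{m}|})(\alpha q)_{|\boldsymbol{m}|}\prod_{r}(\alpha bx_{r})_{m_{r}}^{-1}$ and the resulting cancellation against $(\alpha q)_{|\boldsymbol{m}|}$ are precisely the bookkeeping the paper leaves implicit.
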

\begin{thm}
\label{th-4} \textup{(Fourth $A_{n}$ extension of \eqref{eq:10-1})
}Let $f(y_{1},y_{2},\cdots,y_{n})$ be such that the series on both
sides are analytic in $a_{1},a_{2},\cdots,a_{n}$ and converge absolutely
for each $a_{r}$ in a disk around the origin; or, $a_{r}=q^{N_{r}},$
for $r=1,2,\cdots,n,$ and so the series terminate. Then
\begin{equation}
\begin{aligned} & f(\alpha a_{1},\cdots,\alpha a_{n})\frac{(\alpha q)_{\infty}}{(\alpha a_{1}a_{2}\cdots a_{n}q^{1-n})_{\infty}}\prod_{r=1}^{n}\frac{(\alpha a_{r}bx_{r}/q)_{\infty}}{(\alpha bx_{r})_{\infty}}\\
 & =\sum_{{k_{r}\geq0\atop r=1,2,\cdots,n}}\prod_{1\leq r<s\leq n}\frac{1-q^{k_{r}-k_{s}}x_{r}/x_{s}}{1-x_{r}/x_{s}}\prod_{r,s=1}^{n}\frac{(qx_{r}/x_{s}a_{s})_{k_{r}}}{(qx_{r}/x_{s})_{k_{r}}}\\
 & \times\frac{(1-\alpha q^{2|\boldsymbol{k}|})(\alpha)_{|\boldsymbol{k}|}}{(1-\alpha)(\alpha a_{1}a_{2}\cdots a_{n}q^{1-n})_{|\boldsymbol{k}|}}(a_{1}a_{2}\cdots a_{n})^{|\boldsymbol{k}|}q^{\sum_{r=1}^{n}(r-1)k_{r}-n|\boldsymbol{k}|}
\end{aligned}
\label{eq:a1-20}
\end{equation}
\begin{align*}
 & \;\times\sum_{{0\leq m_{r}\leq k_{r}\atop r=1,2,\cdots,n}}\prod_{1\leq r<s\leq n}\frac{1-q^{m_{r}-m_{s}}x_{r}/x_{s}}{1-x_{r}/x_{s}}\prod_{r,s=1}^{n}\frac{(q^{-k_{s}}x_{r}/x_{s})_{m_{r}}}{(qx_{r}/x_{s})_{m_{r}}}\\
 & \quad\times\frac{(\alpha q^{|\boldsymbol{k}|})_{|\boldsymbol{m}|}}{\prod_{r=1}^{n}(\alpha bx_{r})_{m_{r}}}q^{\sum_{r=1}^{n}rm_{r}}f(\alpha q^{1+m_{1}},\cdots,\alpha q^{1+m_{n}}).
\end{align*}
\end{thm}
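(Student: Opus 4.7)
The strategy is to deduce Theorem \ref{th-4} directly from the master identity \eqref{eq:t1-1} of Theorem \ref{t1} by an appropriate specialization; structurally, the argument is parallel to the one that gives Theorem \ref{th-3}, but with a multi-variable choice of $K$. Concretely, I would set $a=\alpha$ and $y_r=a_r/q$ for each $r$, so that the factor $(x_r/x_s y_s)_{k_r}$ appearing in \eqref{eq:t1-1} becomes $(qx_r/x_sa_s)_{k_r}$ as required by \eqref{eq:a1-20}. I would then take $K(\boldsymbol{z})=f(\alpha q z_1,\alpha q z_2,\ldots,\alpha q z_n)$, so that the left-hand prefactor $K(\boldsymbol{y})$ becomes $f(\alpha a_1,\ldots,\alpha a_n)$, while on the right-hand side $K(q^{\boldsymbol{m}+\boldsymbol{j}})=f(\alpha q^{1+m_1+j_1},\ldots,\alpha q^{1+m_n+j_n})$.

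The crucial next step is to choose $\beta(\boldsymbol{j})$ as a multi-parameter family depending on $\alpha$, $b$, and the $x_r$'s, designed with two simultaneous roles. On the LHS of \eqref{eq:t1-1}, the $\boldsymbol{j}$-sum should become an $A_n$ very-well-poised series whose closed-form evaluation, by an $A_n$ extension of Rogers' non-terminating $_6\phi_5$ summation (one of the companion results advertised in the abstract), produces exactly the infinite product $\frac{(\alpha q)_\infty}{(\alpha a_1\cdots a_n q^{1-n})_\infty}\prod_{r=1}^n\frac{(\alpha a_rbx_r/q)_\infty}{(\alpha bx_r)_\infty}$. On the RHS of \eqref{eq:t1-1}, the same $\beta$ must cause the inner $\boldsymbol{j}$-sum to simplify so that only the $\boldsymbol{j}=\boldsymbol{0}$ value of $K(q^{\boldsymbol{m}+\boldsymbol{j}})$ effectively contributes, yielding $f(\alpha q^{1+m_1},\ldots,\alpha q^{1+m_n})$, while the interplay of $\beta$ with the factors $(aq)_{|\boldsymbol{m}|+|\boldsymbol{j}|}$, $(bq^{2|\boldsymbol{j}|+1})_{|\boldsymbol{m}|}$ and related denominators of \eqref{eq:t1-1} produces the $\prod_{r=1}^n(\alpha bx_r)_{m_r}$ factor in the denominator of \eqref{eq:a1-20}.

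The principal obstacle is identifying the precise $\beta(\boldsymbol{j})$ that achieves both reductions. A natural route is to first inspect the $n=1$ case, in which \eqref{eq:t1-1} collapses to \eqref{eq:a1-21} and Theorem \ref{th-4} reduces to Liu's \eqref{eq:10-1}, extract from there the $\beta_j$ that accomplishes the splitting of the $b$-parameter, and then lift it to $n\geq 1$ by introducing an $x_r$-dependent product structure. Once $\beta(\boldsymbol{j})$ is pinned down and the companion $A_n$ $_6\phi_5$ summation is verified, the remaining computations reduce to routine bookkeeping with $q$-shifted factorials; the very same $\beta$ and summation simultaneously handle Theorem \ref{th-3} upon making the alternative choice $K(\boldsymbol{z})=f(\alpha qz_1z_2\cdots z_n)$, which justifies the parallel structure across the two statements.
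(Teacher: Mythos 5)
Your overall frame—specialize Theorem \ref{t1} with $a=\alpha$, $y_{r}=a_{r}/q$, and build $K$ from $f(\alpha qz_{1},\dots,\alpha qz_{n})$—is the right starting point, but the central mechanism you propose does not work, and it is not what the paper does. You ask a single nontrivial $\beta(\boldsymbol{j})$ to play two roles at once: on the left of \eqref{eq:t1-1} it should turn the $\boldsymbol{j}$-sum into a very-well-poised series summing to the infinite product, while on the right it should make ``only the $\boldsymbol{j}=\boldsymbol{0}$ value of $K(q^{\boldsymbol{m}+\boldsymbol{j}})$ effectively contribute.'' These two demands are incompatible for a generic analytic $f$: if $\beta(\boldsymbol{j})\neq 0$ for some $\boldsymbol{j}\neq\boldsymbol{0}$, the inner sums on the right of \eqref{eq:t1-1} genuinely involve $f(\alpha q^{1+m_{1}+j_{1}},\dots,\alpha q^{1+m_{n}+j_{n}})$ at shifted arguments, and since $f$ is arbitrary these values cannot be collapsed back to $f(\alpha q^{1+m_{1}},\dots,\alpha q^{1+m_{n}})$ by any identity among $q$-shifted factorials. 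The only way to keep a single evaluation of an arbitrary $f$ on the right, as \eqref{eq:a1-20} requires, is to take $\beta(\boldsymbol{j})=\prod_{r=1}^{n}\delta_{j_{r},0}$—but then the left-hand $\boldsymbol{j}$-sum is trivial and produces no infinite product, so your route to the product prefactor is closed off.

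The missing idea is that the infinite products must be carried by $K$ itself, not generated by summing over $\boldsymbol{j}$. The paper first sets $\beta(\boldsymbol{j})=\prod_{r}\delta_{j_{r},0}$ in \eqref{eq:t1-1} to obtain the clean expansion \eqref{eq:a1-16} of an arbitrary $K(\boldsymbol{y})$, and then chooses
\[
K(\boldsymbol{y})=f(\alpha qy_{1},\cdots,\alpha qy_{n})\,\frac{(\alpha q)_{\infty}}{(\alpha qy_{1}\cdots y_{n})_{\infty}}\prod_{r=1}^{n}\frac{(\alpha bx_{r}y_{r})_{\infty}}{(\alpha bx_{r})_{\infty}},
\]
followed by $a\mapsto\alpha$, $y_{r}\mapsto a_{r}/q$. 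At $\boldsymbol{y}=q^{\boldsymbol{m}}$ the infinite products telescope: $(\alpha q)_{\infty}/(\alpha q^{1+|\boldsymbol{m}|})_{\infty}=(\alpha q)_{|\boldsymbol{m}|}$ cancels the $(\alpha q)_{|\boldsymbol{m}|}$ in the denominator of \eqref{eq:a1-16}, and $(\alpha bx_{r}q^{m_{r}})_{\infty}/(\alpha bx_{r})_{\infty}=(\alpha bx_{r})_{m_{r}}^{-1}$ supplies exactly the $\prod_{r}(\alpha bx_{r})_{m_{r}}$ denominator in \eqref{eq:a1-20}; no companion $A_{n}$ $_{6}\phi_{5}$ summation is needed at this stage. (That summation is a downstream application in Section \ref{sec:4}, derived from these theorems, so invoking it here would also risk circularity.)
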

Two other $A_{n}$ extensions of \eqref{eq:10-1} are given in the
following theorems.
\begin{thm}
\textup{\label{t11-1} (Fifth $A_{n}$ extension of \eqref{eq:10-1})}
Let $f(y)$ be such that the series on both sides are analytic in
$a_{1},a_{2},\cdots,a_{n}$ and converge absolutely for each $a_{r}$
in a disk around the origin; or, $a_{r}=q^{N_{r}},$ for $r=1,2,\cdots,n,$
and so the series terminate. Then
\begin{equation}
\begin{aligned} & f(\alpha a_{1}\cdots a_{n}q^{1-n})\prod_{r=1}^{n}\frac{(\alpha qx_{r},\alpha a_{r}bx_{r}/q)_{\infty}}{(\alpha bx_{r},\alpha a_{r}x_{r})_{\infty}}\\
 & =\sum_{{k_{r}\geq0\atop r=1,\cdots,n}}\prod_{1\leq r<s\leq n}\frac{1-q^{k_{r}-k_{s}}x_{r}/x_{s}}{1-x_{r}/x_{s}}\prod_{r,s=1}^{n}\frac{\left(qx_{r}/x_{s}a_{s}\right)_{k_{r}}}{\left(qx_{r}/x_{s}\right)_{k_{r}}}\\
 & \times\prod_{r=1}^{n}\frac{1-\alpha x_{r}q^{k_{r}+|\boldsymbol{k}|}}{1-\alpha x_{r}}\prod_{r=1}^{n}\frac{\left(\alpha x_{r}\right)_{|\boldsymbol{k}|}}{\left(\alpha x_{r}a_{r}\right)_{|\boldsymbol{k}|}}\left(a_{1}\cdots a_{n}\right)^{|\boldsymbol{k}|}q^{\sum_{r=1}^{n}(r-1)k_{r}-n|\boldsymbol{k}|}\\
 & \times\sum_{{0\leq m_{r}\leq k_{r}\atop r=1,\cdots,n}}\prod_{1\leq r<s\leq n}\frac{1-q^{m_{r}-m_{s}}x_{r}/x_{s}}{1-x_{r}/x_{s}}\prod_{r,s=1}^{n}\frac{\left(q^{-k_{s}}x_{r}/x_{s}\right)_{m_{r}}}{\left(qx_{r}/x_{s}\right)_{m_{r}}}\\
 & \;\times\prod_{r=1}^{n}\frac{\left(\alpha x_{r}q^{|\boldsymbol{k}|}\right)_{m_{r}}}{(\alpha bx_{r})_{m_{r}}}q^{\sum_{r=1}^{n}rm_{r}}f(\alpha q^{1+|\boldsymbol{m}|}).
\end{aligned}
\label{eq:12-1}
\end{equation}
\end{thm}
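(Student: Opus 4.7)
The plan is to mirror the derivation of Theorems \ref{th-1}--\ref{th-4}, starting from the general multiple transformation formula of Theorem \ref{t1}, but to substitute a Milne-type $A_n$ extension of Rogers' $_6\phi_5$ summation at the key summation step in place of the Rogers-type $A_n$ $_6\phi_5$ summation implicit in those proofs. The Milne-type summation is exactly what produces a product of $n$ infinite products indexed by $r$, rather than a single infinite product, on the left-hand side of \eqref{eq:12-1}.

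Concretely, I will specialize $y_r=a_r$ for $r=1,\dots,n$ in Theorem \ref{t1} and choose the sequence $\beta(\boldsymbol{j})$ so that the $\boldsymbol{j}$-dependent part of the left-hand side assembles into a Milne-type very-well-poised $A_n$ $_6\phi_5$ series. The characteristic feature of the Milne setup is that the single very-well-poised factor $(1-bq^{2|\boldsymbol{j}|})/[(1-b)(bqy_1\cdots y_n)_{|\boldsymbol{j}|}]$ appearing in Theorem \ref{t1} is effectively replaced, after the substitution, by a product $\prod_r (1-\alpha x_r q^{j_r+|\boldsymbol{j}|})/[(1-\alpha x_r)(\alpha x_r a_r)_{|\boldsymbol{j}|}]$. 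Summing via Milne's $A_n$ $_6\phi_5$ then yields $\prod_r (\alpha qx_r,\alpha a_r bx_r/q)_\infty/(\alpha bx_r,\alpha a_r x_r)_\infty$, and choosing $K(\boldsymbol{y})$ so that the prefactor $f(\alpha a_1\cdots a_n q^{1-n})$ decouples from this evaluation recovers the left-hand side of \eqref{eq:12-1}.

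On the right-hand side of Theorem \ref{t1}, the same choice of $\beta$ turns the bounded $\boldsymbol{j}$-sum (with $0\leq j_r\leq k_r$) into a terminating Milne-type $A_n$ $_6\phi_5$. Its closed evaluation introduces exactly the product factors $\prod_r (1-\alpha x_r q^{k_r+|\boldsymbol{k}|})(\alpha x_r)_{|\boldsymbol{k}|}/[(1-\alpha x_r)(\alpha x_r a_r)_{|\boldsymbol{k}|}]$ into the $\boldsymbol{k}$-sum, together with the replacement $(\alpha b)_{|\boldsymbol{m}|}\mapsto \prod_r (\alpha bx_r)_{m_r}$ in the $\boldsymbol{m}$-sum. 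The function $K$ chosen in the first step finally turns $K(q^{\boldsymbol{m+j}})$ into $f(\alpha q^{1+|\boldsymbol{m}|})$, matching the innermost sum of \eqref{eq:12-1}.

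The main obstacle is to pinpoint the precise Milne-type $A_n$ $_6\phi_5$ summations---non-terminating for the left-hand side, terminating for the $\boldsymbol{j}$-sum on the right-hand side---and to verify that their balancing and convergence hypotheses are met by the parameters inherited from Theorem \ref{t1}. Once those are in place, the remaining work---tracking the powers of $q$ coming from $\sum_r(r-1)k_r$ and $\sum_r r m_r$, rearranging the Pochhammer factors $\prod_{r,s}(\cdot)_{k_r}$, and checking that the $\boldsymbol{m}$-sum indices align---is careful but routine bookkeeping.
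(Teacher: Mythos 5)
Your instinct that the product-over-$r$ infinite products on the left of \eqref{eq:12-1} signal a ``Milne-type'' very-well-poised structure is correct, but the mechanism you propose cannot deliver it, and this is a genuine gap. In Theorem \ref{t1} the very-well-poised factor of the outer $\boldsymbol{k}$-sum on the right-hand side, namely $\frac{(1-aq^{2|\boldsymbol{k}|})(a)_{|\boldsymbol{k}|}}{(1-a)(aqy_{1}\cdots y_{n})_{|\boldsymbol{k}|}}$, is hard-wired: it depends only on $|\boldsymbol{k}|$ and on the parameters $a$, $y_{r}$, and it is untouched by the free data $\beta(\boldsymbol{j})$ and $K(\boldsymbol{y})$, which enter only the inner $\boldsymbol{j}$- and $\boldsymbol{m}$-sums. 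The target identity \eqref{eq:12-1} instead carries $\prod_{r=1}^{n}\frac{1-\alpha x_{r}q^{k_{r}+|\boldsymbol{k}|}}{1-\alpha x_{r}}\prod_{r=1}^{n}\frac{(\alpha x_{r})_{|\boldsymbol{k}|}}{(\alpha x_{r}a_{r})_{|\boldsymbol{k}|}}$ in the outer sum, which depends on the individual $k_{r}$ and not just on $|\boldsymbol{k}|$. No specialization of $\beta$, $K$, $a$, $b$, $y_{r}$ in \eqref{eq:t1-1} can convert the former into the latter; and since $f$ (i.e.\ $K$) is arbitrary, you cannot hope for an accidental cancellation either. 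A further obstruction to your plan is that on the right of \eqref{eq:t1-1} the $\boldsymbol{j}$-sum and the $\boldsymbol{m}$-sum are coupled ($0\leq m_{r}\leq k_{r}-j_{r}$, with $K(q^{\boldsymbol{m}+\boldsymbol{j}})$ in the innermost sum), so one cannot ``evaluate the $\boldsymbol{j}$-sum in closed form'' independently as you describe.

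The paper's actual route avoids all of this: Theorems \ref{t11-1} and \ref{t11-2} are \emph{not} derived from Theorem \ref{t1} at all, but from a second master expansion formula, \cite[Theorem 2.2]{BR} (displayed as \eqref{eq:2-1}), which already has the Milne-type very-well-poised factors $\prod_{r}\frac{1-ax_{r}q^{k_{r}+|\boldsymbol{k}|}}{1-ax_{r}}\prod_{r}\frac{(ax_{r})_{|\boldsymbol{k}|}}{(aqx_{r}y_{r})_{|\boldsymbol{k}|}}$ built in. One then simply takes $\beta(\boldsymbol{j})=\prod_{r=1}^{n}\delta_{j_{r},0}$ (which collapses the $\boldsymbol{j}$-sums on both sides, so that no $_{6}\phi_{5}$ summation is invoked anywhere), chooses
\[
K(\boldsymbol{y})=f(\alpha qy_{1}\cdots y_{n})\prod_{r=1}^{n}\frac{(\alpha qx_{r},\alpha bx_{r}y_{r})_{\infty}}{(\alpha bx_{r},\alpha qx_{r}y_{r})_{\infty}},
\]
and replaces $a\mapsto\alpha$, $y_{r}\mapsto a_{r}/q$. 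To repair your argument you would need to either import that Bhatnagar--Rai formula (or re-derive it from the Milne-type matrix inversion underlying it), or supply an explicit $A_{n}$ Bailey-type transformation converting the Rogers-type outer sum of \eqref{eq:t1-1} into a Milne-type one --- neither of which is the ``routine bookkeeping'' your proposal defers to.
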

\begin{thm}
\textup{\label{t11-2} (Sixth $A_{n}$ extension of \eqref{eq:10-1})}
Let $f(y_{1},y_{2},\cdots,y_{n})$ be a function of $n$ variables
such that the series on both sides are analytic in $a_{1},a_{2},\cdots,a_{n}$
and converge absolutely for each $a_{r}$ in a disk around the origin;
or, $a_{r}=q^{N_{r}},$ for $r=1,2,\cdots,n,$ and so the series terminate.
Then
\begin{equation}
\begin{aligned} & f(\alpha a_{1},\cdots,\alpha a_{n})\prod_{r=1}^{n}\frac{(\alpha qx_{r},\alpha a_{r}bx_{r}/q)_{\infty}}{(\alpha bx_{r},\alpha x_{r}a_{r})_{\infty}}\\
 & =\sum_{{k_{r}\geq0\atop r=1,\cdots,n}}\prod_{1\leq r<s\leq n}\frac{1-q^{k_{r}-k_{s}}x_{r}/x_{s}}{1-x_{r}/x_{s}}\prod_{r,s=1}^{n}\frac{\left(qx_{r}/x_{s}a_{s}\right)_{k_{r}}}{\left(qx_{r}/x_{s}\right)_{k_{r}}}\\
 & \times\prod_{r=1}^{n}\frac{1-\alpha x_{r}q^{k_{r}+|\boldsymbol{k}|}}{1-\alpha x_{r}}\prod_{r=1}^{n}\frac{\left(\alpha x_{r}\right)_{|\boldsymbol{k}|}}{\left(\alpha x_{r}a_{r}\right)_{|\boldsymbol{k}|}}\left(a_{1}\cdots a_{n}/q^{n}\right)^{|\boldsymbol{k}|}q^{\sum_{r=1}^{n}(r-1)k_{r}}\\
 & \times\sum_{{0\leq m_{r}\leq k_{r}\atop r=1,\cdots,n}}\prod_{1\leq r<s\leq n}\frac{1-q^{m_{r}-m_{s}}x_{r}/x_{s}}{1-x_{r}/x_{s}}\prod_{r,s=1}^{n}\frac{\left(q^{-k_{s}}x_{r}/x_{s}\right)_{m_{r}}}{\left(qx_{r}/x_{s}\right)_{m_{r}}}\\
 & \;\times\prod_{r=1}^{n}\frac{\left(\alpha x_{r}q^{|\boldsymbol{k}|}\right)_{m_{r}}}{(\alpha bx_{r})_{m_{r}}}q^{\sum_{r=1}^{n}rm_{r}}f(\alpha q^{1+m_{1}},\cdots,\alpha q^{1+m_{n}}).
\end{aligned}
\label{eq:12-2}
\end{equation}
\end{thm}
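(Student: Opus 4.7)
The approach is to derive Theorem~\ref{t11-2} as a specialization of the master identity \eqref{eq:t1-1} in Theorem~\ref{t1}, parallel to the specialization that yields Theorem~\ref{t11-1} but keeping the inner function as $f$ of $n$ variables rather than of the single variable $\alpha q^{1+|\boldsymbol{m}|}$. First I would fix a specific choice of the free function $K$ and the free sequence $\beta$ in \eqref{eq:t1-1}: $K(q^{\boldsymbol{m}+\boldsymbol{j}})$ should absorb the factor $f(\alpha q^{1+m_{1}+j_{1}},\ldots,\alpha q^{1+m_{n}+j_{n}})$ together with the $q$-shifted factorials needed to produce, on the right-hand side of \eqref{eq:12-2}, the denominators $\prod_{r=1}^{n}(\alpha bx_{r})_{m_{r}}$ and the factors $\prod_{r=1}^{n}(\alpha x_{r}q^{|\boldsymbol{k}|})_{m_{r}}$; and $\beta(\boldsymbol{j})$ should be chosen so that, after this specialization, the $\boldsymbol{j}$-series on the left-hand side of \eqref{eq:t1-1} is precisely of the type summable by an $A_{n}$ nonterminating Rogers' $_{6}\phi_{5}$ summation of Milne type, namely the one whose very-well-poised factor is $\prod_{r=1}^{n}(1-\alpha x_{r}q^{k_{r}+|\boldsymbol{k}|})/(1-\alpha x_{r})$, rather than the ``single-parameter'' very-well-poised factor $(1-\alpha q^{2|\boldsymbol{k}|})/(1-\alpha)$ used in the proofs of Theorems~\ref{th-3} and \ref{th-4}.

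Second, once the specialization is in place, I would apply that $A_{n}$ Milne-type $_{6}\phi_{5}$ summation to collapse the $\boldsymbol{j}$-series on the left-hand side into the closed product $\prod_{r=1}^{n}(\alpha qx_{r},\alpha a_{r}bx_{r}/q)_{\infty}/(\alpha bx_{r},\alpha x_{r}a_{r})_{\infty}$ together with the value $f(\alpha a_{1},\ldots,\alpha a_{n})$. On the right-hand side, the same choice will consolidate the nested $\boldsymbol{j}$ and $\boldsymbol{m}$ sums of \eqref{eq:t1-1} into the single $\boldsymbol{m}$-sum displayed in \eqref{eq:12-2}: the $(-1)^{|\boldsymbol{j}|}$, $q^{\binom{|\boldsymbol{j}|}{2}}$, and $(b)_{2|\boldsymbol{j}|}$ factors telescope against the chosen $\beta(\boldsymbol{j})$, leaving the clean $\boldsymbol{m}$-weight $q^{\sum_{r=1}^{n}rm_{r}}$ and the factor $\prod_{r=1}^{n}(\alpha x_{r}q^{|\boldsymbol{k}|})_{m_{r}}/(\alpha bx_{r})_{m_{r}}$. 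The remaining check is purely bookkeeping of exponents: matching the power $q^{\sum_{r=1}^{n}(r-1)k_{r}}$ and the monomial $(a_{1}\cdots a_{n}/q^{n})^{|\boldsymbol{k}|}$ on the right-hand side of \eqref{eq:12-2} against the corresponding factors in \eqref{eq:t1-1}.

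The main obstacle is identifying the precise form of $\beta(\boldsymbol{j})$ that simultaneously (i) makes the $\boldsymbol{j}$-series on the left-hand side an instance of the Milne-type $A_{n}$ $_{6}\phi_{5}$ summation, and (ii) makes the $\boldsymbol{j}$-dependence on the right-hand side telescope to exactly the claimed factors; getting both simultaneously requires that the $\beta$-factors dovetail with the very-well-poised kernel on both sides. If the direct specialization runs into a mismatch in a subset of the factors, I would fall back to first deriving Theorem~\ref{t11-2} from an auxiliary master identity obtained by repeating the derivation of Theorem~\ref{t1} with the single-parameter very-well-poised factor replaced by the Milne-type one, and then specialize as in the proof of Theorem~\ref{th-4}; but the direct-specialization route from Theorem~\ref{t1} is expected to succeed and is the cleaner path.
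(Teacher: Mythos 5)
Your primary route has a structural gap: Theorem \ref{t11-2} cannot be obtained by specializing $K$ and $\beta$ in \eqref{eq:t1-1}. The free data $K(\boldsymbol{y})$ and $\beta(\boldsymbol{j})$ enter only the inner sums of \eqref{eq:t1-1}; the outer $\boldsymbol{k}$-kernel is rigidly $\frac{(1-aq^{2|\boldsymbol{k}|})(a)_{|\boldsymbol{k}|}}{(1-a)(aqy_{1}\cdots y_{n})_{|\boldsymbol{k}|}}$, whereas the right-hand side of \eqref{eq:12-2} carries the Milne-type kernel $\prod_{r}\frac{1-\alpha x_{r}q^{k_{r}+|\boldsymbol{k}|}}{1-\alpha x_{r}}\prod_{r}\frac{(\alpha x_{r})_{|\boldsymbol{k}|}}{(\alpha x_{r}a_{r})_{|\boldsymbol{k}|}}$. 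No choice of $K,\beta$ changes the outer kernel, so the two right-hand sides cannot be matched by specialization. The same obstruction appears on the left: the $\boldsymbol{j}$-series in \eqref{eq:t1-1} contains the $\boldsymbol{y}$-dependent denominator $(bqy_{1}\cdots y_{n})_{|\boldsymbol{j}|}$, and since $\beta(\boldsymbol{j})$ must be independent of $\boldsymbol{y}$ (the proof of Theorem \ref{t1} evaluates at $y_{r}=q^{N_{r}}$ and continues analytically), you cannot convert it into the per-$r$ denominators $\prod_{r}(bqx_{r}y_{r})_{|\boldsymbol{j}|}$ that a Milne-type $_{6}\phi_{5}$ requires; hence the purely per-$r$ infinite product $\prod_{r}\frac{(\alpha qx_{r},\alpha a_{r}bx_{r}/q)_{\infty}}{(\alpha bx_{r},\alpha a_{r}x_{r})_{\infty}}$ of \eqref{eq:12-2} is unreachable from the left side of \eqref{eq:t1-1}. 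Turning the single-parameter kernel into the Milne-type one would amount to proving a nontrivial transformation between the two families, not a specialization.

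Your fallback, however, is exactly the correct path and is what the paper does: the ``auxiliary master identity with the Milne-type very-well-poised factor'' already exists as \cite[Theorem 2.2]{BR}, recalled as \eqref{eq:2-1}. The paper then takes the trivial choice $\beta(\boldsymbol{j})=\prod_{r=1}^{n}\delta_{j_{r},0}$ --- so no $_{6}\phi_{5}$ summation is invoked at all; the $\boldsymbol{j}$-sums simply collapse to $\boldsymbol{j}=\boldsymbol{0}$ --- sets $K(\boldsymbol{y})=f(\alpha qy_{1},\cdots,\alpha qy_{n})\prod_{r=1}^{n}\frac{(\alpha qx_{r},\alpha bx_{r}y_{r})_{\infty}}{(\alpha bx_{r},\alpha qx_{r}y_{r})_{\infty}}$, and substitutes $a\mapsto\alpha$, $y_{r}\mapsto a_{r}/q$. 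You should promote the fallback to the main argument and drop the attempted nontrivial-$\beta$ summation step, which both overcomplicates the proof and, as explained above, cannot work from \eqref{eq:t1-1}.
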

The identities in Theorems \ref{th-1}\textendash \ref{t11-2} are
all $A_{n}$ extensions of \eqref{eq:10-1}; we set $x_{r}\mapsto x_{r}/x_{n},a_{1}\mapsto a$
in the $n=1$ case of the identities in Theorems \ref{th-1}\textendash \ref{t11-2}
to obtain \eqref{eq:10-1} readily.

Two groups of multiple expansion formulas for infinite products can
be derived from the identities in Theorems \ref{th-1}\textendash \ref{t11-2}.
We set 
\[
f(x)=\frac{(cx/q,dx/q)_{\infty}}{(\beta x/q,\gamma x/q)_{\infty}},
\]
in \eqref{eq:a1-17}, \eqref{eq:a1-19} and \eqref{eq:12-1}, and
then simplify to obtain the following three multiple expansion formulas
for infinite products.
\begin{thm}
\label{t21-1} Let $\alpha,\beta,\gamma,a_{1},\cdots,a_{n},b,c,d$
be such that $\max\{|\alpha a_{1}\cdots a_{n}q^{1-n}|,|\alpha b|,$\\
$|\alpha c|,|\alpha d|,|\alpha\beta a_{1}\cdots a_{n}/q^{n}|,|\alpha\gamma a_{1}\cdots a_{n}/q^{n}|\}<1$;
or, $a_{r}=q^{N_{r}},$ for $r=1,2,\cdots,n,$ and so the series terminate.
Suppose that none of the denominators in the following identity vanishes,
then 
\[
\begin{aligned} & \frac{(\alpha q,\alpha a_{1}\cdots a_{n}b/q^{n},\alpha a_{1}\cdots a_{n}c/q^{n},\alpha a_{1}\cdots a_{n}d/q^{n},\alpha\beta,\alpha\gamma)_{\infty}}{(\alpha a_{1}\cdots a_{n}q^{1-n},\alpha b,\alpha c,\alpha d,\alpha\beta a_{1}\cdots a_{n}/q^{n},\alpha\gamma a_{1}\cdots a_{n}/q^{n})_{\infty}}\\
 & =\sum_{{k_{r}\geq0\atop r=1,2,\cdots,n}}\prod_{1\leq r<s\leq n}\frac{1-q^{k_{r}-k_{s}}x_{r}/x_{s}}{1-x_{r}/x_{s}}\prod_{r,s=1}^{n}\frac{(qx_{r}/x_{s}a_{s})_{k_{r}}}{(qx_{r}/x_{s})_{k_{r}}}\\
 & \times\frac{(1-\alpha q^{2|\boldsymbol{k}|})(\alpha)_{|\boldsymbol{k}|}}{(1-\alpha)(\alpha a_{1}\cdots a_{n}q^{1-n})_{|\boldsymbol{k}|}}(a_{1}\cdots a_{n})^{|\boldsymbol{k}|}q^{\sum_{r=1}^{n}(r-1)k_{r}-n|\boldsymbol{k}|}\\
 & \;\times\sum_{{0\leq m_{r}\leq k_{r}\atop r=1,2,\cdots,n}}\prod_{1\leq r<s\leq n}\frac{1-q^{m_{r}-m_{s}}x_{r}/x_{s}}{1-x_{r}/x_{s}}\prod_{r,s=1}^{n}\frac{(q^{-k_{s}}x_{r}/x_{s})_{m_{r}}}{(qx_{r}/x_{s})_{m_{r}}}\\
 & \quad\times\frac{(\alpha q^{|\boldsymbol{k}|},\alpha\beta,\alpha\gamma)_{|\boldsymbol{m}|}}{(\alpha b,\alpha c,\alpha d)_{|\boldsymbol{m}|}}q^{\sum_{r=1}^{n}rm_{r}}.
\end{aligned}
\]
\end{thm}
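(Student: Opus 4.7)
The plan is to obtain Theorem \ref{t21-1} as a direct specialization of the $A_{n}$ expansion formula in Theorem \ref{th-1} (identity \eqref{eq:a1-17}), followed by a simple rearrangement of one infinite-product constant. The essential observation is that \eqref{eq:a1-17} holds for an arbitrary analytic function $f$, so one is free to insert a cleverly chosen ratio of infinite products whose $q$-shifted structure produces the Pochhammer ratios appearing in the target summand.

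Concretely, I would set
\[
f(x)=\frac{(cx/q,dx/q)_{\infty}}{(\beta x/q,\gamma x/q)_{\infty}}
\]
in \eqref{eq:a1-17}. This $f$ is analytic in a neighbourhood of $x=0$ (the $q$-shifted factorials in the denominator do not vanish there), so the analyticity hypothesis of Theorem \ref{th-1} is satisfied; the convergence condition $|a_{1}\cdots a_{n}/q^{n}|<1$ ensures the absolute convergence of the resulting double sum on the right, while the terminating case $a_{r}=q^{N_{r}}$ is automatic. Evaluating $f$ at $x=\alpha a_{1}\cdots a_{n}q^{1-n}$ on the left produces the four infinite products $(\alpha a_{1}\cdots a_{n}c/q^{n},\alpha a_{1}\cdots a_{n}d/q^{n})_{\infty}$ in the numerator and $(\alpha\beta a_{1}\cdots a_{n}/q^{n},\alpha\gamma a_{1}\cdots a_{n}/q^{n})_{\infty}$ in the denominator; combining with the factor $(\alpha q,\alpha a_{1}\cdots a_{n}b/q^{n})_{\infty}/(\alpha a_{1}\cdots a_{n}q^{1-n},\alpha b)_{\infty}$ already present in \eqref{eq:a1-17} yields six of the eight products in the claimed left-hand side.

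On the right-hand side, the inner sum contains $f(\alpha q^{1+|\boldsymbol{m}|})$. Applying the identity $(wq^{k})_{\infty}=(w)_{\infty}/(w)_{k}$ with $w\in\{\alpha c,\alpha d,\alpha\beta,\alpha\gamma\}$ and $k=|\boldsymbol{m}|$ splits this into a constant factor $(\alpha c,\alpha d)_{\infty}/(\alpha\beta,\alpha\gamma)_{\infty}$ (independent of the summation indices $\boldsymbol{k},\boldsymbol{m}$) times the finite Pochhammer ratio $(\alpha\beta,\alpha\gamma)_{|\boldsymbol{m}|}/(\alpha c,\alpha d)_{|\boldsymbol{m}|}$. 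The finite ratio combines with the existing factor $(\alpha q^{|\boldsymbol{k}|})_{|\boldsymbol{m}|}/(\alpha b)_{|\boldsymbol{m}|}$ in \eqref{eq:a1-17} to give exactly the summand $(\alpha q^{|\boldsymbol{k}|},\alpha\beta,\alpha\gamma)_{|\boldsymbol{m}|}/(\alpha b,\alpha c,\alpha d)_{|\boldsymbol{m}|}$ of Theorem \ref{t21-1}.

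Finally, pulling the constant factor $(\alpha c,\alpha d)_{\infty}/(\alpha\beta,\alpha\gamma)_{\infty}$ from the right-hand side over to the left introduces the two remaining products $(\alpha\beta,\alpha\gamma)_{\infty}$ in the numerator and $(\alpha c,\alpha d)_{\infty}$ in the denominator, completing the eight-product ratio in Theorem \ref{t21-1}. Since Theorem \ref{th-1} carries all the combinatorial content of the argument, there is no substantive obstacle; the work is purely the careful bookkeeping of $q$-shifted factorials and a check that the chosen $f$ satisfies the analyticity hypothesis.
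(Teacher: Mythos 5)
Your proposal is correct and is exactly the paper's own derivation: the author states that Theorem \ref{t21-1} is obtained by setting $f(x)=(cx/q,dx/q)_{\infty}/(\beta x/q,\gamma x/q)_{\infty}$ in \eqref{eq:a1-17} and simplifying, which is precisely your substitution, use of $(wq^{k})_{\infty}=(w)_{\infty}/(w)_{k}$, and transfer of the constant $(\alpha c,\alpha d)_{\infty}/(\alpha\beta,\alpha\gamma)_{\infty}$ to the left-hand side.
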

\begin{thm}
Let $\alpha,\beta,\gamma,a_{1},\cdots,a_{n},b,c,d,x_{1},\cdots,x_{n}$
be such that $\max\{|\alpha c|,|\alpha d|,$\\
$|\alpha a_{1}\cdots a_{n}q^{1-n}|,|\alpha\beta a_{1}\cdots a_{n}/q^{n}|,|\alpha\gamma a_{1}\cdots a_{n}/q^{n}|\}<1$
and $\max\{|\alpha bx_{r}|:1\leq r\leq n\}<1$; or, $a_{r}=q^{N_{r}},$
for $r=1,2,\cdots,n,$ and so the series terminate. Suppose that none
of the denominators in the following identity vanishes, then
\[
\begin{aligned} & \frac{(\alpha q,\alpha a_{1}\cdots a_{n}c/q^{n},\alpha a_{1}\cdots a_{n}d/q^{n},\alpha\beta,\alpha\gamma)_{\infty}}{(\alpha a_{1}\cdots a_{n}q^{1-n},\alpha c,\alpha d,\alpha a_{1}\cdots a_{n}\beta/q^{n},\alpha a_{1}\cdots a_{n}\gamma/q^{n})_{\infty}}\prod_{r=1}^{n}\frac{(\alpha a_{r}bx_{r}/q)_{\infty}}{(\alpha bx_{r})_{\infty}}\\
 & =\sum_{{k_{r}\geq0\atop r=1,2,\cdots,n}}\prod_{1\leq r<s\leq n}\frac{1-q^{k_{r}-k_{s}}x_{r}/x_{s}}{1-x_{r}/x_{s}}\prod_{r,s=1}^{n}\frac{(qx_{r}/x_{s}a_{s})_{k_{r}}}{(qx_{r}/x_{s})_{k_{r}}}\\
 & \times\frac{(1-\alpha q^{2|\boldsymbol{k}|})(\alpha)_{|\boldsymbol{k}|}}{(1-\alpha)(\alpha a_{1}\cdots a_{n}q^{1-n})_{|\boldsymbol{k}|}}(a_{1}\cdots a_{n})^{|\boldsymbol{k}|}q^{\sum_{r=1}^{n}(r-1)k_{r}-n|\boldsymbol{k}|}\\
 & \;\times\sum_{{0\leq m_{r}\leq k_{r}\atop r=1,2,\cdots,n}}\prod_{1\leq r<s\leq n}\frac{1-q^{m_{r}-m_{s}}x_{r}/x_{s}}{1-x_{r}/x_{s}}\prod_{r,s=1}^{n}\frac{(q^{-k_{s}}x_{r}/x_{s})_{m_{r}}}{(qx_{r}/x_{s})_{m_{r}}}\\
 & \quad\times\frac{(\alpha q^{|\boldsymbol{k}|},\alpha\beta,\alpha\gamma)_{|\boldsymbol{m}|}}{(\alpha c,\alpha d)_{|\boldsymbol{m}|}\prod_{r=1}^{n}(\alpha bx_{r})_{m_{r}}}q^{\sum_{r=1}^{n}rm_{r}}.
\end{aligned}
\]
\end{thm}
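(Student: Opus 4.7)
The plan is to derive this theorem as a direct specialization of Theorem~\ref{th-3}, exactly as announced before Theorem~\ref{t21-1}: I substitute
\[
f(x)=\frac{(cx/q,dx/q)_{\infty}}{(\beta x/q,\gamma x/q)_{\infty}}
\]
into identity \eqref{eq:a1-19} and then simplify. Since $f$ is analytic at $x=0$ and meromorphic elsewhere, the analyticity/termination hypothesis of Theorem~\ref{th-3} is satisfied whenever none of the denominators listed in the statement vanishes; absolute convergence of the resulting multiple sum under $|a_{1}\cdots a_{n}/q^{n}|<1$ is inherited from \eqref{eq:a1-19}.

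The first step is to evaluate $f$ at the two arguments in which it occurs. On the left-hand side of \eqref{eq:a1-19} a direct substitution gives
\[
f(\alpha a_{1}\cdots a_{n}q^{1-n})=\frac{(\alpha a_{1}\cdots a_{n}c/q^{n},\alpha a_{1}\cdots a_{n}d/q^{n})_{\infty}}{(\alpha a_{1}\cdots a_{n}\beta/q^{n},\alpha a_{1}\cdots a_{n}\gamma/q^{n})_{\infty}},
\]
which, together with the remaining infinite-product factors of \eqref{eq:a1-19}, already reproduces all of the infinite-product factors appearing on the target left-hand side up to one missing constant ratio, to be generated from the right-hand side below. On the right-hand side of \eqref{eq:a1-19}, applying the elementary identity $(zq^{N})_{\infty}=(z)_{\infty}/(z)_{N}$ with $N=|\boldsymbol{m}|$ to each of $z\in\{\alpha c,\alpha d,\alpha\beta,\alpha\gamma\}$ yields
\[
f(\alpha q^{1+|\boldsymbol{m}|})=\frac{(\alpha c,\alpha d)_{\infty}}{(\alpha\beta,\alpha\gamma)_{\infty}}\cdot\frac{(\alpha\beta,\alpha\gamma)_{|\boldsymbol{m}|}}{(\alpha c,\alpha d)_{|\boldsymbol{m}|}}.
\]

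The second step is to reorganize the resulting identity. The prefactor $(\alpha c,\alpha d)_{\infty}/(\alpha\beta,\alpha\gamma)_{\infty}$ is independent of $\boldsymbol{k}$ and $\boldsymbol{m}$, so I pull it outside the double sum and transport its reciprocal to the left-hand side, thereby supplying precisely the missing factor $(\alpha\beta,\alpha\gamma)_{\infty}/(\alpha c,\alpha d)_{\infty}$ needed to match the target. The remaining factor $(\alpha\beta,\alpha\gamma)_{|\boldsymbol{m}|}/(\alpha c,\alpha d)_{|\boldsymbol{m}|}$ is absorbed into the inner sum, combining with the existing $(\alpha q^{|\boldsymbol{k}|})_{|\boldsymbol{m}|}/\prod_{r=1}^{n}(\alpha bx_{r})_{m_{r}}$ from \eqref{eq:a1-19} to produce exactly the summand $(\alpha q^{|\boldsymbol{k}|},\alpha\beta,\alpha\gamma)_{|\boldsymbol{m}|}/[(\alpha c,\alpha d)_{|\boldsymbol{m}|}\prod_{r=1}^{n}(\alpha bx_{r})_{m_{r}}]$ of the target identity.

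There is no genuine obstacle here: once the specialization of $f$ is fixed, every step is mechanical cancellation between infinite products and $q$-shifted factorials. The only point that requires care is the bookkeeping of which factors are absorbed into $(\cdot)_{|\boldsymbol{m}|}$ symbols and which remain as infinite products, so that the constant prefactor arising from $f(\alpha q^{1+|\boldsymbol{m}|})$ is transferred to the opposite side of the equation correctly and no spurious factors survive.
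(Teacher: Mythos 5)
Your proposal is correct and coincides with the paper's own derivation: the paper obtains this identity precisely by substituting $f(x)=(cx/q,dx/q)_{\infty}/(\beta x/q,\gamma x/q)_{\infty}$ into \eqref{eq:a1-19} and simplifying, and your evaluation of $f(\alpha a_{1}\cdots a_{n}q^{1-n})$ and $f(\alpha q^{1+|\boldsymbol{m}|})$, together with the transfer of the constant $(\alpha c,\alpha d)_{\infty}/(\alpha\beta,\alpha\gamma)_{\infty}$ to the left-hand side, is exactly the intended bookkeeping.
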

\begin{thm}
\label{t11-3} Let $\alpha,\beta,\gamma,a_{1},\cdots,a_{n},b,c,d,x_{1},\cdots,x_{n}$
be such that $\max\{|\alpha c|,|\alpha d|,$\\
$|\alpha\beta a_{1}\cdots a_{n}/q^{n}|,|\alpha\gamma a_{1}\cdots a_{n}/q^{n}|\}<1$
and $\max\{|\alpha ax_{r}|,|\alpha bx_{r}|:1\leq r\leq n\}<1$; or,
$a_{r}=q^{N_{r}},$ for $r=1,2,\cdots,n,$ and so the series terminate.
Suppose that none of the denominators in the following identity vanishes,
then
\begin{equation}
\begin{aligned} & \frac{(\alpha\beta,\alpha\gamma,\alpha a_{1}\cdots a_{n}cq^{-n},\alpha a_{1}\cdots a_{n}dq^{-n})_{\infty}}{(\alpha c,\alpha d,\alpha\beta a_{1}\cdots a_{n}q^{-n},\alpha\gamma a_{1}\cdots a_{n}q^{-n})_{\infty}}\prod_{r=1}^{n}\frac{(\alpha qx_{r},\alpha a_{r}bx_{r}/q)_{\infty}}{(\alpha bx_{r},\alpha a_{r}x_{r})_{\infty}}\\
 & =\sum_{{k_{r}\geq0\atop r=1,\cdots,n}}\prod_{1\leq r<s\leq n}\frac{1-q^{k_{r}-k_{s}}x_{r}/x_{s}}{1-x_{r}/x_{s}}\prod_{r,s=1}^{n}\frac{\left(qx_{r}/x_{s}a_{s}\right)_{k_{r}}}{\left(qx_{r}/x_{s}\right)_{k_{r}}}\\
 & \times\prod_{r=1}^{n}\frac{1-\alpha x_{r}q^{k_{r}+|\boldsymbol{k}|}}{1-\alpha x_{r}}\prod_{r=1}^{n}\frac{\left(\alpha x_{r}\right)_{|\boldsymbol{k}|}}{\left(\alpha x_{r}a_{r}\right)_{|\boldsymbol{k}|}}\left(a_{1}\cdots a_{n}\right)^{|\boldsymbol{k}|}q^{\sum_{r=1}^{n}(r-1)k_{r}-n|\boldsymbol{k}|}\\
 & \times\sum_{{0\leq m_{r}\leq k_{r}\atop r=1,\cdots,n}}\prod_{1\leq r<s\leq n}\frac{1-q^{m_{r}-m_{s}}x_{r}/x_{s}}{1-x_{r}/x_{s}}\prod_{r,s=1}^{n}\frac{\left(q^{-k_{s}}x_{r}/x_{s}\right)_{m_{r}}}{\left(qx_{r}/x_{s}\right)_{m_{r}}}\\
 & \;\times\prod_{r=1}^{n}\frac{\left(\alpha x_{r}q^{|\boldsymbol{k}|}\right)_{m_{r}}}{(\alpha bx_{r})_{m_{r}}}\frac{(\alpha\beta,\alpha\gamma)_{|\boldsymbol{m}|}}{(\alpha c,\alpha d)_{|\boldsymbol{m}|}}q^{\sum_{r=1}^{n}rm_{r}}.
\end{aligned}
\label{eq:12-3}
\end{equation}
\end{thm}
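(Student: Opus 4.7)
The plan is to derive \eqref{eq:12-3} directly from Theorem \ref{t11-1} by the specialization
\[
f(x)=\frac{(cx/q,dx/q)_{\infty}}{(\beta x/q,\gamma x/q)_{\infty}}
\]
suggested in the discussion preceding the theorem. Under this choice, $f$ is analytic in each $a_{r}$ near the origin, since $x=0$ is a regular point of $f$ (each factor $(0;q)_{\infty}=1$); together with the standing assumption that none of the denominators vanish, this places us squarely in the hypothesis of Theorem \ref{t11-1}, and the terminating case $a_{r}=q^{N_{r}}$ requires no separate argument.

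The first step is to compute the two evaluations of $f$ that appear in \eqref{eq:12-1}. On the left-hand side we need $f(\alpha a_{1}\cdots a_{n}q^{1-n})$, which immediately produces
\[
\frac{(\alpha a_{1}\cdots a_{n}cq^{-n},\alpha a_{1}\cdots a_{n}dq^{-n})_{\infty}}{(\alpha a_{1}\cdots a_{n}\beta q^{-n},\alpha a_{1}\cdots a_{n}\gamma q^{-n})_{\infty}},
\]
combining with the factor $\prod_{r=1}^{n}(\alpha qx_{r},\alpha a_{r}bx_{r}/q)_{\infty}/(\alpha bx_{r},\alpha a_{r}x_{r})_{\infty}$ already on that side of \eqref{eq:12-1}. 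On the right-hand side, the summand involves $f(\alpha q^{1+|\boldsymbol{m}|})$. Applying the basic identity $(zq^{N})_{\infty}=(z)_{\infty}/(z)_{N}$ to each of the four $q$-shifted factorials yields the clean factorization
\[
f(\alpha q^{1+|\boldsymbol{m}|})=\frac{(\alpha c,\alpha d)_{\infty}}{(\alpha\beta,\alpha\gamma)_{\infty}}\cdot\frac{(\alpha\beta,\alpha\gamma)_{|\boldsymbol{m}|}}{(\alpha c,\alpha d)_{|\boldsymbol{m}|}}.
\]

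The summation-independent prefactor $(\alpha c,\alpha d)_{\infty}/(\alpha\beta,\alpha\gamma)_{\infty}$ is pulled out of the double sum and transported across the equation, where its reciprocal fuses with the evaluation of $f(\alpha a_{1}\cdots a_{n}q^{1-n})$ to produce exactly the infinite-product prefactor displayed on the left of \eqref{eq:12-3}. The surviving $|\boldsymbol{m}|$-dependent quotient $(\alpha\beta,\alpha\gamma)_{|\boldsymbol{m}|}/(\alpha c,\alpha d)_{|\boldsymbol{m}|}$ stays inside the inner $\boldsymbol{m}$-sum and reproduces the extra ratio of Pochhammer symbols in the right-hand side of \eqref{eq:12-3}; every other factor in \eqref{eq:12-3} is copied verbatim from \eqref{eq:12-1}. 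The entire derivation is thus a bookkeeping specialization; the only point requiring care is the transport of the constant $q$-Pochhammer prefactor across the identity and the correct pairing of $(\alpha\beta,\alpha\gamma)$ versus $(\alpha c,\alpha d)$ on each side, so I do not anticipate any genuine obstacle beyond maintaining consistency of the four factorials.
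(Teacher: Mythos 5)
Your proposal is correct and is exactly the paper's route: the paper obtains \eqref{eq:12-3} by substituting $f(x)=(cx/q,dx/q)_{\infty}/(\beta x/q,\gamma x/q)_{\infty}$ into \eqref{eq:12-1} of Theorem \ref{t11-1} and simplifying, and your evaluations $f(\alpha a_{1}\cdots a_{n}q^{1-n})$ and $f(\alpha q^{1+|\boldsymbol{m}|})=\frac{(\alpha c,\alpha d)_{\infty}}{(\alpha\beta,\alpha\gamma)_{\infty}}\cdot\frac{(\alpha\beta,\alpha\gamma)_{|\boldsymbol{m}|}}{(\alpha c,\alpha d)_{|\boldsymbol{m}|}}$, together with transporting the constant prefactor to the left side, reproduce the stated identity.
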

The second group can be obtained as follows. We now take

\[
f(y_{1},y_{2},\cdots,y_{n})=\prod_{r=1}^{n}\frac{(cx_{r}y_{r}/q,dx_{r}y_{r}/q)_{\infty}}{(\beta x_{r}y_{r}/q,\gamma x_{r}y_{r}/q)_{\infty}}
\]
in \eqref{eq:a1-18}, \eqref{eq:a1-20} and \eqref{eq:12-2}, and
then simplify to give three other multiple expansion formulas for
infinite products.
\begin{thm}
Let $\alpha,\beta,\gamma,a_{1},a_{2},\cdots,a_{n},b,c,d,x_{1},\cdots,x_{n}$
be such that $\max\{|\alpha b|,$\\
$|\alpha a_{1}\cdots a_{n}q^{1-n}|\}<1$ and $\max\{|\alpha cx_{r}|,|\alpha dx_{r}|,|\alpha\beta a_{r}x_{r}/q|,|\alpha\gamma a_{r}x_{r}/q|:1\leq r\leq n\}<1$;
or, $a_{r}=q^{N_{r}},$ for $r=1,2,\cdots,n,$ and so the series terminate.
Suppose that none of the denominators in the following identity vanishes,
then
\end{thm}
\[
\begin{aligned} & \frac{(\alpha q,\alpha a_{1}\cdots a_{n}b/q^{n})_{\infty}}{(\alpha b,\alpha a_{1}\cdots a_{n}q^{1-n})_{\infty}}\prod_{r=1}^{n}\frac{(\alpha a_{r}cx_{r}/q,\alpha a_{r}dx_{r}/q,\alpha\beta x_{r},\alpha\gamma x_{r})_{\infty}}{(\alpha cx_{r},\alpha dx_{r},\alpha\beta a_{r}x_{r}/q,\alpha\gamma a_{r}x_{r}/q)_{\infty}}\\
 & =\sum_{{k_{r}\geq0\atop r=1,2,\cdots,n}}\prod_{1\leq r<s\leq n}\frac{1-q^{k_{r}-k_{s}}x_{r}/x_{s}}{1-x_{r}/x_{s}}\prod_{r,s=1}^{n}\frac{(qx_{r}/x_{s}a_{s})_{k_{r}}}{(qx_{r}/x_{s})_{k_{r}}}\\
 & \times\frac{(1-\alpha q^{2|\boldsymbol{k}|})(\alpha)_{|\boldsymbol{k}|}}{(1-\alpha)(\alpha a_{1}\cdots a_{n}q^{1-n})_{|\boldsymbol{k}|}}(a_{1}\cdots a_{n})^{|\boldsymbol{k}|}q^{\sum_{r=1}^{n}(r-1)k_{r}-n|\boldsymbol{k}|}\\
 & \;\times\sum_{{0\leq m_{r}\leq k_{r}\atop r=1,2,\cdots,n}}\prod_{1\leq r<s\leq n}\frac{1-q^{m_{r}-m_{s}}x_{r}/x_{s}}{1-x_{r}/x_{s}}\prod_{r,s=1}^{n}\frac{(q^{-k_{s}}x_{r}/x_{s})_{m_{r}}}{(qx_{r}/x_{s})_{m_{r}}}\\
 & \quad\times\frac{(\alpha q^{|\boldsymbol{k}|})_{|\boldsymbol{m}|}}{(\alpha b)_{|\boldsymbol{m}|}}\prod_{r=1}^{n}\frac{(\alpha\beta x_{r},\alpha\gamma x_{r})_{m_{r}}}{(\alpha cx_{r},\alpha dx_{r})_{m_{r}}}q^{\sum_{r=1}^{n}rm_{r}}.
\end{aligned}
\]

\begin{thm}
Let $\alpha,\beta,\gamma,a_{1},\cdots,a_{n},b,c,d,x_{1},\cdots,x_{n}$
be such that $\max\{|\alpha a_{1}\cdots a_{n}q^{1-n}|\}$\\
$<1$ and $\max\{|\alpha bx_{r}|,|\alpha cx_{r}|,|\alpha dx_{r}|,|\alpha\beta a_{r}x_{r}/q|,|\alpha\gamma a_{r}x_{r}/q|:1\leq r\leq n\}<1$;
or, $a_{r}=q^{N_{r}},$ for $r=1,2,\cdots,n,$ and so the series terminate.
Suppose that none of the denominators in the following identity vanishes,
then\\
\[
\begin{aligned} & \frac{(\alpha q)_{\infty}}{(\alpha a_{1}a_{2}\cdots a_{n}q^{1-n})_{\infty}}\prod_{r=1}^{n}\frac{(\alpha a_{r}bx_{r}/q,\alpha a_{r}cx_{r}/q,\alpha a_{r}dx_{r}/q,\alpha\beta x_{r},\alpha\gamma x_{r})_{\infty}}{(\alpha bx_{r},\alpha cx_{r},\alpha dx_{r},\alpha\beta a_{r}x_{r}/q,\alpha\gamma a_{r}x_{r}/q)_{\infty}}\\
 & =\sum_{{k_{r}\geq0\atop r=1,2,\cdots,n}}\prod_{1\leq r<s\leq n}\frac{1-q^{k_{r}-k_{s}}x_{r}/x_{s}}{1-x_{r}/x_{s}}\prod_{r,s=1}^{n}\frac{(qx_{r}/x_{s}a_{s})_{k_{r}}}{(qx_{r}/x_{s})_{k_{r}}}\\
 & \times\frac{(1-\alpha q^{2|\boldsymbol{k}|})(\alpha)_{|\boldsymbol{k}|}}{(1-\alpha)(\alpha a_{1}a_{2}\cdots a_{n}q^{1-n})_{|\boldsymbol{k}|}}(a_{1}a_{2}\cdots a_{n})^{|\boldsymbol{k}|}q^{\sum_{r=1}^{n}(r-1)k_{r}-n|\boldsymbol{k}|}\\
 & \;\times\sum_{{0\leq m_{r}\leq k_{r}\atop r=1,2,\cdots,n}}\prod_{1\leq r<s\leq n}\frac{1-q^{m_{r}-m_{s}}x_{r}/x_{s}}{1-x_{r}/x_{s}}\prod_{r,s=1}^{n}\frac{(q^{-k_{s}}x_{r}/x_{s})_{m_{r}}}{(qx_{r}/x_{s})_{m_{r}}}\\
 & \quad\times(\alpha q^{|\boldsymbol{k}|})_{|\boldsymbol{m}|}\prod_{r=1}^{n}\frac{(\alpha\beta x_{r},\alpha\gamma x_{r})_{m_{r}}}{(\alpha bx_{r},\alpha cx_{r},\alpha dx_{r})_{m_{r}}}q^{\sum_{r=1}^{n}rm_{r}}.
\end{aligned}
\]
\end{thm}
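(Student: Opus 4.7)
The plan is to specialize Theorem \ref{th-4} (formula \eqref{eq:a1-20}) by inserting the test function
\[
f(y_{1},\ldots,y_{n})=\prod_{r=1}^{n}\frac{(cx_{r}y_{r}/q,dx_{r}y_{r}/q)_{\infty}}{(\beta x_{r}y_{r}/q,\gamma x_{r}y_{r}/q)_{\infty}}.
\]
As long as $|a_{1}\cdots a_{n}/q^{n}|<1$ (or the series terminates because each $a_{r}=q^{N_{r}}$) and no denominator in the target identity vanishes, this $f$ is analytic in each $y_{r}$ in a neighborhood of the origin and the absolute-convergence hypothesis of Theorem \ref{th-4} is satisfied, so the substitution is legitimate.

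Next, I would reduce the two evaluations of $f$ that appear in \eqref{eq:a1-20}. On the left-hand side, a direct substitution gives
\[
f(\alpha a_{1},\ldots,\alpha a_{n})=\prod_{r=1}^{n}\frac{(\alpha a_{r}cx_{r}/q,\alpha a_{r}dx_{r}/q)_{\infty}}{(\alpha\beta a_{r}x_{r}/q,\alpha\gamma a_{r}x_{r}/q)_{\infty}},
\]
which merges with the prefactor already present in \eqref{eq:a1-20} and produces all the numerator entries $\alpha a_{r}cx_{r}/q,\alpha a_{r}dx_{r}/q$ and denominator entries $\alpha\beta a_{r}x_{r}/q,\alpha\gamma a_{r}x_{r}/q$ expected on the LHS of the target identity. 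On the right-hand side, the inner sum contains $f(\alpha q^{1+m_{1}},\ldots,\alpha q^{1+m_{n}})$; using the standard relation $(A)_{\infty}=(A)_{m}(Aq^{m})_{\infty}$ to peel off the first $m_{r}$ factors separates this quantity into a factor independent of the summation indices and a factor depending on $\boldsymbol{m}$,
\[
f(\alpha q^{1+m_{1}},\ldots,\alpha q^{1+m_{n}})=\prod_{r=1}^{n}\frac{(\alpha cx_{r},\alpha dx_{r})_{\infty}}{(\alpha\beta x_{r},\alpha\gamma x_{r})_{\infty}}\cdot\prod_{r=1}^{n}\frac{(\alpha\beta x_{r},\alpha\gamma x_{r})_{m_{r}}}{(\alpha cx_{r},\alpha dx_{r})_{m_{r}}}.
\]

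Finally, the $\boldsymbol{m}$-independent product above factors out of the whole triple/double sum on the right and is carried across to the LHS as its reciprocal, producing precisely the remaining numerator entries $\alpha\beta x_{r},\alpha\gamma x_{r}$ and denominator entries $\alpha cx_{r},\alpha dx_{r}$ in the target LHS. The surviving $\boldsymbol{m}$-dependent ratio combines with the factor $1/\prod_{r}(\alpha bx_{r})_{m_{r}}$ already present in the inner sum of \eqref{eq:a1-20} to yield exactly $\prod_{r}\frac{(\alpha\beta x_{r},\alpha\gamma x_{r})_{m_{r}}}{(\alpha bx_{r},\alpha cx_{r},\alpha dx_{r})_{m_{r}}}$, matching the target RHS term by term. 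There is essentially no conceptual obstacle: the argument is a careful bookkeeping exercise, the only subtle point being that the absolute convergence guaranteed by the hypothesis on $|a_{1}\cdots a_{n}/q^{n}|$ (or termination in the other case) must be invoked to justify pulling the constant infinite product out of the summation, but this is completely standard.
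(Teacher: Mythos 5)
Your proof is correct and is exactly the route the paper takes: it obtains this theorem by substituting $f(y_{1},\ldots,y_{n})=\prod_{r=1}^{n}\frac{(cx_{r}y_{r}/q,\,dx_{r}y_{r}/q)_{\infty}}{(\beta x_{r}y_{r}/q,\,\gamma x_{r}y_{r}/q)_{\infty}}$ into \eqref{eq:a1-20} and simplifying with $(A)_{\infty}=(A)_{m}(Aq^{m})_{\infty}$, just as you do. Your bookkeeping of the two evaluations of $f$ and of the constant infinite product moved to the left-hand side matches the paper's (unstated) computation term by term.
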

\begin{thm}
\textup{\label{t11-4}} Let $\alpha,\beta,\gamma,a_{1},\cdots,a_{n},b,c,d,x_{1},\cdots,x_{n}$
be such that $\max\{|\alpha ax_{r}|,$\\
$|\alpha bx_{r}|,|\alpha cx_{r}|,|\alpha dx_{r}|,|\alpha\beta a_{r}x_{r}/q|,|\alpha\gamma a_{r}x_{r}/q|:1\leq r\leq n\}<1$;
or, $a_{r}=q^{N_{r}},$ for $r=1,2,\cdots,n,$ and so the series terminate.
Suppose that none of the denominators in the following identity vanishes,
then

\begin{equation}
\begin{aligned} & \prod_{r=1}^{n}\frac{(\alpha qx_{r},\alpha a_{r}bx_{r}/q,\alpha a_{r}cx_{r}/q,\alpha a_{r}dx_{r}/q,\alpha\beta x_{r},\alpha\gamma x_{r})_{\infty}}{(\alpha a_{r}x_{r},\alpha bx_{r},\alpha cx_{r},\alpha dx_{r},\alpha\beta a_{r}x_{r}/q,\alpha\gamma a_{r}x_{r}/q)_{\infty}}\\
 & =\sum_{{k_{r}\geq0\atop r=1,\cdots,n}}\prod_{1\leq r<s\leq n}\frac{1-q^{k_{r}-k_{s}}x_{r}/x_{s}}{1-x_{r}/x_{s}}\prod_{r,s=1}^{n}\frac{\left(qx_{r}/x_{s}a_{s}\right)_{k_{r}}}{\left(qx_{r}/x_{s}\right)_{k_{r}}}\\
 & \times\prod_{r=1}^{n}\frac{1-\alpha x_{r}q^{k_{r}+|\boldsymbol{k}|}}{1-\alpha x_{r}}\prod_{r=1}^{n}\frac{\left(\alpha x_{r}\right)_{|\boldsymbol{k}|}}{\left(\alpha x_{r}a_{r}\right)_{|\boldsymbol{k}|}}\left(a_{1}\cdots a_{n}/q^{n}\right)^{|\boldsymbol{k}|}q^{\sum_{r=1}^{n}(r-1)k_{r}}\\
 & \times\sum_{{0\leq m_{r}\leq k_{r}\atop r=1,\cdots,n}}\prod_{1\leq r<s\leq n}\frac{1-q^{m_{r}-m_{s}}x_{r}/x_{s}}{1-x_{r}/x_{s}}\prod_{r,s=1}^{n}\frac{\left(q^{-k_{s}}x_{r}/x_{s}\right)_{m_{r}}}{\left(qx_{r}/x_{s}\right)_{m_{r}}}\\
 & \;\times\prod_{r=1}^{n}\frac{\left(\alpha x_{r}q^{|\boldsymbol{k}|},\alpha\beta x_{r},\alpha\gamma x_{r}\right)_{m_{r}}}{(\alpha bx_{r},\alpha cx_{r},\alpha dx_{r})_{m_{r}}}q^{\sum_{r=1}^{n}rm_{r}}.
\end{aligned}
\label{eq:12-4}
\end{equation}
\end{thm}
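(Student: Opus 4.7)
My plan for Theorem \ref{t11-4} is to specialize the sixth $A_{n}$ extension of Liu's expansion formula (Theorem \ref{t11-2}, equation \eqref{eq:12-2}) by choosing
\[
f(y_{1},\cdots,y_{n})=\prod_{r=1}^{n}\frac{(cx_{r}y_{r}/q,dx_{r}y_{r}/q)_{\infty}}{(\beta x_{r}y_{r}/q,\gamma x_{r}y_{r}/q)_{\infty}},
\]
as flagged in the author's remark immediately preceding the statement. Since this $f$ does not depend on $a_{1},\dots,a_{n}$, the analyticity/termination hypothesis of Theorem \ref{t11-2} is automatically met once the non-vanishing of denominators is assumed, so \eqref{eq:12-2} may be applied directly.

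On the left-hand side of \eqref{eq:12-2} the substitution produces $f(\alpha a_{1},\ldots,\alpha a_{n})=\prod_{r=1}^{n}(\alpha a_{r}cx_{r}/q,\alpha a_{r}dx_{r}/q)_{\infty}/(\alpha a_{r}\beta x_{r}/q,\alpha a_{r}\gamma x_{r}/q)_{\infty}$, and combined with the standing factor $\prod_{r=1}^{n}(\alpha qx_{r},\alpha a_{r}bx_{r}/q)_{\infty}/(\alpha bx_{r},\alpha a_{r}x_{r})_{\infty}$ this already yields four of the six $\alpha$-shifted infinite products on the left of \eqref{eq:12-4}. The one place where a small manipulation is needed is the evaluation of $f$ at the summation points on the right-hand side. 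Using $(zq^{m})_{\infty}=(z)_{\infty}/(z)_{m}$ termwise gives
\[
f(\alpha q^{1+m_{1}},\ldots,\alpha q^{1+m_{n}})=\prod_{r=1}^{n}\frac{(\alpha cx_{r},\alpha dx_{r})_{\infty}}{(\alpha\beta x_{r},\alpha\gamma x_{r})_{\infty}}\cdot\prod_{r=1}^{n}\frac{(\alpha\beta x_{r},\alpha\gamma x_{r})_{m_{r}}}{(\alpha cx_{r},\alpha dx_{r})_{m_{r}}},
\]
so the $\boldsymbol{m}$-independent factor pulls outside every inner sum.

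Transferring the reciprocal of that first factor to the left-hand side supplies precisely the missing entries $(\alpha\beta x_{r},\alpha\gamma x_{r})_{\infty}/(\alpha cx_{r},\alpha dx_{r})_{\infty}$ that complete the six-parameter product on the left of \eqref{eq:12-4}, while the $\boldsymbol{m}$-dependent factor remains inside the $\boldsymbol{m}$-summation and matches exactly the new $(\alpha\beta x_{r},\alpha\gamma x_{r})_{m_{r}}/(\alpha cx_{r},\alpha dx_{r})_{m_{r}}$ block on the right of \eqref{eq:12-4}. I do not anticipate any real obstacle: the derivation is a direct substitution followed by one application of the standard factorization of a shifted $q$-Pochhammer, with no nontrivial cancellations between the new $(c,d,\beta,\gamma)$-factors and the pre-existing $b$-block. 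The only care needed is bookkeeping, to verify that the hypothesis ``none of the denominators vanishes'', together with $|a_{1}\cdots a_{n}/q^{n}|<1$ (or the terminating assumption $a_{r}=q^{N_{r}}$), is sufficient to legitimize the invocation of Theorem \ref{t11-2}.
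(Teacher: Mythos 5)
Your proposal is correct and is exactly the derivation the paper intends: substitute $f(y_{1},\dots,y_{n})=\prod_{r=1}^{n}(cx_{r}y_{r}/q,dx_{r}y_{r}/q)_{\infty}/(\beta x_{r}y_{r}/q,\gamma x_{r}y_{r}/q)_{\infty}$ into \eqref{eq:12-2}, factor $f(\alpha q^{1+m_{1}},\dots,\alpha q^{1+m_{n}})$ via $(zq^{m})_{\infty}=(z)_{\infty}/(z)_{m}$, and move the $\boldsymbol{m}$-independent product to the left-hand side. The paper only states this substitution without carrying out the simplification, and your bookkeeping of the resulting factors matches \eqref{eq:12-4}.
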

Take $x_{r}\mapsto x_{r}/x_{n},a_{1}\mapsto a$ in the $n=1$ case
of the identities in Theorems \ref{t21-1}\textendash \ref{t11-4}.
We can obtain the following expansion formula \cite[Theorem 1.2]{L2}:
for $\max\{|\alpha a|,$ $|\alpha b|,|\alpha c|,|\alpha d|,|\alpha\beta a/q|,|\alpha\gamma a/q|\}<1,$
we have
\begin{equation}
\begin{aligned} & \frac{(\alpha q,\alpha ab/q,\alpha ac/q,\alpha ad/q,\alpha\beta,\alpha\gamma)_{\infty}}{(\alpha a,\alpha b,\alpha c,\alpha d,\alpha\beta a/q,\alpha\gamma a/q)_{\infty}}\\
 & =\sum_{n=0}^{\infty}\frac{(1-\alpha q^{2n})(\alpha,q/a)_{n}(a/q)^{n}}{(1-\alpha)(q,\alpha a)_{n}}\sum_{k=0}^{n}\frac{(q^{-n},\alpha q^{n},\alpha\beta,\alpha\gamma)_{k}q^{k}}{(q,\alpha b,\alpha c,\alpha d)_{k}}.
\end{aligned}
\label{eq:10-7}
\end{equation}
Therefore, the identities in Theorems \ref{t21-1}\textendash \ref{t11-4}
are all $A_{n}$ extensions of \eqref{eq:10-7}. The identity \eqref{eq:10-7}
is very useful in \cite{L2}. It was used by Liu \cite{L2} to give
a new proof of the orthogonality relation for the Askey-Wilson polynomials
and to derive a useful identity which is equivalent to the well-known
identity of Andrews in \cite[Theorem 5]{A86}.

Several multiple expansion formulas for $(q)_{\infty}^{3}$ and $(q)_{\infty}^{-2}$
can be easily derived from the identities in Theorems \ref{t21-1}\textendash \ref{t11-4}.
For example, we set $\alpha=q,\beta=\gamma=1,a_{1}=\cdots=a_{n}=b=c=d=0$
or $\alpha=1,\beta=\gamma=q,a_{1}=\cdots=a_{n}=b=c=d=0$ in the identity
of Theorem \ref{t21-1} to get two multiple expansion formulas for
$(q)_{\infty}^{3}:$

\begin{equation}
\begin{aligned}(q)_{\infty}^{3} & =\sum_{{k_{r}\geq0\atop r=1,2,\cdots,n}}\prod_{1\leq r<s\leq n}\frac{1-q^{k_{r}-k_{s}}x_{r}/x_{s}}{1-x_{r}/x_{s}}\cdot\frac{(1-q^{2|\boldsymbol{k}|+1})(q)_{|\boldsymbol{k}|}}{\prod_{r,s=1}^{n}(qx_{r}/x_{s})_{k_{r}}}\\
 & \times(-1)^{n|\boldsymbol{k}|}q^{\sum_{r=1}^{n}(r-1)k_{r}+n\sum_{r=1}^{n}{k_{r} \choose 2}}\prod_{r=1}^{n}x_{r}^{nk_{r}-|\boldsymbol{k}|}\\
 & \;\times\sum_{{0\leq m_{r}\leq k_{r}\atop r=1,2,\cdots,n}}\prod_{1\leq r<s\leq n}\frac{1-q^{m_{r}-m_{s}}x_{r}/x_{s}}{1-x_{r}/x_{s}}\\
 & \quad\times\prod_{r,s=1}^{n}\frac{(q^{-k_{s}}x_{r}/x_{s})_{m_{r}}}{(qx_{r}/x_{s})_{m_{r}}}\cdot(q^{|\boldsymbol{k}|+1},q,q)_{|\boldsymbol{m}|}q^{\sum_{r=1}^{n}rm_{r}}
\end{aligned}
\label{eq:1-20}
\end{equation}
and 
\begin{equation}
\begin{aligned}(q)_{\infty}^{3} & =1+\sum_{{k_{1}\geq0,\cdots,k_{n}\geq0\atop |\boldsymbol{k}|\geq1}}\prod_{1\leq r<s\leq n}\frac{1-q^{k_{r}-k_{s}}x_{r}/x_{s}}{1-x_{r}/x_{s}}\cdot\frac{(1-q^{2|\boldsymbol{k}|})(q)_{|\boldsymbol{k}|-1}}{\prod_{r,s=1}^{n}(qx_{r}/x_{s})_{k_{r}}}\\
 & \times(-1)^{n|\boldsymbol{k}|}q^{\sum_{r=1}^{n}(r-1)k_{r}+n\sum_{r=1}^{n}{k_{r} \choose 2}}\prod_{r=1}^{n}x_{r}^{nk_{r}-|\boldsymbol{k}|}\\
 & \;\times\sum_{{0\leq m_{r}\leq k_{r}\atop r=1,2,\cdots,n}}\prod_{1\leq r<s\leq n}\frac{1-q^{m_{r}-m_{s}}x_{r}/x_{s}}{1-x_{r}/x_{s}}\\
 & \quad\times\prod_{r,s=1}^{n}\frac{(q^{-k_{s}}x_{r}/x_{s})_{m_{r}}}{(qx_{r}/x_{s})_{m_{r}}}\cdot(q^{|\boldsymbol{k}|},q,q)_{|\boldsymbol{m}|}q^{\sum_{r=1}^{n}rm_{r}}.
\end{aligned}
\label{eq:1-21}
\end{equation}
Taking $\alpha=q,\beta=\gamma=a_{1}=\cdots=a_{n}=0,b=q,c=d=1$ or
$\alpha=1,\beta=\gamma=a_{1}=\cdots=a_{n}=0,b=c=d=q$ in the identity
of Theorem \ref{t21-1} we obtain two multiple expansion formulas
for $(q)_{\infty}^{-2}:$
\begin{equation}
\begin{aligned}\frac{1}{(q)_{\infty}^{2}} & =\sum_{{k_{r}\geq0\atop r=1,2,\cdots,n}}\prod_{1\leq r<s\leq n}\frac{1-q^{k_{r}-k_{s}}x_{r}/x_{s}}{1-x_{r}/x_{s}}\cdot\frac{(1-q^{2|\boldsymbol{k}|+1})(q)_{|\boldsymbol{k}|}}{(1-q)\prod_{r,s=1}^{n}(qx_{r}/x_{s})_{k_{r}}}\\
 & \times(-1)^{n|\boldsymbol{k}|}q^{\sum_{r=1}^{n}(r-1)k_{r}+n\sum_{r=1}^{n}{k_{r} \choose 2}}\prod_{r=1}^{n}x_{r}^{nk_{r}-|\boldsymbol{k}|}\\
 & \;\times\sum_{{0\leq m_{r}\leq k_{r}\atop r=1,2,\cdots,n}}\prod_{1\leq r<s\leq n}\frac{1-q^{m_{r}-m_{s}}x_{r}/x_{s}}{1-x_{r}/x_{s}}\\
 & \quad\times\prod_{r,s=1}^{n}\frac{(q^{-k_{s}}x_{r}/x_{s})_{m_{r}}}{(qx_{r}/x_{s})_{m_{r}}}\cdot\frac{(q^{|\boldsymbol{k}|+1})_{|\boldsymbol{m}|}}{(q^{2},q,q)_{|\boldsymbol{m}|}}q^{\sum_{r=1}^{n}rm_{r}}
\end{aligned}
\label{eq:1-22}
\end{equation}
and 
\begin{equation}
\begin{aligned}\frac{1}{(q)_{\infty}^{2}} & =1+\sum_{{k_{1}\geq0,\cdots,k_{n}\geq0\atop |\boldsymbol{k}|\geq1}}\prod_{1\leq r<s\leq n}\frac{1-q^{k_{r}-k_{s}}x_{r}/x_{s}}{1-x_{r}/x_{s}}\cdot\frac{(1-q^{2|\boldsymbol{k}|})(q)_{|\boldsymbol{k}|-1}}{\prod_{r,s=1}^{n}(qx_{r}/x_{s})_{k_{r}}}\\
 & \times(-1)^{n|\boldsymbol{k}|}q^{\sum_{r=1}^{n}(r-1)k_{r}+n\sum_{r=1}^{n}{k_{r} \choose 2}}\prod_{r=1}^{n}x_{r}^{nk_{r}-|\boldsymbol{k}|}\\
 & \;\times\sum_{{0\leq m_{r}\leq k_{r}\atop r=1,2,\cdots,n}}\prod_{1\leq r<s\leq n}\frac{1-q^{m_{r}-m_{s}}x_{r}/x_{s}}{1-x_{r}/x_{s}}\\
 & \quad\times\prod_{r,s=1}^{n}\frac{(q^{-k_{s}}x_{r}/x_{s})_{m_{r}}}{(qx_{r}/x_{s})_{m_{r}}}\cdot\frac{(q^{|\boldsymbol{k}|})_{|\boldsymbol{m}|}}{(q,q,q)_{|\boldsymbol{m}|}}q^{\sum_{r=1}^{n}rm_{r}}.
\end{aligned}
\label{eq:1-23}
\end{equation}
For any interger $m,$ some general multiple expansion formulas for
$(q)_{\infty}^{m}$ will be showed in Section \ref{sec:2}.

The rest of this paper is organized as follows. In the next section,
we first construct two inverse matrices equivalent to the matrices
in \cite[Theorem 8.26]{M} and then employ a inverse relation satisfied
by these two matrices to prove Theorem \ref{t1}. The identity in
Theorem \ref{t1} is very useful. In Section \ref{sec:3}, we will
apply the identity \eqref{eq:t1-1} in Theorem \ref{t1} and a multiple
series expansion formula in \cite[Theorem 2.2]{BR} to show the identities
in Theorems \ref{th-1}\textendash \ref{t11-2}.  As applications,
 in Section \ref{sec:4}, we will apply the identity in Theorem \ref{t11-3}
to deduce an $A_{n}$ Rogers' $\text{}_{6}\phi_{5}$ summation formula
which is equivalent to the identity in \cite[Theorem A.3]{MN12}.
From this formula, we will derive an $A_{n}$ extension of Sylvester's
identity. This $A_{n}$ extension is equivalent to the identity in
\cite[Theorem 5.19]{M94}; in Section \ref{sec:2}, we will employ
the identities in Theorems \ref{th-1} and \ref{t21-1} to deduce
some multiple expansion formulas for $(q)_{\infty}^{m},\text{\ensuremath{\pi_{q}}}$
and $1/\pi_{q}$; in Section \ref{sec:7}, we will apply the identity
\eqref{eq:t1-1} of Theorem \ref{t1} to present two $A_{n}$ extensions
of the Rogers-Fine identity \cite[eq.(14.1)]{Fi}; in Section \ref{sec:8},
we will employ the identity \eqref{eq:12-1} in Theorem \ref{t11-2}
to give an $A_{n}$ extension of Liu's extension \cite[Theorem 3]{L11}
of Rogers' non-terminating $_{6}\phi_{5}$ summation \cite[eq.(II.20)]{GR};
in Section \ref{sec:9}, we will use the identity \eqref{eq:t1-1}
of Theorem \ref{t1} to prove an $A_{n}$ extension of a generalization
of Fang's identity \cite[eq.(9)]{F07}; and in the last section, we
will apply the identity in Theorem \ref{t21-1} to deduce an $A_{n}$
extension of Andrews' expansion formula \cite[Theorem 5]{A86}. 

\section{Proof of Theorem \ref{t1}}

We first construct two inverse, infinite matrices, which are equivalent
to the matrices in \cite[Theorem 8.26]{M}. Let 
\[
H_{\boldsymbol{km}}(a)=\prod_{r,s=1}^{n}\left(q^{1+m_{r}-m_{s}}\frac{x_{r}}{x_{s}}\right)_{k_{r}-m_{r}}^{-1}(aq)_{|\boldsymbol{k}|+|\boldsymbol{m}|}^{-1}
\]
and 
\begin{align*}
I_{\boldsymbol{km}} & (a)=(1-aq^{2|\boldsymbol{k}|})(aq)_{|\boldsymbol{k}|+|\boldsymbol{m}|-1}\prod_{r,s=1}^{n}\left(q^{1+m_{r}-m_{s}}\frac{x_{r}}{x_{s}}\right)_{k_{r}-m_{r}}^{-1}\\
 & \quad\times(-1)^{|\boldsymbol{k}|+|\boldsymbol{m}|}q^{{|\boldsymbol{k}|-|\boldsymbol{m}| \choose 2}}
\end{align*}
with rows and columns ordered lexicographically. Then $\boldsymbol{H}=(H_{\boldsymbol{km}}(a))$
and $\boldsymbol{I}=(I_{\boldsymbol{km}}(a))$ are inverse, infinite
matrices.

Recall the following important result \cite[Lemma 4.3]{M}:
\begin{equation}
\begin{aligned} & \prod_{r,s=1}^{n}\left(q^{1+m_{r}-m_{s}}\frac{x_{r}}{x_{s}}\right)_{k_{r}-m_{r}}^{-1}\\
 & =(-1)^{|\boldsymbol{m}|}q^{|\boldsymbol{k}||\boldsymbol{m}|-{|\boldsymbol{m}| \choose 2}+\sum_{r=1}^{n}(r-1)m_{r}}\\
 & \quad\times\prod_{1\leq r<s\leq n}\frac{1-q^{m_{r}-m_{s}}x_{r}/x_{s}}{1-x_{r}/x_{s}}\prod_{r,s=1}^{n}\frac{(q^{-k_{s}}x_{r}/x_{s})_{m_{r}}}{(qx_{r}/x_{s})_{k_{r}}}.
\end{aligned}
\label{eq:a1-1}
\end{equation}
Let $\boldsymbol{D}$ and $\boldsymbol{E}$ be two diagonal matrices
with diagonal entries given by 
\[
D_{kk}=(aq)_{|\boldsymbol{k}|}\prod_{r,s=1}^{n}(qx_{r}/x_{s})_{k_{r}}
\]
and 
\[
E_{mm}=(-1)^{|\boldsymbol{m}|}q^{{|\boldsymbol{m}| \choose 2}}(1-aq^{2|\boldsymbol{m}|}).
\]
Set $\boldsymbol{F}=\boldsymbol{D}\boldsymbol{H}\boldsymbol{E}$ and
$\boldsymbol{G}=\boldsymbol{E}^{-1}\boldsymbol{I}\boldsymbol{D}^{-1}.$
Then, by \eqref{eq:a1-1},
\begin{equation}
\begin{aligned}F_{\boldsymbol{km}} & (a)=\prod_{1\leq r<s\leq n}\frac{1-q^{m_{r}-m_{s}}x_{r}/x_{s}}{1-x_{r}/x_{s}}q^{|\boldsymbol{k}||\boldsymbol{m}|+\sum_{r=1}^{n}(r-1)m_{r}}\\
 & \quad\times\frac{1-aq^{2|\boldsymbol{m}|}}{(aq^{|\boldsymbol{k}|+1})_{|\boldsymbol{m}|}}\prod_{r,s=1}^{n}(q^{-k_{s}}x_{r}/x_{s})_{m_{r}}
\end{aligned}
\label{eq:a1-7}
\end{equation}
and 
\begin{equation}
\begin{aligned}G_{\boldsymbol{km}} & (a)=\prod_{1\leq r<s\leq n}\frac{1-q^{m_{r}-m_{s}}x_{r}/x_{s}}{1-x_{r}/x_{s}}q^{\sum_{r=1}^{n}rm_{r}}\\
 & \quad\times\frac{(aq^{|\boldsymbol{m}|})_{|\boldsymbol{k}|}}{1-aq^{|\boldsymbol{m}|}}\prod_{r,s=1}^{n}\frac{(q^{-k_{s}}x_{r}/x_{s})_{m_{r}}}{(qx_{r}/x_{s})_{k_{r}}(qx_{r}/x_{s})_{m_{r}}}.
\end{aligned}
\label{eq:a1-8}
\end{equation}
Since $\boldsymbol{H}$ and $\boldsymbol{I}$ are inverses, the matrices
$\boldsymbol{F}$ and $\boldsymbol{G}$ are also inverses. Note that
$F_{\boldsymbol{km}}(a)=G_{\boldsymbol{km}}(a)=0$ unless $m_{r}\leq k_{r}$
for $r=1,2,\cdots,n.$ We have

\begin{equation}
\sum_{{m_{r}\leq k_{r}\leq N_{r}\atop r=1,2,\cdots,n}}F_{\boldsymbol{Nk}}(a)G_{\boldsymbol{km}}(a)=\prod_{r=1}^{n}\delta_{N_{r}m_{r}},\label{eq:a1-2}
\end{equation}
where
\[
\delta_{ij}=\begin{cases}
1, & \mathrm{if}\:i=j,\\
0, & \mathrm{otherwise}.
\end{cases}
\]

We now prove \eqref{eq:t1-1}. Let $f_{\boldsymbol{k}}(\boldsymbol{y};a)$
be a function of $\boldsymbol{y}=(y_{1},y_{2},\cdots,y_{n})$ given
by 
\begin{align*}
f_{\boldsymbol{k}}(\boldsymbol{y};a) & =\prod_{1\leq r<s\leq n}\frac{1-q^{k_{r}-k_{s}}x_{r}/x_{s}}{1-x_{r}/x_{s}}q^{\sum_{r=1}^{n}(r-1)k_{r}}\\
 & \quad\times\frac{1-aq^{2|\boldsymbol{k}|}}{(aqy_{1}y_{2}\cdots y_{n})_{|\boldsymbol{k}|}}(y_{1}y_{2}\cdots y_{n})^{|\boldsymbol{k}|}\prod_{r,s=1}^{n}(x_{r}/x_{s}y_{s})_{k_{r}}.
\end{align*}
Then $f_{\boldsymbol{k}}(\boldsymbol{y};a)$ is analytic in $\boldsymbol{y}$
in a disk around the origin and 
\begin{equation}
f_{\boldsymbol{k}}(q^{\boldsymbol{m}};a)=F_{\boldsymbol{mk}}(a).\label{eq:a1-9}
\end{equation}

Let
\begin{equation}
g(\boldsymbol{y})=K(\boldsymbol{y})\sum_{{j_{r}\geq0\atop r=1,2,\cdots,n}}f_{\boldsymbol{j}}(\boldsymbol{y};b)\beta(\boldsymbol{j}).\label{eq:a1-3}
\end{equation}
It follows from \eqref{eq:a1-2} that 
\begin{equation}
\begin{aligned}g(q^{\boldsymbol{N}}) & =\sum_{{0\leq m_{r}\leq N_{r}\atop r=1,2,\cdots,n}}\sum_{{m_{r}\leq k_{r}\leq N_{r}\atop r=1,2,\cdots,n}}F_{\boldsymbol{Nk}}(a)G_{\boldsymbol{km}}(a)g(q^{\boldsymbol{m}})\\
 & =\sum_{{0\leq k_{r}\leq N_{r}\atop r=1,2,\cdots,n}}F_{\boldsymbol{Nk}}(a)\sum_{{0\leq m_{r}\leq k_{r}\atop r=1,2,\cdots,n}}G_{\boldsymbol{km}}(a)g(q^{\boldsymbol{m}}).
\end{aligned}
\label{eq:a1-4}
\end{equation}
Substituting \eqref{eq:a1-3} into both sides of \eqref{eq:a1-4}
we have
\begin{equation}
\begin{aligned} & K(q^{\boldsymbol{N}})\sum_{{j_{r}\geq0\atop r=1,2,\cdots,n}}f_{\boldsymbol{j}}(q^{\boldsymbol{N}};b)\beta(\boldsymbol{j})\\
 & =\sum_{{0\leq k_{r}\leq N_{r}\atop r=1,2,\cdots,n}}F_{\boldsymbol{Nk}}(a)\sum_{{0\leq m_{r}\leq k_{r}\atop r=1,2,\cdots,n}}G_{\boldsymbol{km}}(a)K(q^{\boldsymbol{m}})\sum_{{0\leq j_{r}\leq m_{r}\atop r=1,2,\cdots,n}}f_{\boldsymbol{j}}(q^{\boldsymbol{m}};b)\beta(\boldsymbol{j})\\
 & =\sum_{{0\leq k_{r}\leq N_{r}\atop r=1,2,\cdots,n}}F_{\boldsymbol{Nk}}(a)\sum_{{0\leq j_{r}\leq k_{r}\atop r=1,2,\cdots,n}}\beta(\boldsymbol{j})\sum_{{0\leq m_{r}\leq k_{r}-j_{r}\atop r=1,2,\cdots,n}}G_{\boldsymbol{k,m+j}}(a)K(q^{\boldsymbol{m+j}})f_{\boldsymbol{j}}(q^{\boldsymbol{m+j}};b),
\end{aligned}
\label{eq:a1-6}
\end{equation}
where we interchanged the inner sums using the identity
\[
\sum_{{0\leq m_{r}\leq k_{r}\atop r=1,2,\cdots,n}}\sum_{{0\leq j_{r}\leq m_{r}\atop r=1,2,\cdots,n}}S(\boldsymbol{m},\boldsymbol{j})=\sum_{{0\leq j_{r}\leq k_{r}\atop r=1,2,\cdots,n}}\sum_{{0\leq m_{r}\leq k_{r}-j_{r}\atop r=1,2,\cdots,n}}S(\boldsymbol{m}+\boldsymbol{j},\boldsymbol{j}).
\]
Replacing $N_{r}$ by $m_{r}+j_{r},$ $y_{r}$ by $j_{r}$ in the
identity of \cite[Lemma 4.3]{M} we get
\begin{equation}
\begin{aligned} & \prod_{r,s=1}^{n}\frac{(q^{-m_{s}-j_{s}}x_{r}/x_{s})_{j_{r}}}{(qx_{r}/x_{s})_{m_{r}+j_{r}}}\\
 & =\prod_{r,s=1}^{n}(q^{1+j_{r}-j_{s}}x_{r}/x_{s})_{m_{r}}^{-1}\prod_{1\leq r<s\leq n}\frac{1-x_{r}/x_{s}}{1-q^{j_{r}-j_{s}}x_{r}/x_{s}}\\
 & \quad\times(-1)^{|\boldsymbol{j}|}q^{{|\boldsymbol{j}| \choose 2}-\sum_{r=1}^{n}(r-1)j_{r}-(|\boldsymbol{m}|+|\boldsymbol{j}|)|\boldsymbol{j}|}.
\end{aligned}
\label{eq:a1-5}
\end{equation}
Dividing both sides of \eqref{eq:a1-6} by $1-b,$ then substituting
\eqref{eq:a1-7}, \eqref{eq:a1-8}, \eqref{eq:a1-9} and \eqref{eq:a1-5}
into \eqref{eq:a1-6} and finally simplifying using the identities
\begin{align*}
 & \frac{(aq^{m+j})_{k}}{1-aq^{m+j}}=\frac{(a)_{k}(aq^{k})_{j}(aq^{k+j})_{m}}{(1-a)(aq)_{m+j}},\\
 & \frac{1-bq^{2j}}{(1-b)(bq^{m+j+1})_{j}}=\frac{(bq)_{j}(bq^{1+j})_{m}}{(b)_{2j}(bq^{2j+1})_{m}}
\end{align*}
we obtain
\begin{align*}
 & K(q^{\boldsymbol{N}})\sum_{{0\leq j_{r}\leq N_{r}\atop r=1,2,\cdots,n}}\prod_{1\leq r<s\leq n}\frac{1-q^{j_{r}-j_{s}}x_{r}/x_{s}}{1-x_{r}/x_{s}}\prod_{r,s=1}^{n}(q^{-N_{s}}x_{r}/x_{s})_{j_{r}}\\
 & \quad\times\frac{1-bq^{2|\boldsymbol{j}|}}{(1-b)(bq^{|\boldsymbol{N}|+1})_{|\boldsymbol{j}|}}q^{|\boldsymbol{N}||\boldsymbol{j}|+\sum_{r=1}^{n}(r-1)j_{r}}\beta(\boldsymbol{j})\\
 & =\sum_{{0\leq k_{r}\leq N_{r}\atop r=1,2,\cdots,n}}\prod_{1\leq r<s\leq n}\frac{1-q^{k_{r}-k_{s}}x_{r}/x_{s}}{1-x_{r}/x_{s}}\prod_{r,s=1}^{n}\frac{(q^{-N_{s}}x_{r}/x_{s})_{k_{r}}}{(qx_{r}/x_{s})_{k_{r}}}\\
 & \times\frac{(1-aq^{2|\boldsymbol{k}|})(a)_{|\boldsymbol{k}|}}{(1-a)(aq^{|\boldsymbol{N}|+1})_{|\boldsymbol{k}|}}q^{|\boldsymbol{N}||\boldsymbol{k}|+\sum_{r=1}^{n}(r-1)k_{r}}\\
 & \times\sum_{{0\leq j_{r}\leq k_{r}\atop r=1,2,\cdots,n}}\prod_{1\leq r<s\leq n}\frac{1-q^{j_{r}-j_{s}}x_{r}/x_{s}}{1-x_{r}/x_{s}}\prod_{r,s=1}^{n}(q^{-k_{s}}x_{r}/x_{s})_{j_{r}}\\
 & \;\times\frac{(aq^{|\boldsymbol{k}|},bq)_{|\boldsymbol{j}|}}{(b)_{2|\boldsymbol{j}|}}(-1)^{|\boldsymbol{j}|}q^{\sum_{r=1}^{n}rj_{r}+{|\boldsymbol{j}| \choose 2}}\beta(\boldsymbol{j})\\
 & \;\times\sum_{{0\leq m_{r}\leq k_{r}-j_{r}\atop r=1,2,\cdots,n}}\prod_{1\leq r<s\leq n}\frac{1-q^{m_{r}-m_{s}+j_{r}-j_{s}}x_{r}/x_{s}}{1-q^{j_{r}-j_{s}}x_{r}/x_{s}}q^{\sum_{r=1}^{n}rm_{r}}K(q^{\boldsymbol{m+j}})\\
 & \quad\times\prod_{r,s=1}^{n}\frac{(q^{-k_{s}+j_{r}}x_{r}/x_{s})_{m_{r}}}{(q^{1+j_{r}-j_{s}}x_{r}/x_{s})_{m_{r}}}\frac{(aq^{|\boldsymbol{k}|+|\boldsymbol{j}|},bq^{1+|\boldsymbol{j}|})_{|\boldsymbol{m}|}}{(aq)_{|\boldsymbol{m}|+|\boldsymbol{j}|}(bq^{2|\boldsymbol{j}|+1})_{|\boldsymbol{m}|}}.
\end{align*}
This indicates that the identity \eqref{eq:t1-1} holds when $y_{r}=q^{N_{r}}$
for $r=1,2,\cdots,n.$ Assume that both sides of \eqref{eq:t1-1}
are analytic in $y_{1},y_{2},...,y_{n}$ in a disk around the origin.
Note that $N_{r}=0,1,\cdots$ for $r=1,2,\cdots,n.$ Thus the identity
\eqref{eq:t1-1} is also true in the same disk around the origin by
analytic continuation. This completes the proof of Theorem \ref{t1}.
\qed

\section{\label{sec:3} Proofs of Theorems \ref{th-1}\textendash \ref{t11-2}}

\noindent{\it Proofs of Theorems \ref{th-1}--\ref{th-4}.} Taking
$\beta(\boldsymbol{j})=\prod_{r=1}^{n}\delta_{j_{r},0}$ in \eqref{eq:t1-1}
we have
\begin{equation}
\begin{aligned}K(\boldsymbol{y}) & =\sum_{{k_{r}\geq0\atop r=1,2,\cdots,n}}\prod_{1\leq r<s\leq n}\frac{1-q^{k_{r}-k_{s}}x_{r}/x_{s}}{1-x_{r}/x_{s}}\prod_{r,s=1}^{n}\frac{(x_{r}/x_{s}y_{s})_{k_{r}}}{(qx_{r}/x_{s})_{k_{r}}}\\
 & \times\frac{(1-aq^{2|\boldsymbol{k}|})(a)_{|\boldsymbol{k}|}}{(1-a)(aqy_{1}y_{2}\cdots y_{n})_{|\boldsymbol{k}|}}(y_{1}y_{2}\cdots y_{n})^{|\boldsymbol{k}|}q^{\sum_{r=1}^{n}(r-1)k_{r}}\\
 & \;\times\sum_{{0\leq m_{r}\leq k_{r}\atop r=1,2,\cdots,n}}\prod_{1\leq r<s\leq n}\frac{1-q^{m_{r}-m_{s}}x_{r}/x_{s}}{1-x_{r}/x_{s}}q^{\sum_{r=1}^{n}rm_{r}}K(q^{\boldsymbol{m}})\\
 & \quad\times\prod_{r,s=1}^{n}\frac{(q^{-k_{s}}x_{r}/x_{s})_{m_{r}}}{(qx_{r}/x_{s})_{m_{r}}}\frac{(aq^{|\boldsymbol{k}|})_{|\boldsymbol{m}|}}{(aq)_{|\boldsymbol{m}|}}.
\end{aligned}
\label{eq:a1-16}
\end{equation}
Then \eqref{eq:a1-17} follows readily by setting 
\[
K(\boldsymbol{y})=f(\alpha qy_{1}y_{2}\cdots y_{n})\frac{(\alpha q,\alpha by_{1}y_{2}\cdots y_{n})_{\infty}}{(\alpha b,\alpha qy_{1}y_{2}\cdots y_{n})_{\infty}}
\]
in \eqref{eq:a1-16}, and then replacing $a$ by $\alpha,$ $y_{r}$
by $a_{r}/q$ in the resulting identity.

Putting
\[
K(\boldsymbol{y})=f(\alpha qy_{1},\cdots,\alpha qy_{n})\frac{(\alpha q,\alpha by_{1}y_{2}\cdots y_{n})_{\infty}}{(\alpha b,\alpha qy_{1}y_{2}\cdots y_{n})_{\infty}}
\]
in \eqref{eq:a1-16}, and then replacing $a$ by $\alpha,$ $y_{r}$
by $a_{r}/q$ we easily obtain \eqref{eq:a1-18}.

Take 
\[
K(\boldsymbol{y})=f(\alpha qy_{1}y_{2}\cdots y_{n})\frac{(\alpha q)_{\infty}}{(\alpha qy_{1}y_{2}\cdots y_{n})_{\infty}}\prod_{r=1}^{n}\frac{(\alpha bx_{r}y_{r})_{\infty}}{(\alpha bx_{r})_{\infty}}
\]
in \eqref{eq:a1-16}, and then replace $a$ by $\alpha,$ $y_{r}$
by $a_{r}/q$ we can get \eqref{eq:a1-19}.

The identity \eqref{eq:a1-20} follows easily by substituting
\[
K(\boldsymbol{y})=f(\alpha qy_{1},\cdots,\alpha qy_{n})\frac{(\alpha q)_{\infty}}{(\alpha qy_{1}y_{2}\cdots y_{n})_{\infty}}\prod_{r=1}^{n}\frac{(\alpha bx_{r}y_{r})_{\infty}}{(\alpha bx_{r})_{\infty}}
\]
into \eqref{eq:a1-16}, and then replacing $a$ by $\alpha,$ $y_{r}$
by $a_{r}/q.$ \qed

\noindent{\it Proofs of Theorems \ref{t11-1}--\ref{t11-2}.} Recall
a multiple series expansion formula in \cite[Theorem 2.2]{BR}:
\begin{equation}
\begin{aligned} & K(\boldsymbol{y})\sum_{{j_{r}\geq0\atop r=1,\ldots,n}}\prod_{1\leq r<s\leq n}\frac{1-q^{j_{r}-j_{s}}x_{r}/x_{s}}{1-x_{r}/x_{s}}\prod_{r,s=1}^{n}\left(x_{r}/x_{s}y_{s}\right)_{j_{r}}\\
 & \times\prod_{r=1}^{n}\frac{1-bx_{r}q^{j_{r}+|\boldsymbol{j}|}}{(1-bx_{r})(bqx_{r}y_{r})_{|\boldsymbol{j}|}}\left(y_{1}\cdots y_{n}\right)^{|\boldsymbol{j}|}q^{\sum_{r=1}^{n}(r-1)j_{r}}\beta(\boldsymbol{j})\\
 & =\sum_{{k_{r}\geq0\atop r=1,\cdots,n}}\prod_{1\leq r<s\leq n}\frac{1-q^{k_{r}-k_{s}}x_{r}/x_{s}}{1-x_{r}/x_{s}}\prod_{r,s=1}^{n}\frac{\left(x_{r}/x_{s}y_{s}\right)_{k_{r}}}{\left(qx_{r}/x_{s}\right)_{k_{r}}}\\
 & \times\prod_{r=1}^{n}\frac{1-ax_{r}q^{k_{r}+|\boldsymbol{k}|}}{1-ax_{r}}\prod_{r=1}^{n}\frac{\left(ax_{r}\right)_{|\boldsymbol{k}|}}{\left(aqx_{r}y_{r}\right)_{|\boldsymbol{k}|}}\left(y_{1}\cdots y_{n}\right)^{|\boldsymbol{k}|}q^{\sum_{r=1}^{n}(r-1)k_{r}}\\
 & \times\sum_{{0\leq j_{r}\leq k_{r}\atop r=1,\cdots,n}}\prod_{1\leq r<s\leq n}\frac{1-q^{j_{r}-j_{s}}x_{r}/x_{s}}{1-x_{r}/x_{s}}\prod_{r,s=1}^{n}\left(q^{-k_{s}}x_{r}/x_{s}\right)_{j_{r}}
\end{aligned}
\label{eq:2-1}
\end{equation}

\[
\begin{aligned} & \;\times\prod_{r=1}^{n}\frac{\left(ax_{r}q^{|\boldsymbol{k}|},bqx_{r}\right)_{j_{r}}}{\left(bx_{r}\right)_{j_{r}+|\boldsymbol{j}|}}(-1)^{|\boldsymbol{j}|}q^{\sum_{r=1}^{n}rj_{r}+\left(_{2}^{|\boldsymbol{j}|}\right)}\beta(\boldsymbol{j})\\
 & \;\times\sum_{{0\leq m_{r}\leq k_{r}-j_{r}\atop r=1,\cdots,n}}\prod_{1\leq r<s\leq n}\frac{1-q^{j_{r}-j_{s}+m_{r}-m_{s}}x_{r}/x_{s}}{1-q^{j_{r}-j_{s}}x_{r}/x_{s}}q^{\sum_{r=1}^{n}rm_{r}}K\left(q^{\boldsymbol{j}+\boldsymbol{m}}\right)\\
 & \quad\times\prod_{r,s=1}^{n}\frac{\left(q^{j_{r}-k_{s}}x_{r}/x_{s}\right)_{m_{r}}}{\left(q^{1+j_{r}-j_{s}}x_{r}/x_{s}\right)_{m_{r}}}\prod_{r=1}^{n}\frac{\left(ax_{r}q^{j_{r}+|\boldsymbol{k}|},bx_{r}q^{1+j_{r}}\right)_{m_{r}}}{\left(aqx_{r}\right)_{j_{r}+m_{r}}\left(bx_{r}q^{1+j_{r}+|\boldsymbol{j}|}\right)_{m_{r}}}.
\end{aligned}
\]
Then the identity \eqref{eq:12-1} follows readily by taking 
\[
K(\boldsymbol{y})=f(\alpha qy_{1}y_{2}\cdots y_{n})\prod_{r=1}^{n}\frac{(\alpha qx_{r},\alpha bx_{r}y_{r})_{\infty}}{(\alpha bx_{r},\alpha qx_{r}y_{r})_{\infty}},
\]
and $\beta(\boldsymbol{j})=\prod_{r=1}^{n}\delta_{j_{r},0}$ in \eqref{eq:2-1}
and then replacing $a$ by $\alpha,$ $y_{r}$ by $a_{r}/q$ in the
resulting formula.

Setting 
\[
K(\boldsymbol{y})=f(\alpha qy_{1},\cdots,\alpha qy_{n})\prod_{r=1}^{n}\frac{(\alpha qx_{r},\alpha bx_{r}y_{r})_{\infty}}{(\alpha bx_{r},\alpha qx_{r}y_{r})_{\infty}},
\]
and $\beta(\boldsymbol{j})=\prod_{r=1}^{n}\delta_{j_{r},0}$ in \eqref{eq:2-1}
and then substituting $a\mapsto\alpha,y_{r}\mapsto a_{r}/q$ into
the resulting identity we easily obtain \eqref{eq:12-2}. \qed

\section{\label{sec:4} An $A_{n}$ Rogers' $\text{}_{6}\phi_{5}$ summation
and an $A_{n}$ extension of Sylvester's identity}

In this section, we apply the identity \eqref{eq:12-3} in Theorem
\ref{t11-3} deduce an $A_{n}$ Rogers' $\text{}_{6}\phi_{5}$ summation
formula which is equivalent to the identity in \cite[Theorem A.3]{MN12}.
From this summation formula, we derive an $A_{n}$ extension of Sylvester's
identity. This $A_{n}$ extension is equivalent to the identity in
\cite[Theorem 5.19]{M94}.

\subsection{An $A_{n}$ Rogers' $\text{}_{6}\phi_{5}$ summation formula}

Milne proved the following well-known identity \cite[Theorem 4.1]{M}:
\begin{equation}
\begin{aligned} & \frac{(c/a)_{|\boldsymbol{N}|}}{(c/ab)_{|\boldsymbol{N}|}}\prod_{i=1}^{n}\frac{(cx_{i}/b)_{N_{i}}}{(cx_{i})_{N_{i}}}\\
 & =\sum_{{0\leq y_{i}\leq N_{i}\atop i=1,\cdots,n}}\prod_{1\leq r<s\leq n}\frac{1-q^{y_{r}-y_{s}}x_{r}/x_{s}}{1-x_{r}/x_{s}}\\
 & \times\prod_{r,s=1}^{n}\frac{\left(q^{-N_{s}}x_{r}/x_{s}\right)_{y_{r}}}{\left(qx_{r}/x_{s}\right)_{y_{r}}}\prod_{i=1}^{n}\frac{(ax_{i})_{y_{i}}}{(cx_{i})_{y_{i}}}\\
 & \times\frac{(b)_{|\boldsymbol{y}|}}{(abq^{1-|\boldsymbol{N}|}/c)_{|\boldsymbol{y}|}}q^{\sum_{r=1}^{n}ry_{r}},
\end{aligned}
\label{eq:2-2}
\end{equation}
where $N_{1},N_{2},\cdots,N_{n}$ are nonnegative integers. The $N_{s}\mapsto k_{s},a\mapsto\alpha q^{|\boldsymbol{k}|},b\mapsto\alpha bc/q,c\mapsto\alpha b$
case of \eqref{eq:2-2} is
\[
\begin{aligned} & \sum_{{0\leq y_{i}\leq k_{i}\atop i=1,\cdots,n}}\prod_{1\leq r<s\leq n}\frac{1-q^{y_{r}-y_{s}}x_{r}/x_{s}}{1-x_{r}/x_{s}}\\
 & \times\prod_{r,s=1}^{n}\frac{\left(q^{-k_{s}}x_{r}/x_{s}\right)_{y_{r}}}{\left(qx_{r}/x_{s}\right)_{y_{r}}}\prod_{i=1}^{n}\frac{(\alpha q^{|\boldsymbol{k}|}x_{i})_{y_{i}}}{(\alpha bx_{i})_{y_{i}}}\\
 & \times\frac{(\alpha bc/q)_{|\boldsymbol{y}|}}{(\alpha\alpha b)_{|\boldsymbol{y}|}}q^{\sum_{r=1}^{n}ry_{r}}\\
 & =\frac{(bq^{-|\boldsymbol{k}|})_{|\boldsymbol{k}|}}{(q^{1-|\boldsymbol{k}|}/\alpha c)_{|\boldsymbol{k}|}}\prod_{i=1}^{n}\frac{(qx_{i}/c)_{k_{i}}}{(\alpha bx_{i})_{k_{i}}}\\
 & =\frac{(q/b)_{|\boldsymbol{k}|}}{(\alpha c)_{|\boldsymbol{k}|}}\left(\frac{\alpha bc}{q}\right)^{|\boldsymbol{k}|}\prod_{i=1}^{n}\frac{(qx_{i}/c)_{k_{i}}}{(\alpha bx_{i})_{k_{i}}},
\end{aligned}
\]
where, in the last equality we have used \cite[eq.(I.8)]{GR}. Letting
$\beta=bc/q,\gamma=d$ in \eqref{eq:12-3} of Theorem \ref{t11-3}
and then applying the above identity in the inner sum on the right
side of the resulting identity we get

\begin{thm}
\label{t11-5} Let $\alpha,a_{1},\cdots,a_{n},b,c$ be such that $|\alpha a_{1}\cdots a_{n}bc/q^{1+n}|<1$;
or, $a_{r}=q^{N_{r}},$ for $r=1,2,\cdots,n,$ and so the series terminate.
Suppose that none of the denominators in the following identity vanishes,
then 
\begin{equation}
\begin{aligned} & \frac{(\alpha bc/q,\alpha a_{1}\cdots a_{n}cq^{-n})_{\infty}}{(\alpha c,\alpha a_{1}\cdots a_{n}bcq^{-(1+n)})_{\infty}}\prod_{r=1}^{n}\frac{(\alpha qx_{r},\alpha a_{r}bx_{r}/q)_{\infty}}{(\alpha bx_{r},\alpha a_{r}x_{r})_{\infty}}\\
 & =\sum_{{k_{r}\geq0\atop r=1,\cdots,n}}\prod_{1\leq r<s\leq n}\frac{1-q^{k_{r}-k_{s}}x_{r}/x_{s}}{1-x_{r}/x_{s}}\prod_{r,s=1}^{n}\frac{\left(qx_{r}/x_{s}a_{s}\right)_{k_{r}}}{\left(qx_{r}/x_{s}\right)_{k_{r}}}\\
 & \times\prod_{r=1}^{n}\frac{1-\alpha x_{r}q^{k_{r}+|\boldsymbol{k}|}}{1-\alpha x_{r}}\prod_{r=1}^{n}\frac{\left(\alpha x_{r}\right)_{|\boldsymbol{k}|}}{\left(\alpha x_{r}a_{r}\right)_{|\boldsymbol{k}|}}q^{\sum_{r=1}^{n}(r-1)k_{r}}\\
 & \times\frac{(q/b)_{|\boldsymbol{k}|}}{(\alpha c)_{|\boldsymbol{k}|}}\prod_{r=1}^{n}\frac{(qx_{r}/c)_{k_{r}}}{(\alpha bx_{r})_{k_{r}}}\cdot\bigg(\frac{\alpha a_{1}\cdots a_{n}bc}{q^{1+n}}\bigg)^{|\boldsymbol{k}|}.
\end{aligned}
\label{eq:12-5}
\end{equation}
\end{thm}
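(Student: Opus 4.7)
The plan is to derive \eqref{eq:12-5} from the infinite-product identity \eqref{eq:12-3} of Theorem \ref{t11-3} by a double specialization of $\beta,\gamma$ that simplifies the left-hand side and collapses the inner $\boldsymbol m$-sum on the right to a closed form via Milne's $A_{n}$ summation \eqref{eq:2-2}.

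First I would set $\gamma=d$ in \eqref{eq:12-3}. On the left, the factor $(\alpha\gamma)_{\infty}/(\alpha d)_{\infty}$ reduces to $1$, and the factor $(\alpha a_{1}\cdots a_{n}dq^{-n})_{\infty}/(\alpha\gamma a_{1}\cdots a_{n}q^{-n})_{\infty}$ also reduces to $1$; on the right, the ratio $(\alpha\gamma)_{|\boldsymbol m|}/(\alpha d)_{|\boldsymbol m|}$ collapses to $1$ as well, so the parameter $d$ drops out completely. I would then set $\beta=bc/q$; on the left, this leaves precisely the factors $(\alpha bc/q,\,\alpha a_{1}\cdots a_{n}cq^{-n})_{\infty}$ and $(\alpha c,\,\alpha a_{1}\cdots a_{n}bcq^{-n-1})_{\infty}$ appearing in \eqref{eq:12-5}, while on the right the factor $(\alpha\beta)_{|\boldsymbol m|}/(\alpha c)_{|\boldsymbol m|}$ becomes $(\alpha bc/q)_{|\boldsymbol m|}/(\alpha c)_{|\boldsymbol m|}$.

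Next I would evaluate the resulting inner $\boldsymbol m$-sum in closed form by applying Milne's $A_{n}$ identity \eqref{eq:2-2} with the substitutions $N_{s}\mapsto k_{s},\;a\mapsto\alpha q^{|\boldsymbol k|},\;b\mapsto\alpha bc/q,\;c\mapsto\alpha b$. These are chosen so that $(ax_{i})_{y_{i}}/(cx_{i})_{y_{i}}$ becomes $(\alpha q^{|\boldsymbol k|}x_{i})_{m_{i}}/(\alpha bx_{i})_{m_{i}}$, the factor $(b)_{|\boldsymbol y|}/(abq^{1-|\boldsymbol N|}/c)_{|\boldsymbol y|}$ becomes $(\alpha bc/q)_{|\boldsymbol m|}/(\alpha c)_{|\boldsymbol m|}$ (using $abq^{1-|\boldsymbol N|}/c=\alpha c$), and the remaining Vandermonde factor, $q$-shifted factorials and weight $q^{\sum rm_{r}}$ line up directly. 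The $\boldsymbol m$-sum then evaluates to $(bq^{-|\boldsymbol k|})_{|\boldsymbol k|}/(q^{1-|\boldsymbol k|}/\alpha c)_{|\boldsymbol k|}\cdot\prod_{i=1}^{n}(qx_{i}/c)_{k_{i}}/(\alpha bx_{i})_{k_{i}}$. A single application of \cite[eq.(I.8)]{GR} in the form $(bq^{-n})_{n}/(q^{1-n}/\alpha c)_{n}=(\alpha bc/q)^{n}(q/b)_{n}/(\alpha c)_{n}$ then rewrites the prefactor as $(q/b)_{|\boldsymbol k|}(\alpha bc/q)^{|\boldsymbol k|}/(\alpha c)_{|\boldsymbol k|}$; the extra factor $(\alpha bc/q)^{|\boldsymbol k|}$ is absorbed into the $|\boldsymbol k|$-weighted coefficient and is exactly what upgrades the convergence condition from $|a_{1}\cdots a_{n}/q^{n}|<1$ in Theorem \ref{t11-3} to $|\alpha a_{1}\cdots a_{n}bc/q^{1+n}|<1$ in the statement.

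The main obstacle I anticipate is bookkeeping rather than conceptual: aligning the parameters in Milne's identity \eqref{eq:2-2} with the specialized inner sum in every slot (especially the $A_{n}$ Vandermonde factor $\prod(1-q^{m_{r}-m_{s}}x_{r}/x_{s})/(1-x_{r}/x_{s})$ and the weight $q^{\sum rm_{r}}$), and then tracking the $q$-powers accurately through the inversion $(bq^{-n})_{n}/(q^{1-n}/\alpha c)_{n}$. Once those alignments are verified, \eqref{eq:12-5} drops out without further input.
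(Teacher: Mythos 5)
Your proposal is correct and follows exactly the paper's own route: specialize $\gamma=d$ and $\beta=bc/q$ in Theorem \ref{t11-3}, evaluate the inner $\boldsymbol{m}$-sum by Milne's identity \eqref{eq:2-2} with $N_{s}\mapsto k_{s}$, $a\mapsto\alpha q^{|\boldsymbol{k}|}$, $b\mapsto\alpha bc/q$, $c\mapsto\alpha b$, and rewrite the resulting prefactor via \cite[eq.(I.8)]{GR} to obtain $(q/b)_{|\boldsymbol{k}|}(\alpha bc/q)^{|\boldsymbol{k}|}/(\alpha c)_{|\boldsymbol{k}|}$. No gaps; the bookkeeping you flag is exactly what the paper carries out.
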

This identity is an $A_{n}$ extension of Rogers' non-terminating
$_{6}\phi_{5}$ summation formula \cite[eq.(II.20)]{GR}:
\begin{align*}
 & \text{}_{6}\phi_{5}\left(\begin{matrix}\alpha,q\sqrt{\alpha},-q\sqrt{\alpha},q/a,q/b,q/c\\
\sqrt{\alpha},-\sqrt{\alpha},\alpha a,\alpha b,\alpha c
\end{matrix};q,\frac{\alpha abc}{q^{2}}\right)\\
 & =\frac{(\alpha q,\alpha ab/q,\alpha ac/q,\alpha bc/q)_{\infty}}{(\alpha a,\alpha b,\alpha c,\alpha abc/q^{2})_{\infty}},
\end{align*}
where $|\alpha abc/q^{2}|<1$ and 
\[
\text{}_{r+1}\phi_{r}\left(\begin{matrix}a_{1},a_{2},\cdots,a_{r+1}\\
b_{1},b_{2},\cdots,b_{r}
\end{matrix};q,z\right):=\sum_{n=0}^{\infty}\frac{(a_{1},a_{2},\cdots,a_{r+1})_{n}}{(q,b_{1},b_{2},\cdots,b_{r})_{n}}z^{n}.
\]

Replacing $q/a_{r}$ by $a_{r}\:(r=1,2,\cdots,n),\:q/b$ by $b,\:q/c$
by $c$ in the identity \eqref{eq:12-5} of Theorem \ref{t11-5},
we have 
\[
\begin{aligned} & \frac{(\alpha q/bc,\alpha q/a_{1}\cdots a_{n}c)_{\infty}}{(\alpha q/c,\alpha q/a_{1}\cdots a_{n}bc)_{\infty}}\prod_{r=1}^{n}\frac{(\alpha qx_{r},\alpha qx_{r}/a_{r}b)_{\infty}}{(\alpha qx_{r}/b,\alpha qx_{r}/a_{r})_{\infty}}\\
 & =\sum_{{k_{r}\geq0\atop r=1,\cdots,n}}\prod_{1\leq r<s\leq n}\frac{1-q^{k_{r}-k_{s}}x_{r}/x_{s}}{1-x_{r}/x_{s}}\prod_{r,s=1}^{n}\frac{\left(a_{s}x_{r}/x_{s}\right)_{k_{r}}}{\left(qx_{r}/x_{s}\right)_{k_{r}}}\\
 & \times\prod_{r=1}^{n}\frac{1-\alpha x_{r}q^{k_{r}+|\boldsymbol{k}|}}{1-\alpha x_{r}}\prod_{r=1}^{n}\frac{\left(\alpha x_{r}\right)_{|\boldsymbol{k}|}}{\left(\alpha qx_{r}/a_{r}\right)_{|\boldsymbol{k}|}}q^{\sum_{r=1}^{n}(r-1)k_{r}}\\
 & \times\frac{(b)_{|\boldsymbol{k}|}}{(\alpha q/c)_{|\boldsymbol{k}|}}\prod_{r=1}^{n}\frac{(cx_{r})_{k_{r}}}{(\alpha qx_{r}/b)_{k_{r}}}\cdot\bigg(\frac{\alpha q}{a_{1}\cdots a_{n}bc}\bigg)^{|\boldsymbol{k}|}.
\end{aligned}
\]
This identity is equivalent to the $A_{n}$ nonterminating $\text{}_{6}\phi_{5}$
summation formula in \cite[Theorem A.3]{MN12}.

\subsection{An $A_{n}$ extension of Sylvester's identity}

The Euler Pentagonal Number Theorem may be one of the most interesting
formulas in basic hypergeometric series, which states that \cite[eq.(8.10.10)]{GR}
\begin{equation}
(q)_{\infty}=1+\sum_{n=1}^{\infty}(-1)^{n}(q^{n(3n-1)/2}+q^{n(3n+1)/2}).\label{eq:12-6}
\end{equation}
Sylvester gave the following beautiful refinement \cite[eq.(9.2.3)]{A76}
of Euler's Pentagonal Number Theorem.
\begin{thm}
\textup{(Sylvester's identity)} We have
\[
\frac{1}{(\alpha)_{\infty}}\sum_{k=0}^{\infty}(-\alpha)^{k}q^{k^{2}+{k \choose 2}}(1-\alpha q^{2k})\frac{(\alpha)_{k}}{(q)_{k}}=1.
\]
\end{thm}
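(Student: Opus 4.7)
The plan is to deduce Sylvester's identity as a limiting case of Rogers' non-terminating ${}_{6}\phi_{5}$ summation formula, which in turn is the $n=1$, $x_{1}=1$, $a_{1}=a$ specialization of Theorem \ref{t11-5} exhibited just after that theorem. Concretely, I would start from
\[
\sum_{k=0}^{\infty}\frac{(1-\alpha q^{2k})(\alpha,q/a,q/b,q/c)_{k}}{(1-\alpha)(q,\alpha a,\alpha b,\alpha c)_{k}}\left(\frac{\alpha abc}{q^{2}}\right)^{\!k}=\frac{(\alpha q,\alpha ab/q,\alpha ac/q,\alpha bc/q)_{\infty}}{(\alpha a,\alpha b,\alpha c,\alpha abc/q^{2})_{\infty}},
\]
valid whenever $|\alpha abc/q^{2}|<1$, and then let $a,b,c\to 0$ simultaneously.

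On the right-hand side, every $q$-shifted factorial other than $(\alpha q)_{\infty}$ has its argument tending to $0$, hence tends to $(0)_{\infty}=1$, so the right-hand side converges to $(\alpha q)_{\infty}$. On the left-hand side, $(\alpha a)_{k},(\alpha b)_{k},(\alpha c)_{k}\to 1$, while the elementary asymptotic $(q/x)_{k}\sim (-1)^{k}q^{k(k+1)/2}x^{-k}$ as $x\to 0$ combines with the weight $(\alpha abc/q^{2})^{k}$ to yield
\[
\frac{(q/a,q/b,q/c)_{k}}{(\alpha a,\alpha b,\alpha c)_{k}}\left(\frac{\alpha abc}{q^{2}}\right)^{\!k}\;\longrightarrow\;(-\alpha)^{k}q^{k^{2}+\binom{k}{2}},
\]
since $3\cdot k(k+1)/2-2k=(3k^{2}-k)/2=k^{2}+\binom{k}{2}$. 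Substituting this limit, Rogers' summation collapses to
\[
\sum_{k=0}^{\infty}\frac{(1-\alpha q^{2k})(\alpha)_{k}}{(1-\alpha)(q)_{k}}(-\alpha)^{k}q^{k^{2}+\binom{k}{2}}=(\alpha q)_{\infty},
\]
and multiplying by $1-\alpha$, using $(1-\alpha)(\alpha q)_{\infty}=(\alpha)_{\infty}$, and dividing by $(\alpha)_{\infty}$ gives exactly the claimed identity.

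The only delicate point is justifying the interchange of limit and infinite summation. For $|a|,|b|,|c|$ in a small neighborhood of $0$ the convergence condition $|\alpha abc/q^{2}|<1$ is automatically satisfied, and both sides of Rogers' formula are holomorphic in $(a,b,c)$ on that neighborhood (the series converges absolutely and uniformly on compacta); hence both sides extend continuously to $a=b=c=0$ and the equality persists. I do not anticipate any serious obstacle beyond this routine analytic-continuation argument.
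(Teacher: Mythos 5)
Your proof is correct, and it is essentially the same argument the paper uses: the paper proves the $A_{n}$ extension (Theorem \ref{t4-4}) by sending $a_{r},b,c\to 0$ in the $A_{n}$ Rogers' $_{6}\phi_{5}$ summation of Theorem \ref{t11-5} via $\lim_{a\to 0}(u/a)_{k}a^{k}=(-u)^{k}q^{\binom{k}{2}}$, and your argument is precisely the $n=1$ case of that computation. Your limit $(q/a,q/b,q/c)_{k}(\alpha abc/q^{2})^{k}\to(-\alpha)^{k}q^{k^{2}+\binom{k}{2}}$ and the final normalization $(1-\alpha)(\alpha q)_{\infty}=(\alpha)_{\infty}$ both check out, and the justification of the termwise limit is sound since $(q/a)_{k}a^{k}=\prod_{j=1}^{k}(a-q^{j})$ is polynomial in $a$.
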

In this subsection we will derive the following $A_{n}$ extension
of Sylvester's identity.
\begin{thm}
\label{t4-4} Suppose that none of the denominators in the following
identity vanishes, then
\[
\begin{aligned}1= & \frac{1}{\prod_{r=1}^{n}(\alpha x_{r})_{\infty}}\sum_{{k_{r}\geq0\atop r=1,\cdots,n}}\prod_{1\leq r<s\leq n}\frac{1-q^{k_{r}-k_{s}}x_{r}/x_{s}}{1-x_{r}/x_{s}}\prod_{r,s=1}^{n}\left(qx_{r}/x_{s}\right)_{k_{r}}^{-1}\\
 & \times\prod_{r=1}^{n}\{(1-\alpha x_{r}q^{k_{r}+|\boldsymbol{k}|})\left(\alpha x_{r}\right)_{|\boldsymbol{k}|}\}\prod_{r=1}^{n}x_{r}^{(n+1)k_{r}-|\boldsymbol{k}|}\\
 & \times(-1)^{n|\boldsymbol{k}|}\alpha^{|\boldsymbol{k}|}q^{(n+1)\sum_{r=1}^{n}{k_{r} \choose 2}+{|\boldsymbol{k}| \choose 2}+\sum_{r=1}^{n}rk_{r}}.
\end{aligned}
\]
\end{thm}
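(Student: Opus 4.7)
The plan is to derive Theorem \ref{t4-4} as the joint limit $a_1,\dots,a_n,b,c\to 0$ of the $A_n$ Rogers' ${}_6\phi_5$ summation \eqref{eq:12-5} from Theorem \ref{t11-5}, paralleling the classical derivation of Sylvester's identity from Rogers' non-terminating ${}_6\phi_5$ summation.

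First I would carry out an asymptotic analysis of the summand on the right-hand side of \eqref{eq:12-5}. As $a_s,b,c\to 0$, the factors in ``dividing position'' behave like
\[
(qx_r/(x_sa_s))_{k_r}\sim(-x_r/(x_sa_s))^{k_r}q^{\binom{k_r+1}{2}},\qquad (qx_r/c)_{k_r}\sim(-x_r/c)^{k_r}q^{\binom{k_r+1}{2}},\qquad (q/b)_{|\boldsymbol{k}|}\sim(-1/b)^{|\boldsymbol{k}|}q^{\binom{|\boldsymbol{k}|+1}{2}},
\]
while $(\alpha x_ra_r)_{|\boldsymbol{k}|}$, $(\alpha bx_r)_{k_r}$ and $(\alpha c)_{|\boldsymbol{k}|}$ all tend to $1$. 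The weight $(\alpha a_1\cdots a_nbc/q^{1+n})^{|\boldsymbol{k}|}$ is tuned precisely to cancel the diverging monomials in $a_s$, $b$ and $c$, leaving a finite limit that depends only on $\alpha$, $q$ and the $x_r$.

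Next I would collect the surviving exponents. Using $\binom{k_r+1}{2}=\binom{k_r}{2}+k_r$ and $\binom{|\boldsymbol{k}|+1}{2}=\binom{|\boldsymbol{k}|}{2}+|\boldsymbol{k}|$, the total power of $q$ should reduce to $(n+1)\sum_r\binom{k_r}{2}+\binom{|\boldsymbol{k}|}{2}+\sum_r rk_r$, the power of each $x_r$ should collapse to $(n+1)k_r-|\boldsymbol{k}|$, and the signs and $\alpha$-powers should combine to $(-1)^{n|\boldsymbol{k}|}\alpha^{|\boldsymbol{k}|}$. These are exactly the coefficients appearing in Theorem \ref{t4-4}. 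On the left-hand side of \eqref{eq:12-5}, every infinite product whose argument contains $a_r$, $b$ or $c$ tends to $1$, leaving only $\prod_{r=1}^n(\alpha qx_r)_\infty$; absorbing the $\prod_r(1-\alpha x_r)^{-1}$ from the summand denominators into this product yields $\prod_r(\alpha x_r)_\infty$, and dividing through by this factor puts the identity into the shape of Theorem \ref{t4-4}.

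The one technical point I expect to be non-routine is justifying the interchange of limit and infinite $\boldsymbol{k}$-sum. I would handle this by first specializing to the terminating case $a_r=q^{N_r}$, where the $\boldsymbol{k}$-sum is finite and the limits $b,c\to 0$ may be taken inside without scruples; then letting $N_r\to\infty$ and invoking analytic continuation within the convergence region $|\alpha a_1\cdots a_nbc/q^{1+n}|<1$ --- which is automatically satisfied as the arguments are driven to zero --- to transfer the identity to the non-terminating limit. Absolute convergence of the limiting series is guaranteed by the super-exponential decay factor $q^{(n+1)\sum_r\binom{k_r}{2}}$, so the remaining bookkeeping of powers of $-1$, $q$ and $x_r$ is entirely routine.
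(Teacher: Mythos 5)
Your proposal is correct and is essentially the paper's own proof: the author also obtains Theorem \ref{t4-4} by multiplying both sides of \eqref{eq:12-5} by $\prod_{r=1}^{n}(\alpha qx_{r})_{\infty}^{-1}$, letting $a_{r},b,c\rightarrow0$, and invoking $\lim_{a\rightarrow0}(u/a)_{k}a^{k}=(-u)^{k}q^{\binom{k}{2}}$. Your bookkeeping of the resulting powers of $q$, $x_r$, $-1$ and $\alpha$ checks out against the stated identity.
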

\begin{proof}
The result follows readily by multiplying both sides of the identity
in Theorem \ref{t11-5} by $\prod_{r=1}^{n}(\alpha qx_{r})_{\infty}^{-1},$
setting $a_{r}\rightarrow0\:(r=1,2,\cdots,n),b\rightarrow0,c\rightarrow0$
in the resulting identity and then using the limiting relation $\lim_{a\rightarrow0}(u/a)_{n}a^{n}=(-u)^{n}q^{{n \choose 2}}.$
\end{proof}
Actually, this $A_{n}$ extension is equivalent to the identity in
\cite[Theorem 5.19]{M94}. Replacing $a$ by $\alpha,$ $z_{i}/z_{n}$
by $x_{i}$ on both sides of \cite[Theorem 5.19]{M94} and then multiplying
both sides of the resulting identity by $\prod_{i=1}^{n}(\alpha qx_{i})_{\infty}^{-1}$
we can easily obtain the identity in Theorem \ref{t4-4}.

Take $\alpha=1$ in the identity of Theorem \ref{t4-4} and then multiply
both sides of the resulting identity by $\prod_{r=1}^{n}(qx_{r})_{\infty}.$
We get
\[
\begin{aligned}\prod_{r=1}^{n}(qx_{r})_{\infty}= & \frac{1}{\prod_{r=1}^{n}(1-x_{r})}\sum_{{k_{r}\geq0\atop r=1,\cdots,n}}\prod_{1\leq r<s\leq n}\frac{1-q^{k_{r}-k_{s}}x_{r}/x_{s}}{1-x_{r}/x_{s}}\prod_{r,s=1}^{n}\left(qx_{r}/x_{s}\right)_{k_{r}}^{-1}\\
 & \times\prod_{r=1}^{n}\{(1-x_{r}q^{k_{r}+|\boldsymbol{k}|})\left(x_{r}\right)_{|\boldsymbol{k}|}\}\prod_{r=1}^{n}x_{r}^{(n+1)k_{r}-|\boldsymbol{k}|}\\
 & \times(-1)^{n|\boldsymbol{k}|}q^{(n+1)\sum_{r=1}^{n}{k_{r} \choose 2}+{|\boldsymbol{k}| \choose 2}+\sum_{r=1}^{n}rk_{r}}.
\end{aligned}
\]
This identity can be regarded as an $A_{n}$ extension of the Euler
Pentagonal Number Theorem \eqref{eq:12-6}.

\section{\label{sec:2} Multiple expansion formulas for $(q)_{\infty}^{m},\text{\ensuremath{\pi_{q}}}$
and $1/\pi_{q}$}

In this section, we first employ the identity \eqref{eq:a1-17} in
Theorem \ref{th-1} to deduce some multiple expansion formulas for
$(q)_{\infty}^{m}$ and then apply the identity in Theorem \ref{t21-1}
to derive several multiple expansion formulas for $\pi_{q}$ and $1/\pi_{q}.$

\subsection{Multiple expansion formulas for $(q)_{\infty}^{m}$ }

For any integer $m,$ we will give some multiple expansion formulas
for $(q)_{\infty}^{m}.$ These formulas can be divided into two groups:
the first one involves $(q)_{\infty}^{m}$ with positive integer $m$
while the second one involves $(q)_{\infty}^{m}$ with non-positive
integer $m.$ All of these expansion formulas are $A_{n}$ extensions
of some well-known identities of Liu \cite{L1}.

The first group of multiple expansion formulas for $(q)_{\infty}^{m}$
is as follows.
\begin{thm}
\label{t5-1} Let $m$ be a nonnegative integer. Then
\begin{equation}
\begin{aligned}(q)_{\infty}^{m+1} & =\sum_{{k_{r}\geq0\atop r=1,2,\cdots,n}}\prod_{1\leq r<s\leq n}\frac{1-q^{k_{r}-k_{s}}x_{r}/x_{s}}{1-x_{r}/x_{s}}\cdot\frac{(1-q^{2|\boldsymbol{k}|+1})(q)_{|\boldsymbol{k}|}}{\prod_{r,s=1}^{n}(qx_{r}/x_{s})_{k_{r}}}\\
 & \times(-1)^{n|\boldsymbol{k}|}q^{\sum_{r=1}^{n}(r-1)k_{r}+n\sum_{i=1}^{n}{k_{i} \choose 2}}\prod_{r=1}^{n}x_{r}^{nk_{r}-|\boldsymbol{k}|}\\
 & \;\times\sum_{{0\leq m_{r}\leq k_{r}\atop r=1,2,\cdots,n}}\prod_{1\leq r<s\leq n}\frac{1-q^{m_{r}-m_{s}}x_{r}/x_{s}}{1-x_{r}/x_{s}}\\
 & \quad\times\prod_{r,s=1}^{n}\frac{(q^{-k_{s}}x_{r}/x_{s})_{m_{r}}}{(qx_{r}/x_{s})_{m_{r}}}\cdot(q^{|\boldsymbol{k}|+1})_{|\boldsymbol{m}|}(q)_{|\boldsymbol{m}|}^{m}q^{\sum_{r=1}^{n}rm_{r}}
\end{aligned}
\label{eq:21-20}
\end{equation}
and 
\begin{equation}
\begin{aligned}(q)_{\infty}^{m+1} & =1+\sum_{{k_{1},k_{2},\cdots,k_{n}\geq0\atop |\boldsymbol{k}|\geq1}}\prod_{1\leq r<s\leq n}\frac{1-q^{k_{r}-k_{s}}x_{r}/x_{s}}{1-x_{r}/x_{s}}\cdot\frac{(1-q^{2|\boldsymbol{k}|})(q)_{|\boldsymbol{k}|-1}}{\prod_{r,s=1}^{n}(qx_{r}/x_{s})_{k_{r}}}\\
 & \times(-1)^{n|\boldsymbol{k}|}q^{\sum_{r=1}^{n}(r-1)k_{r}+n\sum_{i=1}^{n}{k_{i} \choose 2}}\prod_{r=1}^{n}x_{r}^{nk_{r}-|\boldsymbol{k}|}\\
 & \;\times\sum_{{0\leq m_{r}\leq k_{r}\atop r=1,2,\cdots,n}}\prod_{1\leq r<s\leq n}\frac{1-q^{m_{r}-m_{s}}x_{r}/x_{s}}{1-x_{r}/x_{s}}\\
 & \quad\times\prod_{r,s=1}^{n}\frac{(q^{-k_{s}}x_{r}/x_{s})_{m_{r}}}{(qx_{r}/x_{s})_{m_{r}}}\cdot(q^{|\boldsymbol{k}|})_{|\boldsymbol{m}|}(q)_{|\boldsymbol{m}|}^{m}\cdot q^{\sum_{r=1}^{n}rm_{r}}.
\end{aligned}
\label{eq:21-19}
\end{equation}
\end{thm}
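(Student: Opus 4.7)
The plan is to deduce both \eqref{eq:21-20} and \eqref{eq:21-19} from the First $A_{n}$ extension of Liu's formula, namely identity \eqref{eq:a1-17} in Theorem \ref{th-1}, by a single choice of $f$ followed by suitable limits of the parameters $\alpha$, $b$, $a_{1},\dots,a_{n}$.

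The first step is to set $f(x)=(q)_{\infty}^{m}/(x/\alpha;q)_{\infty}^{m}$. This gives $f(0)=(q)_{\infty}^{m}$ and $f(\alpha q^{1+|\boldsymbol{m}|})=(q)_{|\boldsymbol{m}|}^{m}$ uniformly in $\alpha$, which supplies the factor $(q)_{|\boldsymbol{m}|}^{m}$ present in the inner sum of both targets. I then let $a_{r}\to 0$ for each $r$ and $b\to 0$ in \eqref{eq:a1-17}. The delicate step is combining the diverging product $\prod_{r,s}(qx_{r}/x_{s}a_{s})_{k_{r}}$ with the compensating factor $(a_{1}\cdots a_{n})^{|\boldsymbol{k}|}$. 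Applying the standard limit $\lim_{a\to 0}(u/a;q)_{n}a^{n}=(-u)^{n}q^{\binom{n}{2}}$ coordinatewise and carefully counting powers of $x_{r}$, $x_{s}^{-1}$, $a_{s}^{-1}$ and $q$, one obtains
\[
(a_{1}\cdots a_{n})^{|\boldsymbol{k}|}\prod_{r,s=1}^{n}(qx_{r}/x_{s}a_{s})_{k_{r}}\longrightarrow (-1)^{n|\boldsymbol{k}|}\,q^{\,n|\boldsymbol{k}|+n\sum_{r}\binom{k_{r}}{2}}\prod_{r=1}^{n}x_{r}^{nk_{r}-|\boldsymbol{k}|}.
\]
The explicit $q^{-n|\boldsymbol{k}|}$ in the outer prefactor cancels the $q^{n|\boldsymbol{k}|}$ produced, leaving precisely the combination $q^{\sum(r-1)k_{r}+n\sum_{r}\binom{k_{r}}{2}}\prod_{r}x_{r}^{nk_{r}-|\boldsymbol{k}|}$ appearing in both \eqref{eq:21-20} and \eqref{eq:21-19}. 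Simultaneously, $(\alpha a_{1}\cdots a_{n}q^{1-n})_{|\boldsymbol{k}|}\to 1$ and $(\alpha b)_{|\boldsymbol{m}|}\to 1$, so the inner-sum ratio $(\alpha q^{|\boldsymbol{k}|})_{|\boldsymbol{m}|}/(\alpha b)_{|\boldsymbol{m}|}$ collapses to $(\alpha q^{|\boldsymbol{k}|})_{|\boldsymbol{m}|}$.

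The two identities are then obtained by distinguishing the value of $\alpha$. Choosing $\alpha=q$ converts the outer prefactor $(1-\alpha q^{2|\boldsymbol{k}|})(\alpha)_{|\boldsymbol{k}|}/(1-\alpha)$ into $(1-q^{2|\boldsymbol{k}|+1})(q)_{|\boldsymbol{k}|}/(1-q)$ and the inner prefactor into $(q^{|\boldsymbol{k}|+1})_{|\boldsymbol{m}|}$; the left side becomes $(q)_{\infty}^{m}(q^{2})_{\infty}=(q)_{\infty}^{m+1}/(1-q)$, and \eqref{eq:21-20} follows after multiplying through by $1-q$. Choosing $\alpha=1$ is to be interpreted as a limit: the $|\boldsymbol{k}|=0$ contribution evaluates to $1$ (matching the ``$1+\cdots$'' structure), and for $|\boldsymbol{k}|\ge 1$ the ratio $(1-\alpha q^{2|\boldsymbol{k}|})(\alpha)_{|\boldsymbol{k}|}/(1-\alpha)$ tends to $(1-q^{2|\boldsymbol{k}|})(q)_{|\boldsymbol{k}|-1}$, while the inner prefactor becomes $(q^{|\boldsymbol{k}|})_{|\boldsymbol{m}|}$ (which automatically kills the $|\boldsymbol{k}|=0,|\boldsymbol{m}|\ge 1$ inner terms). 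The left side is directly $(q)_{\infty}^{m}(q)_{\infty}=(q)_{\infty}^{m+1}$, yielding \eqref{eq:21-19}. The principal obstacle is the careful multi-index bookkeeping of the $a_{r}\to 0$ limits in $\prod_{r,s}(qx_{r}/x_{s}a_{s})_{k_{r}}$, ensuring that signs, $q$-powers and monomials in $x_{r}$ assemble exactly as advertised; the rest is formal manipulation of $q$-shifted factorials.
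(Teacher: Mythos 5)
Your proposal is correct and follows essentially the same route as the paper: both start from \eqref{eq:a1-17}, choose $f$ to be a ratio of infinite products so that $f(\alpha q^{1+|\boldsymbol{m}|})=(q)_{|\boldsymbol{m}|}^{m}$, send $a_{r}\to0$ and $b\to0$ using $\lim_{a\to0}(u/a)_{k}a^{k}=(-u)^{k}q^{\binom{k}{2}}$, and then specialize $\alpha=q$ and $\alpha\to1$. The only cosmetic difference is that you absorb the parameters $c_{j}$ of the paper's $f(y)=\prod_{j}(b_{j}y/q)_{\infty}/(c_{j}y/q)_{\infty}$ directly into an $\alpha$-dependent choice $f(x)=(q)_{\infty}^{m}/(x/\alpha)_{\infty}^{m}$, which changes nothing of substance.
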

\noindent{\it Proof.}  If 
\[
f(y)=\prod_{j=1}^{m}\frac{(b_{j}y/q)_{\infty}}{(c_{j}y/q)_{\infty}},
\]
then
\begin{equation}
f(\alpha a_{1}\cdots a_{n}q^{1-n})=\prod_{j=1}^{m}\frac{(\alpha a_{1}\cdots a_{n}b_{j}/q^{n})_{\infty}}{(\alpha a_{1}\cdots a_{n}c_{j}/q^{n})_{\infty}},\label{eq:21-1}
\end{equation}
and
\begin{equation}
f(\alpha q^{1+|\boldsymbol{m}|})=\prod_{j=1}^{m}\frac{(\alpha b_{j}q^{|\boldsymbol{m}|})_{\infty}}{(\alpha c_{j}q^{|\boldsymbol{m}|})_{\infty}}=\prod_{j=1}^{m}\frac{(\alpha b_{j})_{\infty}}{(\alpha c_{j})_{\infty}}\prod_{j=1}^{m}\frac{(\alpha c_{j})_{|\boldsymbol{m}|}}{(\alpha b_{j})_{|\boldsymbol{m}|}}.\label{eq:21-2}
\end{equation}
Setting 
\[
f(y)=\prod_{j=1}^{m}\frac{(b_{j}y/q)_{\infty}}{(c_{j}y/q)_{\infty}}
\]
in the identity \eqref{eq:a1-17} of Theorem \ref{th-1}, substituting
\eqref{eq:21-1} and \eqref{eq:21-2} into the resulting identity
and then taking $a_{1}=a_{2}=\cdots=a_{n}=0$ we get
\begin{equation}
\begin{aligned} & \frac{(\alpha q)_{\infty}}{(\alpha b)_{\infty}}\prod_{j=1}^{m}\frac{(\alpha c_{j})_{\infty}}{(\alpha b_{j})_{\infty}}\\
 & =\sum_{{k_{r}\geq0\atop r=1,2,\cdots,n}}\prod_{1\leq r<s\leq n}\frac{1-q^{k_{r}-k_{s}}x_{r}/x_{s}}{1-x_{r}/x_{s}}\cdot\frac{(1-\alpha q^{2|\boldsymbol{k}|})(\alpha)_{|\boldsymbol{k}|}}{(1-\alpha)\prod_{r,s=1}^{n}(qx_{r}/x_{s})_{k_{r}}}\\
 & \times(-1)^{n|\boldsymbol{k}|}q^{\sum_{r=1}^{n}(r-1)k_{r}+n\sum_{i=1}^{n}{k_{i} \choose 2}}\prod_{r=1}^{n}x_{r}^{nk_{r}-|\boldsymbol{k}|}\\
 & \;\times\sum_{{0\leq m_{r}\leq k_{r}\atop r=1,2,\cdots,n}}\prod_{1\leq r<s\leq n}\frac{1-q^{m_{r}-m_{s}}x_{r}/x_{s}}{1-x_{r}/x_{s}}\prod_{r,s=1}^{n}\frac{(q^{-k_{s}}x_{r}/x_{s})_{m_{r}}}{(qx_{r}/x_{s})_{m_{r}}}\\
 & \quad\times\frac{(\alpha q^{|\boldsymbol{k}|})_{|\boldsymbol{m}|}}{(\alpha b)_{|\boldsymbol{m}|}}\prod_{j=1}^{m}\frac{(\alpha c_{j})_{|\boldsymbol{m}|}}{(\alpha b_{j})_{|\boldsymbol{m}|}}q^{\sum_{r=1}^{n}rm_{r}}.
\end{aligned}
\label{eq:21-3}
\end{equation}
The $b=b_{1}=\cdots=b_{m}=0$ case of \eqref{eq:21-3} is

\begin{equation}
\begin{aligned} & (\alpha q)_{\infty}\prod_{j=1}^{m}(\alpha c_{j})_{\infty}\\
 & =\sum_{{k_{r}\geq0\atop r=1,2,\cdots,n}}\prod_{1\leq r<s\leq n}\frac{1-q^{k_{r}-k_{s}}x_{r}/x_{s}}{1-x_{r}/x_{s}}\cdot\frac{(1-\alpha q^{2|\boldsymbol{k}|})(\alpha)_{|\boldsymbol{k}|}}{(1-\alpha)\prod_{r,s=1}^{n}(qx_{r}/x_{s})_{k_{r}}}\\
 & \times(-1)^{n|\boldsymbol{k}|}q^{\sum_{r=1}^{n}(r-1)k_{r}+n\sum_{i=1}^{n}{k_{i} \choose 2}}\prod_{r=1}^{n}x_{r}^{nk_{r}-|\boldsymbol{k}|}\\
 & \;\times\sum_{{0\leq m_{r}\leq k_{r}\atop r=1,2,\cdots,n}}\prod_{1\leq r<s\leq n}\frac{1-q^{m_{r}-m_{s}}x_{r}/x_{s}}{1-x_{r}/x_{s}}\prod_{r,s=1}^{n}\frac{(q^{-k_{s}}x_{r}/x_{s})_{m_{r}}}{(qx_{r}/x_{s})_{m_{r}}}\\
 & \quad\times(\alpha q^{|\boldsymbol{k}|})_{|\boldsymbol{m}|}\prod_{j=1}^{m}(\alpha c_{j})_{|\boldsymbol{m}|}\cdot q^{\sum_{r=1}^{n}rm_{r}}.
\end{aligned}
\label{eq:tt1-6-1}
\end{equation}
Then the identities \eqref{eq:21-20} and \eqref{eq:21-19} follow
by setting $\alpha=q,c_{1}=c_{2}=\cdots=c_{m}=1$ and $\alpha\rightarrow1,c_{1}=c_{2}=\cdots=c_{m}=q$
in \eqref{eq:tt1-6-1} respectively and then simplifying. \qed

The identity \eqref{eq:21-20} in Theorem \ref{t5-1} is an $A_{n}$
extension of the well-known identity in \cite[Proposition 3.1]{L1}
while \eqref{eq:21-19} can be regarded as an $A_{n}$ extension of
the identity in \cite[Proposition 3.2]{L1}. The identities \eqref{eq:1-20}
and \eqref{eq:1-21} are the special $m=2$ case of \eqref{eq:21-20}
and \eqref{eq:21-19} respectively.

The second group of multiple expansion formulas for $(q)_{\infty}^{m}$
is showed in the following theorem.
\begin{thm}
\label{t5-2} Let $m$ be a nonnegative integer. Then
\begin{equation}
\begin{aligned}\frac{1}{(q)_{\infty}^{m}} & =\sum_{{k_{r}\geq0\atop r=1,2,\cdots,n}}\prod_{1\leq r<s\leq n}\frac{1-q^{k_{r}-k_{s}}x_{r}/x_{s}}{1-x_{r}/x_{s}}\cdot\frac{(1-q^{2|\boldsymbol{k}|+1})(q)_{|\boldsymbol{k}|}}{\prod_{r,s=1}^{n}(qx_{r}/x_{s})_{k_{r}}}\\
 & \times(-1)^{n|\boldsymbol{k}|}q^{\sum_{r=1}^{n}(r-1)k_{r}+n\sum_{i=1}^{n}{k_{i} \choose 2}}\prod_{r=1}^{n}x_{r}^{nk_{r}-|\boldsymbol{k}|}\\
 & \;\times\sum_{{0\leq m_{r}\leq k_{r}\atop r=1,2,\cdots,n}}\prod_{1\leq r<s\leq n}\frac{1-q^{m_{r}-m_{s}}x_{r}/x_{s}}{1-x_{r}/x_{s}}\\
 & \quad\times\prod_{r,s=1}^{n}\frac{(q^{-k_{s}}x_{r}/x_{s})_{m_{r}}}{(qx_{r}/x_{s})_{m_{r}}}\cdot\frac{(q^{|\boldsymbol{k}|+1})_{|\boldsymbol{m}|}}{(q)_{|\boldsymbol{m}|}^{m+1}}q^{\sum_{r=1}^{n}rm_{r}},
\end{aligned}
\label{eq:21-18}
\end{equation}
and
\begin{equation}
\begin{aligned}\frac{1}{(q)_{\infty}^{m}} & =1+\sum_{{k_{1},k_{2},\cdots,k_{n}\geq0\atop |\boldsymbol{k}|\geq1}}\prod_{1\leq r<s\leq n}\frac{1-q^{k_{r}-k_{s}}x_{r}/x_{s}}{1-x_{r}/x_{s}}\cdot\frac{(1-q^{2|\boldsymbol{k}|})(q)_{|\boldsymbol{k}|-1}}{\prod_{r,s=1}^{n}(qx_{r}/x_{s})_{k_{r}}}\\
 & \times(-1)^{n|\boldsymbol{k}|}q^{\sum_{r=1}^{n}(r-1)k_{r}+n\sum_{i=1}^{n}{k_{i} \choose 2}}\prod_{r=1}^{n}x_{r}^{nk_{r}-|\boldsymbol{k}|}\\
 & \;\times\sum_{{0\leq m_{r}\leq k_{r}\atop r=1,2,\cdots,n}}\prod_{1\leq r<s\leq n}\frac{1-q^{m_{r}-m_{s}}x_{r}/x_{s}}{1-x_{r}/x_{s}}
\end{aligned}
\label{eq:21-17}
\end{equation}
\[
\times\prod_{r,s=1}^{n}\frac{(q^{-k_{s}}x_{r}/x_{s})_{m_{r}}}{(qx_{r}/x_{s})_{m_{r}}}\cdot\frac{(q^{|\boldsymbol{k}|})_{|\boldsymbol{m}|}}{(q)_{|\boldsymbol{m}|}^{m+1}}q^{\sum_{r=1}^{n}rm_{r}}.
\]
\end{thm}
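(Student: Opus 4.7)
The plan is to mirror the strategy used for Theorem \ref{t5-1} exactly, specializing the free parameters in the intermediate identity \eqref{eq:21-3} differently so that the infinite product $(q)_{\infty}^{m}$ now appears in a denominator on the left-hand side rather than a numerator. Recall that \eqref{eq:21-3} was obtained in the proof of Theorem \ref{t5-1} by substituting $f(y)=\prod_{j=1}^{m}(b_{j}y/q)_{\infty}/(c_{j}y/q)_{\infty}$ in \eqref{eq:a1-17} and setting $a_{1}=\cdots=a_{n}=0$; we take it as the starting point and make a different parameter choice than the authors did there.

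First, specialize $c_{1}=c_{2}=\cdots=c_{m}=0$ in \eqref{eq:21-3}, which removes every factor $(\alpha c_{j})_{\infty}$ and $(\alpha c_{j})_{|\boldsymbol{m}|}$. This leaves an identity whose left-hand side is $(\alpha q)_{\infty}/[(\alpha b)_{\infty}\prod_{j=1}^{m}(\alpha b_{j})_{\infty}]$ and whose inner-sum coefficient is $(\alpha q^{|\boldsymbol{k}|})_{|\boldsymbol{m}|}/[(\alpha b)_{|\boldsymbol{m}|}\prod_{j=1}^{m}(\alpha b_{j})_{|\boldsymbol{m}|}]$, the remaining $\boldsymbol{k}$- and $\boldsymbol{m}$-dependent factors being carried over verbatim from \eqref{eq:21-3}. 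To derive \eqref{eq:21-18}, set $\alpha=q$ and $b=b_{1}=\cdots=b_{m}=1$: the left-hand side collapses to $(q^{2})_{\infty}/(q)_{\infty}^{m+1}=1/[(1-q)(q)_{\infty}^{m}]$; the outer prefactor $(1-\alpha q^{2|\boldsymbol{k}|})(\alpha)_{|\boldsymbol{k}|}/(1-\alpha)$ becomes $(1-q^{2|\boldsymbol{k}|+1})(q)_{|\boldsymbol{k}|}/(1-q)$; and the inner-sum ratio becomes $(q^{|\boldsymbol{k}|+1})_{|\boldsymbol{m}|}/(q)_{|\boldsymbol{m}|}^{m+1}$. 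Multiplying both sides by $1-q$ reproduces \eqref{eq:21-18} exactly.

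To derive \eqref{eq:21-17}, instead set $\alpha=1$ and $b=b_{1}=\cdots=b_{m}=q$. The left-hand side becomes $(q)_{\infty}/(q)_{\infty}^{m+1}=1/(q)_{\infty}^{m}$, and the inner-sum ratio becomes $(q^{|\boldsymbol{k}|})_{|\boldsymbol{m}|}/(q)_{|\boldsymbol{m}|}^{m+1}$. The apparent $0/0$ in the outer prefactor $(1-\alpha q^{2|\boldsymbol{k}|})(\alpha)_{|\boldsymbol{k}|}/(1-\alpha)$ at $\alpha=1$ is only formal: for $|\boldsymbol{k}|=0$ the factor $(1-\alpha q^{0})=(1-\alpha)$ cancels the denominator, producing the isolated contribution $1$ (the inner sum forces $\boldsymbol{m}=\boldsymbol{0}$); for $|\boldsymbol{k}|\ge 1$ one uses $(\alpha)_{|\boldsymbol{k}|}=(1-\alpha)(q\alpha)_{|\boldsymbol{k}|-1}$, and passing to $\alpha=1$ gives the stated coefficient $(1-q^{2|\boldsymbol{k}|})(q)_{|\boldsymbol{k}|-1}$. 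Reassembling the $|\boldsymbol{k}|=0$ contribution with the sum over $|\boldsymbol{k}|\ge 1$ yields precisely \eqref{eq:21-17}.

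The only point requiring any care is the $\alpha\to 1$ specialization needed for \eqref{eq:21-17}, but this is handled cleanly by the standard factorization $(\alpha)_{|\boldsymbol{k}|}=(1-\alpha)(q\alpha)_{|\boldsymbol{k}|-1}$ together with the isolation of the $|\boldsymbol{k}|=0$ term, both of which make the identity regular at $\alpha=1$. The remainder of the argument is routine bookkeeping of $q$-shifted factorials, entirely parallel to the proof of Theorem \ref{t5-1}.
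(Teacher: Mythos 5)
Your proposal is correct and follows exactly the paper's own route: specialize $c_{1}=\cdots=c_{m}=0$ in \eqref{eq:21-3} to get the intermediate identity, then take $\alpha=q,\ b=b_{1}=\cdots=b_{m}=1$ for \eqref{eq:21-18} and $\alpha\rightarrow1,\ b=b_{1}=\cdots=b_{m}=q$ for \eqref{eq:21-17}. The only difference is that you spell out the $\alpha\to1$ limit via $(\alpha)_{|\boldsymbol{k}|}=(1-\alpha)(\alpha q)_{|\boldsymbol{k}|-1}$ and the isolation of the $|\boldsymbol{k}|=0$ term, which the paper leaves implicit.
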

\noindent{\it Proof.} Putting $c_{1}=\cdots=c_{m}=0$ in \eqref{eq:21-3}
we arrive at

\begin{equation}
\begin{aligned} & \frac{(\alpha q)_{\infty}}{(\alpha b)_{\infty}\prod_{j=1}^{m}(\alpha b_{j})_{\infty}}\\
 & =\sum_{{k_{r}\geq0\atop r=1,2,\cdots,n}}\prod_{1\leq r<s\leq n}\frac{1-q^{k_{r}-k_{s}}x_{r}/x_{s}}{1-x_{r}/x_{s}}\cdot\frac{(1-\alpha q^{2|\boldsymbol{k}|})(\alpha)_{|\boldsymbol{k}|}}{(1-\alpha)\prod_{r,s=1}^{n}(qx_{r}/x_{s})_{k_{r}}}\\
 & \times(-1)^{n|\boldsymbol{k}|}q^{\sum_{r=1}^{n}(r-1)k_{r}+n\sum_{i=1}^{n}{k_{i} \choose 2}}\prod_{r=1}^{n}x_{r}^{nk_{r}-|\boldsymbol{k}|}\\
 & \;\times\sum_{{0\leq m_{r}\leq k_{r}\atop r=1,2,\cdots,n}}\prod_{1\leq r<s\leq n}\frac{1-q^{m_{r}-m_{s}}x_{r}/x_{s}}{1-x_{r}/x_{s}}\prod_{r,s=1}^{n}\frac{(q^{-k_{s}}x_{r}/x_{s})_{m_{r}}}{(qx_{r}/x_{s})_{m_{r}}}\\
 & \quad\times\frac{(\alpha q^{|\boldsymbol{k}|})_{|\boldsymbol{m}|}}{(\alpha b)_{|\boldsymbol{m}|}\prod_{j=1}^{m}(\alpha b_{j})_{|\boldsymbol{m}|}}q^{\sum_{r=1}^{n}rm_{r}}.
\end{aligned}
\label{eq:tt1-7-1}
\end{equation}
We take $\alpha=q,b=b_{1}=\cdots=b_{m}=1$ and $\alpha\rightarrow1,b=b_{1}=\cdots=b_{m}=q$
in \eqref{eq:tt1-7-1} to obtain \eqref{eq:21-18} and \eqref{eq:21-17}
respectively. \qed

The identity \eqref{eq:21-18} is an $A_{n}$ extension of the identity
in \cite[Proposition 3.3]{L1} while \eqref{eq:21-17} can be considered
as an $A_{n}$ extension of the identity in \cite[Proposition 3.4]{L1}.
The identity \eqref{eq:1-23} is the special $m=2$ case of  \eqref{eq:21-17}.

\subsection{Multiple expansion formulas for $\pi_{q}$ and $1/\pi_{q}$}

In \cite{GR}, W. Gosper introduced a $q$-analogue of the irrational
number $\pi:$
\[
\pi_{q}:=(1-q^{2})q^{1/4}\dfrac{(q^{2};q^{2})_{\infty}^{2}}{(q;q^{2})_{\infty}^{2}}.
\]
It satisfies the following limiting relation:
\[
\lim_{q\rightarrow1}\pi_{q}=\pi.
\]
In this subsection, we give some multiple expansion formulas for $\pi_{q}$
and $1/\pi_{q}.$
\begin{thm}
We have
\[
\begin{aligned}\pi_{q} & =(1-q^{2})\sum_{{k_{r}\geq0\atop r=1,2,\cdots,n}}\prod_{1\leq r<s\leq n}\frac{1-q^{2k_{r}-2k_{s}}x_{r}/x_{s}}{1-x_{r}/x_{s}}\prod_{r=1}^{n}x_{r}^{nk_{r}-|\boldsymbol{k}|}\\
 & \times\frac{(1-q^{4|\boldsymbol{k}|+2})(q^{2};q^{2})_{|\boldsymbol{k}|}}{\prod_{r,s=1}^{n}(q^{2}x_{r}/x_{s};q^{2})_{k_{r}}}\cdot(-1)^{n|\boldsymbol{k}|}q^{2\sum_{r=1}^{n}(r-1)k_{r}+2n\sum_{r=1}^{n}{k_{r} \choose 2}+\frac{1}{4}}\\
 & \;\times\sum_{{0\leq m_{r}\leq k_{r}\atop r=1,2,\cdots,n}}\prod_{1\leq r<s\leq n}\frac{1-q^{2m_{r}-2m_{s}}x_{r}/x_{s}}{1-x_{r}/x_{s}}\\
 & \quad\times\prod_{r,s=1}^{n}\frac{(q^{-2k_{s}}x_{r}/x_{s};q^{2})_{m_{r}}}{(q^{2}x_{r}/x_{s};q^{2})_{m_{r}}}\cdot\frac{(q^{2|\boldsymbol{k}|+2},q^{2};q^{2})_{|\boldsymbol{m}|}}{(q,q;q^{2})_{|\boldsymbol{m}|}}q^{2\sum_{r=1}^{n}rm_{r}},
\end{aligned}
\]
and 
\[
\begin{aligned}\pi_{q} & =(1-q^{2})q^{1/4}\\
 & +(1-q^{2})\sum_{{k_{r}\geq0\atop |\boldsymbol{k}|\geq1}}\prod_{1\leq r<s\leq n}\frac{1-q^{2k_{r}-2k_{s}}x_{r}/x_{s}}{1-x_{r}/x_{s}}\cdot\frac{(1-q^{4|\boldsymbol{k}|})(q^{2};q^{2})_{|\boldsymbol{k}|-1}}{\prod_{r,s=1}^{n}(q^{2}x_{r}/x_{s};q^{2})_{k_{r}}}\\
 & \times(-1)^{n|\boldsymbol{k}|}q^{2\sum_{r=1}^{n}(r-1)k_{r}+2n\sum_{r=1}^{n}{k_{r} \choose 2}+\frac{1}{4}}\prod_{r=1}^{n}x_{r}^{nk_{r}-|\boldsymbol{k}|}\\
 & \;\times\sum_{{0\leq m_{r}\leq k_{r}\atop r=1,2,\cdots,n}}\prod_{1\leq r<s\leq n}\frac{1-q^{2m_{r}-2m_{s}}x_{r}/x_{s}}{1-x_{r}/x_{s}}\\
 & \quad\times\prod_{r,s=1}^{n}\frac{(q^{-2k_{s}}x_{r}/x_{s};q^{2})_{m_{r}}}{(q^{2}x_{r}/x_{s};q^{2})_{m_{r}}}\cdot\frac{(q^{2|\boldsymbol{k}|},q^{2};q^{2})_{|\boldsymbol{m}|}}{(q,q;q^{2})_{|\boldsymbol{m}|}}q^{2\sum_{r=1}^{n}rm_{r}}.
\end{aligned}
\]
\end{thm}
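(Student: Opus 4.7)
My plan is to derive both multiple expansion formulas from the intermediate identity \eqref{eq:21-3}, established in the proof of Theorem \ref{t5-1}, with the base $q$ replaced by $q^{2}$ and with a judicious choice of the free parameters $\alpha,b,c_{1},b_{1}$. The key numerical observation is that the definition of $\pi_{q}$ gives
\[
\frac{(q^{2};q^{2})_{\infty}^{2}}{(q;q^{2})_{\infty}^{2}}=\frac{\pi_{q}}{(1-q^{2})q^{1/4}},
\]
so any specialization whose left-hand side of \eqref{eq:21-3} collapses to a constant multiple of this ratio will isolate $\pi_{q}$ on the left.

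For the first formula I would take $m=1$, $\alpha=q^{2}$, $b=b_{1}=q^{-1}$, $c_{1}=1$ in the $q\mapsto q^{2}$ version of \eqref{eq:21-3}, so that the left-hand side collapses to
\[
\frac{(q^{4};q^{2})_{\infty}}{(q;q^{2})_{\infty}}\cdot\frac{(q^{2};q^{2})_{\infty}}{(q;q^{2})_{\infty}}=\frac{(q^{2};q^{2})_{\infty}^{2}}{(1-q^{2})(q;q^{2})_{\infty}^{2}}=\frac{\pi_{q}}{(1-q^{2})^{2}q^{1/4}}.
\]
Multiplying through by $(1-q^{2})^{2}q^{1/4}$ puts $\pi_{q}$ on the left; on the right one factor of $(1-q^{2})$ is absorbed by the $(1-\alpha)^{-1}=(1-q^{2})^{-1}$ appearing in the outer coefficient, leaving the overall prefactor $(1-q^{2})$ claimed in the theorem, while the remaining $q^{1/4}$ is merged into the $q$-exponent of the inner $\boldsymbol{m}$-sum. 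One then checks that $(\alpha q^{|\boldsymbol{k}|};q^{2})_{|\boldsymbol{m}|}=(q^{2|\boldsymbol{k}|+2};q^{2})_{|\boldsymbol{m}|}$ and $(\alpha c_{1};q^{2})_{|\boldsymbol{m}|}=(q^{2};q^{2})_{|\boldsymbol{m}|}$ supply the numerator $(q^{2|\boldsymbol{k}|+2},q^{2};q^{2})_{|\boldsymbol{m}|}$, while $(\alpha b;q^{2})_{|\boldsymbol{m}|}=(\alpha b_{1};q^{2})_{|\boldsymbol{m}|}=(q;q^{2})_{|\boldsymbol{m}|}$ supplies the denominator $(q,q;q^{2})_{|\boldsymbol{m}|}$, matching the target inner summand.

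For the second formula I would take the confluent limit $\alpha\to1$ in the same base-$q^{2}$ version of \eqref{eq:21-3}, now with $m=1$, $b=b_{1}=q$, $c_{1}=q^{2}$; the left-hand side becomes $(q^{2};q^{2})_{\infty}^{2}/(q;q^{2})_{\infty}^{2}=\pi_{q}/[(1-q^{2})q^{1/4}]$. The delicate point is the outer factor $(1-\alpha q^{4|\boldsymbol{k}|})(\alpha;q^{2})_{|\boldsymbol{k}|}/(1-\alpha)$ on the right. At $|\boldsymbol{k}|=0$ it tends to $1$ and the remaining $\boldsymbol{k}$-dependent factors are trivial, so multiplying through by $(1-q^{2})q^{1/4}$ produces precisely the isolated summand $(1-q^{2})q^{1/4}$; for $|\boldsymbol{k}|\ge 1$ the standard identity $(\alpha;q^{2})_{|\boldsymbol{k}|}=(1-\alpha)(q^{2}\alpha;q^{2})_{|\boldsymbol{k}|-1}$ cancels the $(1-\alpha)^{-1}$ and, in the limit, leaves $(1-q^{4|\boldsymbol{k}|})(q^{2};q^{2})_{|\boldsymbol{k}|-1}$, matching the factor claimed in the theorem. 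Once again $q^{1/4}$ is absorbed into the inner $q$-exponent, and the specialization of the other Pochhammers in $\boldsymbol{m}$ produces the displayed $(q^{2|\boldsymbol{k}|},q^{2};q^{2})_{|\boldsymbol{m}|}/(q,q;q^{2})_{|\boldsymbol{m}|}$.

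The main obstacle is careful bookkeeping rather than any new conceptual ingredient: one must verify that the $q$-exponents produced by $q\mapsto q^{2}$ (namely $2\sum_{r=1}^{n}(r-1)k_{r}$, $2n\sum_{r=1}^{n}\binom{k_{r}}{2}$ and $2\sum_{r=1}^{n}rm_{r}$) together with the absorbed $q^{1/4}$ exactly reproduce the displayed exponents in both identities, and that the signs $(-1)^{n|\boldsymbol{k}|}$, the powers $\prod_{r=1}^{n}x_{r}^{nk_{r}-|\boldsymbol{k}|}$, and the rational factors involving $q^{k_{r}-k_{s}}x_{r}/x_{s}$ and $q^{m_{r}-m_{s}}x_{r}/x_{s}$ carry over unchanged, since they do not depend on $\alpha,b,b_{1}$ or $c_{1}$.
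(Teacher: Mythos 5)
Your proposal is correct and follows essentially the same route as the paper: the paper specializes Theorem \ref{t21-1} with $q\mapsto q^{2}$, $a_{1}=\cdots=a_{n}=0$, $\gamma=d$ to get \eqref{eq:10-11} and then takes exactly your two parameter choices ($\alpha=q^{2},\beta=1,b=c=q^{-1}$ and $\alpha\to1,\beta=q^{2},b=c=q$), multiplying by $(1-q^{2})q^{1/4}$; your base-$q^{2}$ version of \eqref{eq:21-3} with $m=1$ is literally the same intermediate identity under the relabelling $c_{1}=\beta$, $b_{1}=c$. The bookkeeping you describe, including the treatment of the $\boldsymbol{k}=\boldsymbol{0}$ term in the $\alpha\to1$ limit, matches the paper's computation.
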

\noindent{\it Proof.} Replacing $q$ by $q^{2}$ in the identity
of Theorem \ref{t21-1} and then taking $a_{1}=\cdots=a_{n}=0$ and
$\gamma=d$ we have 
\begin{equation}
\begin{aligned} & \frac{(\alpha q^{2},\alpha\beta;q^{2})_{\infty}}{(\alpha b,\alpha c;q^{2})_{\infty}}\\
 & =\sum_{{k_{r}\geq0\atop r=1,2,\cdots,n}}\prod_{1\leq r<s\leq n}\frac{1-q^{2k_{r}-2k_{s}}x_{r}/x_{s}}{1-x_{r}/x_{s}}\cdot\frac{(1-\alpha q^{4|\boldsymbol{k}|})(\alpha;q^{2})_{|\boldsymbol{k}|}}{(1-\alpha)\prod_{r,s=1}^{n}(q^{2}x_{r}/x_{s};q^{2})_{k_{r}}}
\end{aligned}
\label{eq:10-11}
\end{equation}
\begin{align*}
 & \times(-1)^{n|\boldsymbol{k}|}q^{2\sum_{r=1}^{n}(r-1)k_{r}+2n\sum_{r=1}^{n}{k_{r} \choose 2}}\prod_{r=1}^{n}x_{r}^{nk_{r}-|\boldsymbol{k}|}\\
 & \;\times\sum_{{0\leq m_{r}\leq k_{r}\atop r=1,2,\cdots,n}}\prod_{1\leq r<s\leq n}\frac{1-q^{2m_{r}-2m_{s}}x_{r}/x_{s}}{1-x_{r}/x_{s}}\\
 & \quad\times\prod_{r,s=1}^{n}\frac{(q^{-2k_{s}}x_{r}/x_{s};q^{2})_{m_{r}}}{(q^{2}x_{r}/x_{s};q^{2})_{m_{r}}}\cdot\frac{(\alpha q^{2|\boldsymbol{k}|},\alpha\beta;q^{2})_{|\boldsymbol{m}|}}{(\alpha b,\alpha c;q^{2})_{|\boldsymbol{m}|}}q^{2\sum_{r=1}^{n}rm_{r}}.
\end{align*}
Then the first identity follows easily by setting $\alpha\mapsto q^{2},\beta\mapsto1,b\mapsto q^{-1},c\mapsto q^{-1}$
in \eqref{eq:10-11} and then multiplying both sides of the resulting
identity by $(1-q^{2})q^{1/4}.$

We put $\alpha\mapsto1,\beta\mapsto q^{2},b\mapsto q,c\mapsto q$
in \eqref{eq:10-11} and then multiply both sides of the resulting
identity by $(1-q^{2})q^{1/4}$ to get the second identity. \qed

\begin{thm}
We have 
\[
\begin{aligned}1/\pi_{q} & =\frac{1}{1-q^{2}}\sum_{{k_{r}\geq0\atop r=1,2,\cdots,n}}\prod_{1\leq r<s\leq n}\frac{1-q^{2k_{r}-2k_{s}}x_{r}/x_{s}}{1-x_{r}/x_{s}}\cdot\frac{(1-q^{4|\boldsymbol{k}|+1})(q;q^{2})_{|\boldsymbol{k}|}}{\prod_{r,s=1}^{n}(q^{2}x_{r}/x_{s};q^{2})_{k_{r}}}\\
 & \times(-1)^{n|\boldsymbol{k}|}q^{2\sum_{r=1}^{n}(r-1)k_{r}+2n\sum_{r=1}^{n}{k_{r} \choose 2}-\frac{1}{4}}\prod_{r=1}^{n}x_{r}^{nk_{r}-|\boldsymbol{k}|}\\
 & \;\times\sum_{{0\leq m_{r}\leq k_{r}\atop r=1,2,\cdots,n}}\prod_{1\leq r<s\leq n}\frac{1-q^{2m_{r}-2m_{s}}x_{r}/x_{s}}{1-x_{r}/x_{s}}\\
 & \quad\times\prod_{r,s=1}^{n}\frac{(q^{-2k_{s}}x_{r}/x_{s};q^{2})_{m_{r}}}{(q^{2}x_{r}/x_{s};q^{2})_{m_{r}}}\cdot\frac{(q^{2|\boldsymbol{k}|+1},q;q^{2})_{|\boldsymbol{m}|}}{(q^{2},q^{2};q^{2})_{|\boldsymbol{m}|}}q^{2\sum_{r=1}^{n}rm_{r}},
\end{aligned}
\]
\end{thm}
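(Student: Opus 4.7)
The plan is to derive this expansion formula for $1/\pi_{q}$ as a specialization of the identity \eqref{eq:10-11}, which is the $a_{1}=\cdots=a_{n}=0$, $\gamma=d$ case of Theorem \ref{t21-1} with $q$ replaced by $q^{2}$. This is exactly the same launching point used for the preceding two expansions of $\pi_{q}$, but with a different choice of the four free parameters $\alpha,\beta,b,c$ so that the infinite-product prefactor collapses to a rational multiple of $(q;q^{2})_{\infty}^{2}/(q^{2};q^{2})_{\infty}^{2}$, which is the factor appearing in Gosper's definition of $\pi_{q}$.

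Concretely, I would substitute $\alpha\mapsto q,\beta\mapsto 1,b\mapsto q,c\mapsto q$ into \eqref{eq:10-11}. Under this choice, the left-hand side becomes
\[
\frac{(q^{3},q;q^{2})_{\infty}}{(q^{2},q^{2};q^{2})_{\infty}}=\frac{(q;q^{2})_{\infty}^{2}}{(1-q)(q^{2};q^{2})_{\infty}^{2}},
\]
and invoking the definition $\pi_{q}=(1-q^{2})q^{1/4}(q^{2};q^{2})_{\infty}^{2}/(q;q^{2})_{\infty}^{2}$ rewrites this as $(1+q)q^{1/4}/\pi_{q}$. On the right-hand side, the same specialization turns $(\alpha;q^{2})_{|\boldsymbol{k}|}$ into $(q;q^{2})_{|\boldsymbol{k}|}$, turns $(1-\alpha q^{4|\boldsymbol{k}|})$ into $(1-q^{4|\boldsymbol{k}|+1})$, turns $(\alpha q^{2|\boldsymbol{k}|},\alpha\beta;q^{2})_{|\boldsymbol{m}|}$ into $(q^{2|\boldsymbol{k}|+1},q;q^{2})_{|\boldsymbol{m}|}$, and turns $(\alpha b,\alpha c;q^{2})_{|\boldsymbol{m}|}$ into $(q^{2},q^{2};q^{2})_{|\boldsymbol{m}|}$—all matching the factors shown in the statement.

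To finish, I would divide both sides by $(1+q)q^{1/4}$ so that $1/\pi_{q}$ stands alone on the left. The surviving constant factor $1/((1-q)(1+q))=1/(1-q^{2})$ emerges by combining the $1/(1-q)$ coming from $1/(1-\alpha)$ in the general formula with the explicit $1/(1+q)$, matching the prefactor in the theorem, while the stray $q^{-1/4}$ is naturally absorbed into the summand as part of the exponent $q^{2\sum_{r}(r-1)k_{r}+2n\sum_{r}\binom{k_{r}}{2}-1/4}$.

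There is no genuine obstacle in this argument; the only point demanding care is the bookkeeping of the three multiplicative constants $(1-q)$, $(1+q)$, and $q^{1/4}$ that arise from simplifying the infinite product on the left and moving it across the equality. Everything else is a direct substitution into \eqref{eq:10-11}, which has already been established in the course of deriving the expansions for $\pi_{q}$.
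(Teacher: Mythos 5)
Your proposal is correct and follows exactly the paper's route: the paper also specializes \eqref{eq:10-11} with $\alpha\mapsto q$, $\beta\mapsto 1$, $b\mapsto q$, $c\mapsto q$ and then rescales by a constant times $q^{-1/4}$ to isolate $1/\pi_{q}$. Your accounting of the factors $(1-q)$ from $(q^{3};q^{2})_{\infty}$, $1/(1-\alpha)=1/(1-q)$ from the sum, and $(1+q)q^{1/4}$ from Gosper's definition is accurate and in fact spells out the normalization slightly more explicitly than the paper does.
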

\begin{proof}
This identity follows readily by taking $\alpha\mapsto q,\beta\mapsto1,b\mapsto q,c\mapsto q$
in \eqref{eq:10-11} and then multiplying both sides of the resulting
identity by $(1-q^{2})^{-1}q^{-1/4}.$
\end{proof}

\section{\label{sec:7} Two $A_{n}$ extensions of the Rogers-Fine identity}

The Rogers-Fine identity \cite[eq.(14.1)]{Fi} may be one of the fundamental
formulas in basic hypergeometric series. It can be stated in the following
form:
\[
\sum_{n=0}^{\infty}\frac{(a)_{n}}{(b)_{n}}z^{n}=\sum_{n=0}^{\infty}\frac{(1-azq^{2n})(a,azq/b)_{n}}{(b)_{n}(z)_{n+1}}(bz)^{n}q^{n^{2}-n},
\]
where $|z|<1.$ This identity was first proved by L.J. Rogers \cite{R}
and also discovered by N.J. Fine \cite{Fi} independently. In \cite{Fi},
Fine used it to deduce many important results in partitions, modular
equations and mock theta functions. In \cite{L02}, Liu gave an alternate
proof of this result by using an important expansion formula \cite[Theorem 2]{L02}
for $q$-series.

In this section, we will establish two $A_{n}$ extensions of the
Rogers-Fine identity by applying the identity \eqref{eq:t1-1} in
Theorem \ref{t1}.
\begin{thm}
\textup{\label{t8-1} (First $A_{n}$ extension of the Rogers-Fine
identity)} Let $a,z$ be such that $|z/a^{n-1}|<1.$ Suppose that
none of the denominators in the following identity vanishes, then
\begin{align*}
 & \sum_{{j_{r}\geq0\atop r=1,2,\cdots,n}}\prod_{1\leq r<s\leq n}\frac{1-q^{j_{r}-j_{s}}x_{r}/x_{s}}{1-x_{r}/x_{s}}\prod_{r,s=1}^{n}\frac{(ax_{r}/x_{s})_{j_{r}}}{(qx_{r}/x_{s})_{j_{r}}}\\
 & \times\frac{(q)_{|\boldsymbol{j}|}}{(b)_{|\boldsymbol{j}|}}(z/a^{n-1})^{|\boldsymbol{j}|}q^{\sum_{r=1}^{n}(r-1)j_{r}}\\
 & =\sum_{{k_{r}\geq0\atop r=1,2,\cdots,n}}\prod_{1\leq r<s\leq n}\frac{1-q^{k_{r}-k_{s}}x_{r}/x_{s}}{1-x_{r}/x_{s}}\prod_{r,s=1}^{n}\frac{(ax_{r}/x_{s})_{k_{r}}}{(qx_{r}/x_{s})_{k_{r}}}\\
 & \times\frac{(1-azq^{2|\boldsymbol{k}|})(q,azq/b)_{|\boldsymbol{k}|}}{(b)_{|\boldsymbol{k}|}(z/a^{n-1})_{|\boldsymbol{k}|+1}}(bz/a^{n-1})^{|\boldsymbol{k}|}q^{\sum_{r=1}^{n}(r-1)k_{r}+2{|\boldsymbol{k}| \choose 2}}.
\end{align*}
\end{thm}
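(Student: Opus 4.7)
\medskip

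\noindent\textbf{Proof plan for Theorem \ref{t8-1}.} The strategy is to specialize the master identity \eqref{eq:t1-1} of Theorem \ref{t1} with a carefully chosen $K(\boldsymbol{y})$, $\beta(\boldsymbol{j})$ and $\boldsymbol{y}$, then evaluate the inner sums on the right-hand side by known $A_{n}$ summation formulas so that the triple sum of \eqref{eq:t1-1} collapses to the single sum on the right of Theorem \ref{t8-1}.

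\emph{Step 1: matching the LHS.} First, to convert the factor $\prod_{r,s}(x_{r}/x_{s}y_{s})_{j_{r}}$ appearing in \eqref{eq:t1-1} into the factor $\prod_{r,s}(ax_{r}/x_{s})_{j_{r}}$ of Theorem \ref{t8-1}, I set $y_{r}=1/a$ for $r=1,\dots,n$, so that $x_{r}/(x_{s}y_{s})=ax_{r}/x_{s}$ and $y_{1}\cdots y_{n}=1/a^{n}$. Next, to produce the prefactor $(1-azq^{2|\boldsymbol{k}|})$ together with the denominator $(z/a^{n-1})_{|\boldsymbol{k}|+1}$ on the right of Theorem \ref{t8-1}, I replace the parameter $a$ of \eqref{eq:t1-1} by $az$; after this the factor $(1-az)(azq\,y_{1}\cdots y_{n})_{|\boldsymbol{k}|}=(1-az)(zq/a^{n-1})_{|\boldsymbol{k}|}$ regroups (via $(z/a^{n-1})_{|\boldsymbol{k}|+1}=(1-z/a^{n-1})(zq/a^{n-1})_{|\boldsymbol{k}|}$) into the desired $(z/a^{n-1})_{|\boldsymbol{k}|+1}$ after absorbing one factor into $\beta$. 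Finally, I take $K(\boldsymbol{y})\equiv 1$ and select
\[
\beta(\boldsymbol{j})=\prod_{r,s=1}^{n}\frac{1}{(qx_{r}/x_{s})_{j_{r}}}\cdot\frac{(q)_{|\boldsymbol{j}|}(1-b)(bq/a^{n})_{|\boldsymbol{j}|}}{(b)_{|\boldsymbol{j}|}(1-bq^{2|\boldsymbol{j}|})}\,(az)^{|\boldsymbol{j}|},
\]
which is designed so that the factor $(1-bq^{2|\boldsymbol{j}|})/\{(1-b)(bq y_{1}\cdots y_{n})_{|\boldsymbol{j}|}\}$ on the left of \eqref{eq:t1-1} is exactly cancelled. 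A direct check then shows that, with these choices, the LHS of \eqref{eq:t1-1} coincides with the LHS of Theorem \ref{t8-1}.

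\emph{Step 2: collapsing the inner $\boldsymbol{m}$-sum.} Substituting the same specializations into the RHS of \eqref{eq:t1-1}, the innermost sum (over $\boldsymbol{m}$) is, after the change of variables $\tilde{x}_{r}=q^{j_{r}}x_{r}$ and $\tilde{N}_{s}=k_{s}-j_{s}$, a well-poised $A_{n}$ series of the type summed by Milne's $A_{n}$ $q$-Chu--Vandermonde/$_{6}\phi_{5}$ summation \cite[Theorem 4.1]{M} (equivalently, the identity \eqref{eq:2-2} used already in Section~\ref{sec:4}). Applying it evaluates the $\boldsymbol{m}$-sum in closed form and reduces the RHS of \eqref{eq:t1-1} to a double sum over $\boldsymbol{k}$ and $\boldsymbol{j}$.

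\emph{Step 3: collapsing the $\boldsymbol{j}$-sum.} After the $\boldsymbol{m}$-evaluation the remaining $\boldsymbol{j}$-sum (with $\beta(\boldsymbol{j})$ from Step 1 substituted) is again a very-well-poised $A_{n}$ series; using the elementary identities $(1-b)(bq)_{|\boldsymbol{j}|}=(b)_{|\boldsymbol{j}|+1}$ and $(b)_{2|\boldsymbol{j}|}(1-bq^{2|\boldsymbol{j}|})=(b)_{2|\boldsymbol{j}|+1}$ to clean up the $b$-dependence, it is evaluated in closed form by another application of Milne's $A_{n}$ $q$-summation, or equivalently by Theorem \ref{t11-5}. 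The closed form produces exactly the factor $(q,azq/b)_{|\boldsymbol{k}|}/(b)_{|\boldsymbol{k}|}$ together with the powers $b^{|\boldsymbol{k}|}q^{2\binom{|\boldsymbol{k}|}{2}}$, which, combined with the prefactors already produced in Step 1, yield the RHS of Theorem \ref{t8-1}. Analytic continuation in $z$ (or termination when $a=q^{-N}$) then removes the formal nature of the derivation, exactly as in the proof of Theorem \ref{t1}.

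\emph{Main obstacle.} The principal difficulty is the double evaluation: the $\boldsymbol{m}$-sum must yield \emph{precisely} the factor needed to make the remaining $\boldsymbol{j}$-sum very-well-poised and again summable by an $A_{n}$ summation formula, and the bookkeeping of the shifted $A_{n}$ $q$-factorials $(q^{j_{r}-k_{s}}x_{r}/x_{s})_{m_{r}}$, $(q^{1+j_{r}-j_{s}}x_{r}/x_{s})_{m_{r}}$ together with the mixed $a$-$b$ very-well-poised ratios requires particular care. Verifying the $n=1$ specialization recovers the classical Rogers--Fine identity provides a useful sanity check on the parameter identifications.
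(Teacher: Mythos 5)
Your overall strategy is the right one and is the same as the paper's: specialize \eqref{eq:t1-1} with $a\mapsto az$ and $y_{r}=1/a$, choose $\beta(\boldsymbol{j})$ to carry the Rogers--Fine data, and collapse the two inner sums by $A_{n}$ $q$-Chu--Vandermonde evaluations. However, two of your specific choices break the mechanism, and both inner-sum evaluations you invoke do not apply as cited.

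First, you identify the auxiliary parameter $b$ of Theorem \ref{t1} with the $b$ of the Rogers--Fine identity and cancel its effect on the left side through $\beta(\boldsymbol{j})$. This does make the left sides match, but it leaves the factors $(bq^{1+|\boldsymbol{j}|})_{|\boldsymbol{m}|}/(bq^{2|\boldsymbol{j}|+1})_{|\boldsymbol{m}|}$ alive in the innermost sum, which is then a \emph{balanced} $A_{n}$ $_{3}\phi_{2}$ whose non-terminating parameters all depend only on $|\boldsymbol{m}|$. That is not the series summed by \eqref{eq:2-2} (Milne's Theorem 4.1), where one numerator and one denominator parameter are attached to the $x_{i}$; the citation does not cover this placement, so Step 2 is unsupported. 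Moreover, even granting Step 2, your $\beta(\boldsymbol{j})$ injects $(1-b)(bq/a^{n})_{|\boldsymbol{j}|}/(1-bq^{2|\boldsymbol{j}|})$ into the $\boldsymbol{j}$-sum alongside the $(bq)_{|\boldsymbol{j}|}/(b)_{2|\boldsymbol{j}|}$ already present in \eqref{eq:t1-1}; the elementary identities you quote do not remove the parameter $(bq/a^{n})_{|\boldsymbol{j}|}$, the resulting series is terminating and not very-well-poised, and Theorem \ref{t11-5} (a non-terminating very-well-poised $_{6}\phi_{5}$ with factors $\prod_{r}(1-\alpha x_{r}q^{k_{r}+|\boldsymbol{k}|})/(1-\alpha x_{r})$) is not the relevant tool. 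The correct move is to keep the two $b$'s distinct: set the Theorem-\ref{t1} parameter $b$ equal to $0$, so that all of its machinery disappears and the $\boldsymbol{m}$-sum becomes a pure $A_{n}$ $q$-Chu--Vandermonde (the paper's \eqref{eq:8-1}--\eqref{eq:8-2}), and let the Rogers--Fine $b$ enter only through $1/(b)_{|\boldsymbol{j}|}$ inside $\beta(\boldsymbol{j})$, so that the $\boldsymbol{j}$-sum is a second Chu--Vandermonde (the paper's \eqref{eq:8-3}).

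Second, the choice $K(\boldsymbol{y})\equiv1$ cannot be repaired by ``absorbing one factor into $\beta$.'' The paper takes $K(\boldsymbol{y})=(1-azy_{1}\cdots y_{n})/(1-az)$ precisely because $K(q^{\boldsymbol{m}+\boldsymbol{j}})$ depends on $|\boldsymbol{m}|+|\boldsymbol{j}|$ and converts the denominator $(azq)_{|\boldsymbol{m}|+|\boldsymbol{j}|}$ of \eqref{eq:t1-1} into $(az)_{|\boldsymbol{j}|}(azq^{|\boldsymbol{j}|})_{|\boldsymbol{m}|}$, which is exactly what \eqref{eq:8-2} requires, while simultaneously producing the factor $(z/a^{n-1})_{|\boldsymbol{k}|+1}$ in the final answer. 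A function of $\boldsymbol{j}$ alone cannot supply a factor depending on $|\boldsymbol{m}|+|\boldsymbol{j}|$, so with $K\equiv1$ the $\boldsymbol{m}$-sum evaluates (if at all) to something carrying $(q^{1-|\boldsymbol{k}|})_{|\boldsymbol{k}|-|\boldsymbol{j}|}$-type factors rather than the clean $(q)_{|\boldsymbol{k}|}/(q)_{|\boldsymbol{j}|}$ needed for the $\boldsymbol{j}$-sum to telescope. With these two corrections your plan becomes the paper's proof.
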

\noindent{\it Proof.} We recall a $U(n+1)$ $q$-Gauss summation
in \cite[Theorem 7.6]{M}:
\begin{align*}
 & \sum_{{y_{k}\geq0\atop k=1,2,\cdots,n}}\prod_{1\leq r<s\leq n}\frac{1-\tfrac{x_{r}}{x_{s}}q^{y_{r}-y_{s}}}{1-\tfrac{x_{r}}{x_{s}}}\prod_{r,s=1}^{n}\frac{\left(\tfrac{x_{r}}{x_{s}}a_{s}\right)_{y_{r}}}{\left(q\tfrac{x_{r}}{x_{s}}\right)_{y_{r}}}\\
 & \quad\times\frac{(b)_{|\boldsymbol{y}|}}{(c)_{|\boldsymbol{y}|}}\left(\frac{c}{a_{1}\cdots a_{n}b}\right)^{|\boldsymbol{y}|}q^{y_{2}+2y_{3}+\cdots+(n-1)y_{n}}\\
 & =\frac{(c/b,c/a_{1}\cdots a_{n})_{\infty}}{(c,c/a_{1}\cdots a_{n}b)_{\infty}},
\end{align*}
where $|c|<|a_{1}\cdots a_{n}b|.$ If $a_{i}=q^{-N_{i}}$ for $i=1,2,\cdots,n,$
then the above identity becomes
\begin{align*}
 & \sum_{{0\leq y_{k}\leq N_{k}\atop k=1,2,\cdots,n}}\prod_{1\leq r<s\leq n}\frac{1-\tfrac{x_{r}}{x_{s}}q^{y_{r}-y_{s}}}{1-\tfrac{x_{r}}{x_{s}}}\prod_{r,s=1}^{n}\frac{\left(\tfrac{x_{r}}{x_{s}}q^{-N_{s}}\right)_{y_{r}}}{\left(q\tfrac{x_{r}}{x_{s}}\right)_{y_{r}}}\\
 & \quad\times\frac{(b)_{|\boldsymbol{y}|}}{(c)_{|\boldsymbol{y}|}}\left(\frac{cq^{|\boldsymbol{N}|}}{b}\right)^{|\boldsymbol{y}|}q^{y_{2}+2y_{3}+\cdots+(n-1)y_{n}}\\
 & =\frac{(c/b)_{|\boldsymbol{N}|}}{(c)_{|\boldsymbol{N}|}}.
\end{align*}
Applying 
\[
(A;q)_{n}=(A^{-1};q^{-1})_{n}(-A)^{n}q^{{n \choose 2}}
\]
in this formula, replacing each parameter, including $q$ and $x_{1},\cdots,x_{n},$
by its reciprocal and then simplifying we get
\begin{equation}
\begin{aligned} & \sum_{{0\leq y_{r}\leq N_{r}\atop r=1,2,\cdots,n}}\prod_{1\leq r<s\leq n}\frac{1-\tfrac{x_{r}}{x_{s}}q^{y_{r}-y_{s}}}{1-\tfrac{x_{r}}{x_{s}}}\\
 & \quad\times\prod_{r,s=1}^{n}\frac{\left(\tfrac{x_{r}}{x_{s}}q^{-N_{s}}\right)_{y_{r}}}{\left(q\tfrac{x_{r}}{x_{s}}\right)_{y_{r}}}\frac{(b)_{|\boldsymbol{y}|}}{(c)_{|\boldsymbol{y}|}}q^{y_{1}+2y_{2}+\cdots+ny_{n}}\\
 & =\frac{(c/b)_{|\boldsymbol{N}|}}{(c)_{|\boldsymbol{N}|}}b^{|\boldsymbol{N}|}.
\end{aligned}
\label{eq:8-1}
\end{equation}
Taking $N_{r}\mapsto k_{r}-j_{r},x_{r}\mapsto q^{j_{r}}x_{r},b\mapsto azq^{|\boldsymbol{k}|+|\boldsymbol{j}|},c\mapsto azq^{|\boldsymbol{j}|}$
in \eqref{eq:8-1} we have
\begin{equation}
\begin{aligned} & \sum_{{0\leq m_{r}\leq k_{r}-j_{r}\atop r=1,2,\cdots,n}}\prod_{1\leq r<s\leq n}\frac{1-q^{m_{r}-m_{s}+j_{r}-j_{s}}x_{r}/x_{s}}{1-q^{j_{r}-j_{s}}x_{r}/x_{s}}q^{\sum_{r=1}^{n}rm_{r}}\\
 & \times\prod_{r,s=1}^{n}\frac{(q^{-k_{s}+j_{r}}x_{r}/x_{s})_{m_{r}}}{(q^{1+j_{r}-j_{s}}x_{r}/x_{s})_{m_{r}}}\frac{(azq^{|\boldsymbol{k}|+|\boldsymbol{j}|})_{|\boldsymbol{m}|}}{(azq^{|\boldsymbol{j}|})_{|\boldsymbol{m}|}}\\
 & =\frac{(q^{-|\boldsymbol{k}|})_{|\boldsymbol{k}|-|\boldsymbol{j}|}}{(azq^{|\boldsymbol{j}|})_{|\boldsymbol{k}|-|\boldsymbol{j}|}}(azq^{|\boldsymbol{k}|+|\boldsymbol{j}|})^{|\boldsymbol{k}|-|\boldsymbol{j}|}\\
 & =\frac{(q)_{|\boldsymbol{k}|}(-az)^{|\boldsymbol{k}|-|\boldsymbol{j}|}}{(azq^{|\boldsymbol{j}|})_{|\boldsymbol{k}|-|\boldsymbol{j}|}(q)_{|\boldsymbol{j}|}}q^{{|\boldsymbol{k}| \choose 2}-{|\boldsymbol{j}| \choose 2}},
\end{aligned}
\label{eq:8-2}
\end{equation}
where, in the last equality we have used the identity \cite[eq.(I.14)]{GR}.

Setting $N_{r}\mapsto k_{r},b\mapsto azq^{|\boldsymbol{k}|},c\mapsto b$
in \eqref{eq:8-1} we see that
\begin{equation}
\begin{aligned} & \sum_{{0\leq j_{r}\leq k_{r}\atop r=1,2,\cdots,n}}\prod_{1\leq r<s\leq n}\frac{1-q^{j_{r}-j_{s}}x_{r}/x_{s}}{1-x_{r}/x_{s}}\\
 & \times\prod_{r,s=1}^{n}\frac{(q^{-k_{s}}x_{r}/x_{s})_{j_{r}}}{(qx_{r}/x_{s})_{j_{r}}}\frac{(azq^{|\boldsymbol{k}|})_{|\boldsymbol{j}|}}{(b)_{|\boldsymbol{j}|}}q^{\sum_{r=1}^{n}rj_{r}}\\
 & =\frac{(bq^{-|\boldsymbol{k}|}/az)_{|\boldsymbol{k}|}}{(b)_{|\boldsymbol{k}|}}(azq^{|\boldsymbol{k}|})^{|\boldsymbol{k}|}\\
 & =\frac{(azq/b)_{|\boldsymbol{k}|}}{(b)_{|\boldsymbol{k}|}}\left(-b\right)^{|\boldsymbol{k}|}q^{{|\boldsymbol{k}| \choose 2}},
\end{aligned}
\label{eq:8-3}
\end{equation}
where, in the last equality we applied the formula \cite[eq.(I.8)]{GR}.

Replacing $a$ by $az$ in the $b=0$ case of \eqref{eq:t1-1} in
Theorem \ref{t1} and then taking
\[
K(y)=\frac{1-azy_{1}\cdots y_{n}}{1-az}
\]
 and
\[
\beta(\boldsymbol{j})=\frac{(q)_{|\boldsymbol{j}|}(az)^{|\boldsymbol{j}|}}{(b)_{|\boldsymbol{j}|}\prod_{r,s=1}^{n}(qx_{r}/x_{s})_{j_{r}}}
\]
we obtain
\[
\begin{aligned} & \frac{1-azy_{1}\cdots y_{n}}{1-az}\sum_{{j_{r}\geq0\atop r=1,2,\cdots,n}}\prod_{1\leq r<s\leq n}\frac{1-q^{j_{r}-j_{s}}x_{r}/x_{s}}{1-x_{r}/x_{s}}\\
 & \quad\times\prod_{r,s=1}^{n}\frac{(x_{r}/x_{s}y_{s})_{j_{r}}}{(qx_{r}/x_{s})_{j_{r}}}\frac{(q)_{|\boldsymbol{j}|}}{(b)_{|\boldsymbol{j}|}}(azy_{1}y_{2}\cdots y_{n})^{|\boldsymbol{j}|}q^{\sum_{r=1}^{n}(r-1)j_{r}}\\
 & =\sum_{{k_{r}\geq0\atop r=1,2,\cdots,n}}\prod_{1\leq r<s\leq n}\frac{1-q^{k_{r}-k_{s}}x_{r}/x_{s}}{1-x_{r}/x_{s}}\prod_{r,s=1}^{n}\frac{(x_{r}/x_{s}y_{s})_{k_{r}}}{(qx_{r}/x_{s})_{k_{r}}}
\end{aligned}
\]
\[
\begin{aligned} & \times\frac{(1-azq^{2|\boldsymbol{k}|})(az)_{|\boldsymbol{k}|}}{(1-az)(azqy_{1}y_{2}\cdots y_{n})_{|\boldsymbol{k}|}}(y_{1}y_{2}\cdots y_{n})^{|\boldsymbol{k}|}q^{\sum_{r=1}^{n}(r-1)k_{r}}\\
 & \times\sum_{{0\leq j_{r}\leq k_{r}\atop r=1,2,\cdots,n}}\prod_{1\leq r<s\leq n}\frac{1-q^{j_{r}-j_{s}}x_{r}/x_{s}}{1-x_{r}/x_{s}}\prod_{r,s=1}^{n}\frac{(q^{-k_{s}}x_{r}/x_{s})_{j_{r}}}{(qx_{r}/x_{s})_{j_{r}}}\\
 & \;\times\frac{(azq^{|\boldsymbol{k}|},q)_{|\boldsymbol{j}|}}{(b,az)_{|\boldsymbol{j}|}}(-az)^{|\boldsymbol{j}|}q^{\sum_{r=1}^{n}rj_{r}+{|\boldsymbol{j}| \choose 2}}\\
 & \;\times\sum_{{0\leq m_{r}\leq k_{r}-j_{r}\atop r=1,2,\cdots,n}}\prod_{1\leq r<s\leq n}\frac{1-q^{m_{r}-m_{s}+j_{r}-j_{s}}x_{r}/x_{s}}{1-q^{j_{r}-j_{s}}x_{r}/x_{s}}q^{\sum_{r=1}^{n}rm_{r}}\\
 & \quad\times\prod_{r,s=1}^{n}\frac{(q^{-k_{s}+j_{r}}x_{r}/x_{s})_{m_{r}}}{(q^{1+j_{r}-j_{s}}x_{r}/x_{s})_{m_{r}}}\frac{(azq^{|\boldsymbol{k}|+|\boldsymbol{j}|})_{|\boldsymbol{m}|}}{(azq^{|\boldsymbol{j}|})_{|\boldsymbol{m}|}}.
\end{aligned}
\]

Substituting \eqref{eq:8-2} into the inner-most sum $\sum_{{0\leq m_{r}\leq k_{r}-j_{r}\atop r=1,2,\cdots,n}}$
on the right side of the above identity and then applying \eqref{eq:8-3}
in the second sum $\sum_{{0\leq j_{r}\leq k_{r}\atop r=1,2,\cdots,n}}$
on the right side of the resulting identity yields
\begin{align*}
 & \frac{1-azy_{1}\cdots y_{n}}{1-az}\sum_{{j_{r}\geq0\atop r=1,2,\cdots,n}}\prod_{1\leq r<s\leq n}\frac{1-q^{j_{r}-j_{s}}x_{r}/x_{s}}{1-x_{r}/x_{s}}\prod_{r,s=1}^{n}\frac{(x_{r}/x_{s}y_{s})_{j_{r}}}{(qx_{r}/x_{s})_{j_{r}}}\\
 & \quad\times\frac{(q)_{|\boldsymbol{j}|}}{(b)_{|\boldsymbol{j}|}}(azy_{1}y_{2}\cdots y_{n})^{|\boldsymbol{j}|}q^{\sum_{r=1}^{n}(r-1)j_{r}}\\
 & =\sum_{{k_{r}\geq0\atop r=1,2,\cdots,n}}\prod_{1\leq r<s\leq n}\frac{1-q^{k_{r}-k_{s}}x_{r}/x_{s}}{1-x_{r}/x_{s}}\prod_{r,s=1}^{n}\frac{(x_{r}/x_{s}y_{s})_{k_{r}}}{(qx_{r}/x_{s})_{k_{r}}}\\
 & \times\frac{(1-azq^{2|\boldsymbol{k}|})(q,azq/b)_{|\boldsymbol{k}|}}{(1-az)(b,azqy_{1}y_{2}\cdots y_{n})_{|\boldsymbol{k}|}}(abzy_{1}y_{2}\cdots y_{n})^{|\boldsymbol{k}|}q^{\sum_{r=1}^{n}(r-1)k_{r}+2{|\boldsymbol{k}| \choose 2}}.
\end{align*}
Then the result follows by multiplying both sides of this identity
by $\frac{1-az}{1-azy_{1}\cdots y_{n}}$ and then setting $y_{1}=y_{2}=\cdots=y_{n}=1/a.$
\qed

A second $A_{n}$ extension of the Rogers-Fine identity is as follows.
\begin{thm}
\textup{\label{t8-2} (Second $A_{n}$ extension of the Rogers-Fine
identity)} Let $a,z$ be such that $|z/a^{n-1}|<1.$ Suppose that
none of the denominators in the following identity vanishes, then
\[
\begin{aligned} & \sum_{{j_{r}\geq0\atop r=1,2,\cdots,n}}\prod_{1\leq r<s\leq n}\frac{1-q^{j_{r}-j_{s}}x_{r}/x_{s}}{1-x_{r}/x_{s}}\prod_{r,s=1}^{n}\frac{(ax_{r}/x_{s})_{j_{r}}}{(qx_{r}/x_{s})_{j_{r}}}\\
 & \quad\times\frac{(q)_{|\boldsymbol{j}|}}{\prod_{i=1}^{n}(bx_{i})_{j_{i}}}(z/a^{n-1})^{|\boldsymbol{j}|}q^{\sum_{r=1}^{n}(r-1)j_{r}}\\
 & =\sum_{{k_{r}\geq0\atop r=1,2,\cdots,n}}\prod_{1\leq r<s\leq n}\frac{1-q^{k_{r}-k_{s}}x_{r}/x_{s}}{1-x_{r}/x_{s}}\prod_{r,s=1}^{n}\frac{(ax_{r}/x_{s})_{k_{r}}}{(qx_{r}/x_{s})_{k_{r}}}
\end{aligned}
\]
\[
\begin{aligned} & \times\frac{(1-azq^{2|\boldsymbol{k}|})(q)_{|\boldsymbol{k}|}}{(z/a^{n-1})_{|\boldsymbol{k}|+1}}\prod_{i=1}^{n}\frac{(azq^{1+|\boldsymbol{k}|-k_{i}}/bx_{i})_{k_{i}}}{(bx_{i})_{k_{i}}}\\
 & \times(bz/a^{n-1})^{|\boldsymbol{k}|}q^{\sum_{r=1}^{n}(r-1)k_{r}+{|\boldsymbol{k}| \choose 2}+\sum_{i=1}^{n}{k_{i} \choose 2}}\prod_{i=1}^{n}x_{i}^{k_{i}}.
\end{aligned}
\]
\end{thm}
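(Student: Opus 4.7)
The plan is to adapt the argument for Theorem \ref{t8-1} by modifying the choice of $\beta(\boldsymbol{j})$ in \eqref{eq:t1-1} and invoking a companion multivariate $q$-Chu--Vandermonde summation. As in the previous proof, we set $b=0$ in \eqref{eq:t1-1} and replace $a$ by $az$ throughout, and take
\[
K(\boldsymbol{y}) = \frac{1-azy_{1}\cdots y_{n}}{1-az},\qquad
\beta(\boldsymbol{j}) = \frac{(q)_{|\boldsymbol{j}|}(az)^{|\boldsymbol{j}|}}{\prod_{i=1}^{n}(bx_{i})_{j_{i}}\prod_{r,s=1}^{n}(qx_{r}/x_{s})_{j_{r}}}.
\]
The only structural change from the proof of Theorem \ref{t8-1} is the replacement of $(b)_{|\boldsymbol{j}|}$ by $\prod_{i=1}^{n}(bx_{i})_{j_{i}}$ in the denominator of $\beta(\boldsymbol{j})$; this choice is designed so that, after specialising $y_{1}=\cdots=y_{n}=1/a$ and clearing the factor $(1-azy_{1}\cdots y_{n})/(1-az)$, the left-hand side of \eqref{eq:t1-1} reproduces the desired left-hand side of Theorem \ref{t8-2}.

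On the right-hand side of \eqref{eq:t1-1}, the innermost $m$-sum is again evaluated by the $U(n+1)$ $q$-Gauss summation \eqref{eq:8-1} exactly in the closed form \eqref{eq:8-2}. After substitution, the residual $\boldsymbol{j}$-sum becomes
\[
\sum_{{0\leq j_{r}\leq k_{r}\atop r=1,\ldots,n}} \prod_{1\leq r<s\leq n}\frac{1-q^{j_{r}-j_{s}}x_{r}/x_{s}}{1-x_{r}/x_{s}}\prod_{r,s=1}^{n}\frac{(q^{-k_{s}}x_{r}/x_{s})_{j_{r}}}{(qx_{r}/x_{s})_{j_{r}}}\,\frac{(azq^{|\boldsymbol{k}|})_{|\boldsymbol{j}|}}{\prod_{i=1}^{n}(bx_{i})_{j_{i}}}\, q^{\sum_{r=1}^{n}rj_{r}}.
\]
In the proof of Theorem \ref{t8-1} the corresponding sum collapsed to a single shifted factorial via \eqref{eq:8-3}. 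Here, however, the denominator is of product type $\prod_{i}(bx_{i})_{j_{i}}$ rather than $(b)_{|\boldsymbol{j}|}$, so one needs a \emph{dual} terminating $A_{n}$ $q$-Chu--Vandermonde summation. The expected closed form is of the shape
\[
\prod_{i=1}^{n}\frac{(azq^{1+|\boldsymbol{k}|-k_{i}}/bx_{i})_{k_{i}}}{(bx_{i})_{k_{i}}}\,(-b)^{|\boldsymbol{k}|}\,q^{\sum_{i=1}^{n}\binom{k_{i}}{2}}\prod_{i=1}^{n}x_{i}^{k_{i}},
\]
which fits precisely the extra factors present on the right-hand side of Theorem \ref{t8-2}. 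Such a summation can be accessed by applying the reciprocation $(A)_{n}=(A^{-1};q^{-1})_{n}(-A)^{n}q^{\binom{n}{2}}$ together with a reversal of indices to a suitable multivariate identity in the spirit of \eqref{eq:8-1} (compare \cite{M}, \cite{MN12}); this is the principal obstacle of the argument.

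The final step is routine bookkeeping. Substituting the closed forms of both inner sums into \eqref{eq:t1-1}, collecting the resulting $q$-powers via the identity $\binom{|\boldsymbol{k}|}{2}=\sum_{i}\binom{k_{i}}{2}+\sum_{r<s}k_{r}k_{s}$, multiplying through by $(1-az)/(1-azy_{1}\cdots y_{n})$, and finally setting $y_{1}=\cdots=y_{n}=1/a$, yields the identity of Theorem \ref{t8-2}. The sole delicate point is therefore the verification of the product-denominator $A_{n}$ $q$-Chu--Vandermonde summation in the middle step; once this is in place, all remaining $q$-exponent and sign manipulations parallel those used in the proof of Theorem \ref{t8-1} via \cite[eq.(I.8)]{GR} and \cite[eq.(I.14)]{GR}.
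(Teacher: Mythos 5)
Your proposal follows essentially the same route as the paper: the same specialization of \eqref{eq:t1-1} (with $a\mapsto az$, $b\mapsto 0$, the same $K(\boldsymbol{y})$ and the product-denominator $\beta(\boldsymbol{j})$), evaluation of the innermost sum by \eqref{eq:8-2}, and then a second, product-denominator terminating $A_{n}$ $q$-Chu--Vandermonde summation for the residual $\boldsymbol{j}$-sum before setting $y_{i}=1/a$. The one step you flag as "the principal obstacle" --- the closed form with $\prod_{i=1}^{n}(azq^{1+|\boldsymbol{k}|-k_{i}}/bx_{i})_{k_{i}}/(bx_{i})_{k_{i}}$ --- is exactly the summation the paper invokes as \cite[Theorem 5.10]{M}, so your conjectured evaluation is a known result and the argument closes as you describe; you have correctly guessed its shape but not independently verified it.
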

\begin{proof}
We first take $a\mapsto az,b\mapsto0,K(y)\mapsto\frac{1-azy_{1}\cdots y_{n}}{1-az}$
and 
\[
\beta(\boldsymbol{j})\mapsto\frac{(q)_{|\boldsymbol{j}|}(az)^{|\boldsymbol{j}|}}{\prod_{i=1}^{n}(bx_{i})_{j_{i}}\prod_{r,s=1}^{n}(qx_{r}/x_{s})_{j_{r}}}
\]
in \eqref{eq:t1-1} of Theorem \ref{t1}, then apply \eqref{eq:8-2}
and the identity in \cite[Theorem 5.10]{M} and finally set $y_{i}\mapsto1/a$
for $i=1,2,\cdots,n$ to obtain this $A_{n}$ extension of the Rogers-Fine
identity.
\end{proof}

\section{\label{sec:8} An $A_{n}$ extension of Liu's extension of Rogers'
non-terminating $_{6}\phi_{5}$ summation}

Rogers' non-terminating $_{6}\phi_{5}$ summation \cite[eq.(II.20)]{GR}
may be one of the most important formulas in $q$-series. It can be
stated in the following equivalent form.
\begin{thm}
For $|\alpha abc/q^{2}|<1,$ we have
\begin{align*}
 & \text{}_{6}\phi_{5}\left(\begin{matrix}\alpha,q\sqrt{\alpha},-q\sqrt{\alpha},q/a,q/b,q/c\\
\sqrt{\alpha},-\sqrt{\alpha},\alpha a,\alpha b,\alpha c
\end{matrix};q,\frac{\alpha abc}{q^{2}}\right)\\
 & =\frac{(\alpha q,\alpha ab/q,\alpha ac/q,\alpha bc/q)_{\infty}}{(\alpha a,\alpha b,\alpha c,\alpha abc/q^{2})_{\infty}}.
\end{align*}
\end{thm}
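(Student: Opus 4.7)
The plan is to derive Rogers' non-terminating $_{6}\phi_{5}$ summation as the $n=1$ specialization of the $A_{n}$ Rogers' $_{6}\phi_{5}$ summation already established as Theorem \ref{t11-5}. When $n=1$ the product $\prod_{1\leq r<s\leq n}$ is empty, every product $\prod_{r,s=1}^{n}$ collapses to its single $r=s=1$ factor, and the auxiliary variable $x_{1}$ cancels out of all ratios. Writing $a=a_{1}$, the left-hand side of \eqref{eq:12-5} becomes
\[
\frac{(\alpha bc/q,\alpha ac/q,\alpha q,\alpha ab/q)_{\infty}}{(\alpha c,\alpha abc/q^{2},\alpha b,\alpha a)_{\infty}},
\]
and the right-hand side collapses to
\[
\sum_{k\geq 0}\frac{1-\alpha q^{2k}}{1-\alpha}\,\frac{(\alpha,q/a,q/b,q/c)_{k}}{(q,\alpha a,\alpha b,\alpha c)_{k}}\Bigl(\frac{\alpha abc}{q^{2}}\Bigr)^{\!k},
\]
which is exactly the series on the left of the stated identity. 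So after a one-line specialization the claim follows from a theorem proved earlier in the paper.

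The first step I would carry out is the careful bookkeeping on powers of $q$ in the summand of \eqref{eq:12-5} at $n=1$: the exponent $\sum_{r=1}^{n}(r-1)k_{r}$ vanishes identically, and the base $(\alpha a_{1}\cdots a_{n}bc/q^{1+n})^{|\boldsymbol{k}|}$ becomes $(\alpha abc/q^{2})^{k}$, so there is no leftover $q$-power to absorb. The second step is to match the infinite-product side: the factor $\prod_{r=1}^{n}(\alpha qx_{r},\alpha a_{r}bx_{r}/q)_{\infty}/(\alpha bx_{r},\alpha a_{r}x_{r})_{\infty}$ contributes $(\alpha qx_{1},\alpha abx_{1}/q)_{\infty}/(\alpha bx_{1},\alpha ax_{1})_{\infty}$, and the $x_{1}$ factors cancel between the four $q$-shifted factorials (they appear as a common dilation), giving $(\alpha q,\alpha ab/q)_{\infty}/(\alpha b,\alpha a)_{\infty}$. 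These two simplifications together reproduce the Rogers product.

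The main obstacle in this route is purely the one of verifying that Theorem \ref{t11-5} really does reduce to the classical formula, i.e.\ that no residual factor of $x_{1}$ or spurious power of $q$ survives the specialization; this is routine but must be checked carefully because of the non-symmetric treatment of $b$ and $c$ in the derivation of Theorem \ref{t11-5} (the parameter $c$ entered only through the Milne $U(n{+}1)$ $q$-Gauss sum \eqref{eq:2-2}). As a cross-check, one could instead specialize Liu's scalar expansion \eqref{eq:10-7} by choosing $\beta,\gamma,d$ so that the inner $_{4}\phi_{3}$ reduces via the $q$-Pfaff--Saalschütz identity, or alternatively obtain the formula as the classical $d\to\infty$ limit of Jackson's very-well-poised terminating $_{8}\phi_{7}$ sum; both give independent confirmations. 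Within the framework of this paper, however, the cleanest derivation is the $n=1$ specialization described above, since Theorem \ref{t11-5} is an identity of exactly the required shape and has already been proved.
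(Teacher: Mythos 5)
Your overall strategy is sound, but it differs from the paper's treatment: the paper does not prove this theorem at all. Rogers' non-terminating $_{6}\phi_{5}$ summation is quoted as a known classical result (\cite[eq.~(II.20)]{GR}) both in Section \ref{sec:4} and in Section \ref{sec:8}, and Theorem \ref{t11-5} is merely \emph{remarked} to be an $A_{n}$ extension of it. Your proposal turns that remark into an actual derivation: since Theorem \ref{t11-5} is proved in the paper from \eqref{eq:12-3} together with Milne's $q$-Gauss summation \eqref{eq:2-2} --- a chain that ultimately rests on \cite[Theorem 2.2]{BR} and nowhere invokes the classical $_{6}\phi_{5}$ --- the specialization is not circular within the paper's own logic, and it does recover the stated identity. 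This is a legitimate, arguably more self-contained route than simply citing \cite{GR}.

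One step in your write-up is wrong as stated, though the conclusion survives. You claim that in the $n=1$ case of \eqref{eq:12-5} the variable $x_{1}$ ``cancels out of all ratios'' and that the four infinite products $(\alpha qx_{1},\alpha abx_{1}/q)_{\infty}/(\alpha bx_{1},\alpha ax_{1})_{\infty}$ lose their $x_{1}$ because it ``appears as a common dilation.'' That is false: $q$-shifted factorials have no such dilation invariance, and $x_{1}$ also survives in the summand through the factors $(\alpha x_{1})_{|\boldsymbol{k}|}/(\alpha x_{1}a_{1})_{|\boldsymbol{k}|}$, $(1-\alpha x_{1}q^{2k})/(1-\alpha x_{1})$, and $(qx_{1}/c)_{k}/(\alpha bx_{1})_{k}$. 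What you must do instead is set $x_{1}=1$ --- equivalently, apply the substitution $x_{r}\mapsto x_{r}/x_{n}$ that the paper itself prescribes for all of its $n=1$ reductions (see the remarks following Theorems \ref{t11-2} and \ref{t7-2}). With $x_{1}=1$ every occurrence of $x_{1}$ disappears, the summand becomes exactly the very-well-poised $_{6}\phi_{5}$ term $\dfrac{(1-\alpha q^{2k})(\alpha,q/a,q/b,q/c)_{k}}{(1-\alpha)(q,\alpha a,\alpha b,\alpha c)_{k}}\left(\dfrac{\alpha abc}{q^{2}}\right)^{k}$, and the product side assembles to $\dfrac{(\alpha q,\alpha ab/q,\alpha ac/q,\alpha bc/q)_{\infty}}{(\alpha a,\alpha b,\alpha c,\alpha abc/q^{2})_{\infty}}$ as you computed. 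Once that substitution is made explicit, the proof is complete.
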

Using the operator method, Liu proved the following extension \cite[Theorem 3]{L11}
of the Rogers non-terminating $_{6}\phi_{5}$ summation formula.
\begin{thm}
\label{t7-1} For $\max\{|\alpha\beta abc/q^{2}|,|\alpha\gamma abc/q^{2}|\}<1,$
we have
\begin{align*}
 & \sum_{n=0}^{\infty}\frac{(1-\alpha q^{2n})(\alpha,q/a,q/b,q/c)_{n}}{(q,\alpha a,\alpha b,\alpha c)_{n}}\bigg(\frac{\alpha abc}{q^{2}}\bigg)^{n}\text{}_{4}\phi_{3}\left(\begin{matrix}q^{-n},\alpha q^{n},\beta,\gamma\\
q/a,q/b,\alpha\beta\gamma abc/q
\end{matrix};q,q\right)\\
 & =\frac{(\alpha,\alpha ac/q,\alpha bc/q,\alpha\beta ab/q,\alpha\gamma ab/q,\alpha\beta\gamma abc/q^{2})_{\infty}}{(\alpha a,\alpha b,\alpha c,\alpha\beta abc/q^{2},\alpha\gamma abc/q^{2},\alpha\beta\gamma ab/q)_{\infty}}.
\end{align*}
\end{thm}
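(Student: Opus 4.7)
My plan is to prove Theorem~\ref{t7-1} by applying Liu's expansion formula \eqref{eq:10-1} with a carefully chosen analytic function, then swapping the order of the resulting double sum and evaluating the inner sum via Rogers' non-terminating $_{6}\phi_{5}$ summation \cite[eq.(II.20)]{GR}.

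Concretely, I would set
\[
f(x)=\frac{(bcx/(aq),\beta bx/q,\gamma bx/q,\beta\gamma bcx/q^{2};q)_{\infty}}{(bx/a,\beta bcx/q^{2},\gamma bcx/q^{2},\beta\gamma bx/q;q)_{\infty}}
\]
and apply \eqref{eq:10-1} with its parameter $b$ replaced by $c$. The point of this choice is that the left-hand-side prefactor $\frac{(\alpha q,\alpha ac/q;q)_{\infty}}{(\alpha a,\alpha c;q)_{\infty}}f(\alpha a)$ coincides, after multiplication by $1-\alpha=(\alpha;q)_{\infty}/(\alpha q;q)_{\infty}$, with the infinite product on the right-hand side of Theorem~\ref{t7-1}. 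It therefore remains only to identify the right-hand side of \eqref{eq:10-1} with the left-hand side of Theorem~\ref{t7-1}. Substituting $x=\alpha q^{k+1}$ and using $(Aq^{k};q)_{\infty}=(A;q)_{\infty}/(A;q)_{k}$, one rewrites $f(\alpha q^{k+1})$ as an explicit product of $k$-shifted factorials that feeds into the inner sum of \eqref{eq:10-1}.

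Next I would swap the resulting double sum (outer $n$, inner $k$) by writing $n=k+m$ with $m\ge0$. The standard identities $(q^{-n};q)_{k}=(-1)^{k}q^{\binom{k}{2}-nk}(q;q)_{n}/(q;q)_{n-k}$ and $(\alpha q^{n};q)_{k}=(\alpha;q)_{n+k}/(\alpha;q)_{n}$ turn the inner sum over $m$ into a non-terminating very-well-poised $_{6}\phi_{5}$ with parameters $\alpha q^{2k};\,q^{k+1}/a,\,q^{k+1}/b,\,q^{k+1}/c$ and argument $\alpha abc/q^{k+2}$, which evaluates in closed form by Rogers'. Applying the reflection identity $(Aq^{-k};q)_{\infty}=(A;q)_{\infty}(-A)^{k}q^{-\binom{k+1}{2}}(q/A;q)_{k}$ with $A=\alpha abc/q^{2}$ to handle the factor $(\alpha abc/q^{k+2};q)_{\infty}$ and reassembling all $k$-dependent pieces, the remaining $k$-sum should reorganize into the very-well-poised outer series in $n$ weighted by the $_{4}\phi_{3}(q^{-n},\alpha q^{n},\beta,\gamma;q/a,q/b,\alpha\beta\gamma abc/q;q,q)$ kernel appearing on the left of Theorem~\ref{t7-1}.

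The main obstacle lies precisely in this final bookkeeping: sign factors, powers of $q$, and the factor $(1-\alpha q^{2k})$ that arises from the ratio $(\alpha;q)_{2k}/(\alpha q;q)_{2k}$ must reshuffle exactly to reproduce the very-well-poised factor $\frac{(1-\alpha q^{2n})(\alpha,q/a,q/b,q/c;q)_{n}(\alpha abc/q^{2})^{n}}{(q,\alpha a,\alpha b,\alpha c;q)_{n}}$ weighting the $_{4}\phi_{3}$ in Theorem~\ref{t7-1}. If this direct route via \eqref{eq:10-1} proves too delicate to close, a cleaner alternative is Liu's operator-theoretic proof in \cite{L11}: define a single $q$-exponential operator in (say) $c$ that, applied to Rogers' $_{6}\phi_{5}$ summation, inserts the $_{4}\phi_{3}$ kernel on the series side, while its action on the infinite product on the product side is computed via Heine's $_{2}\phi_{1}$ $q$-Gauss summation and produces precisely the new factors $(\alpha\beta ab/q,\alpha\gamma ab/q,\alpha\beta\gamma abc/q^{2};q)_{\infty}$ and their denominator counterparts.
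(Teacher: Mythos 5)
Your opening move is fine: with your choice of $f$ and with $b$ replaced by $c$ in \eqref{eq:10-1}, the product side does reduce, after multiplication by $1-\alpha$, to the right-hand side of Theorem \ref{t7-1}. The proof breaks down at the interchange-of-summation step. After setting $n=k+m$, the only factors carrying $m$-dependence are those coming from $\frac{(1-\alpha q^{2n})(\alpha,q/a)_{n}(a/q)^{n}}{(q,\alpha a)_{n}}$ and $(q^{-n},\alpha q^{n})_{k}$; the quantities $f(\alpha q^{k+1})$ and $(q,\alpha c)_{k}$ depend on $k$ alone and sit outside the inner sum, so $b,c,\beta,\gamma$ cannot appear as parameters of the inner series. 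What you actually get for the inner sum is
\[
\sum_{m\geq0}\frac{(1-\alpha q^{2k+2m})(\alpha q^{2k},q^{k+1}/a)_{m}}{(q,\alpha aq^{k})_{m}}\Big(\frac{a}{q^{k+1}}\Big)^{m},
\]
a very-well-poised series with a single nontrivial parameter pair, not the three-pair $_{6}\phi_{5}$ you describe. This series is the $b=c=(q^{1-2k}/\alpha)^{1/2}$ degeneration of Rogers' summation, whose closed form carries the factor $(1)_{\infty}=0$ in the numerator: wherever it converges (namely $|a|<|q|^{k+1}$) it equals $0$, and for every larger $k$ it diverges because $|(q^{-n})_{k}|\sim|q|^{-nk}$ overwhelms $|a/q|^{n}$. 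Since the original double sum equals a nonzero product, this shows the rearrangement is illegitimate, and no variant of Rogers' summation can close the step. There is also a structural obstruction you would still face: a closed-form evaluation of the inner sum would collapse everything to a single sum over $k$, whereas the left-hand side of Theorem \ref{t7-1} is an irreducibly double sum (a very-well-poised outer series times a genuinely $n$-dependent terminating $_{4}\phi_{3}$).

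For comparison, the paper does not reprove Theorem \ref{t7-1}: it is quoted from \cite{L11}, where Liu established it by the operator method you mention only as a fallback. The paper's own contribution is Theorem \ref{t7-2}, whose $n=1$ case recovers Theorem \ref{t7-1}, and its proof keeps the double sum intact throughout: one applies the expansion formula with a product-ratio $f$ chosen so that the inner sum becomes a terminating \emph{balanced} $_{4}\phi_{3}$ in the ``wrong'' parameters $q^{-n},\alpha q^{n},\alpha\beta ab/q,\alpha\gamma ab/q;\alpha a,\alpha b,\alpha\beta\gamma ab/q$, and then transforms that inner sum in place by Sears' terminating $_{4}\phi_{3}$ transformation (the $U(n+1)$ version of \cite[eq.(III.15)]{GR}) into the kernel $_{4}\phi_{3}(q^{-n},\alpha q^{n},\beta,\gamma;q/a,q/b,\alpha\beta\gamma abc/q;q,q)$. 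No interchange of summation and no nonterminating evaluation of an inner series occurs. To repair your argument, replace your second paragraph by a Sears transformation of the inner $k$-sum (which also forces a different, product-ratio choice of $f$ so that the inner sum is balanced), or else carry out the operator proof of \cite{L11} in full.
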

Applying an expansion theorem \cite[Proposition 1.6]{L3} for the
analytic functions, Liu provided a simple proof of the above summation
formula.

In this section, we will establish an $A_{n}$ extension of Liu's
extension  by using the identity \eqref{eq:12-1} in Theorem \ref{t11-2}.
\begin{thm}
\label{t7-2} Let $\alpha,\beta,\gamma,a,b,c_{1},\cdots,c_{n}$ be
such that $\max\{|\alpha\beta abc_{1}\cdots c_{n}/q^{n+1}|,$\\
$|\alpha\gamma abc_{1}\cdots c_{n}/q^{n+1}|\}<1$; or, $c_{r}=q^{N_{r}},$
for $r=1,2,\cdots,n,$ and so the series terminate. Suppose that none
of the denominators in the following identity vanishes, then
\begin{align*}
 & \frac{(\alpha\beta ab/q,\alpha\gamma ab/q,\alpha ac_{1}\cdots c_{n}/q^{n},\alpha bc_{1}\cdots c_{n}/q^{n})_{\infty}}{(\alpha a,\alpha b,\alpha\beta abc_{1}\cdots c_{n}/q^{n+1},\alpha\gamma abc_{1}\cdots c_{n}/q^{n+1})_{\infty}}\\
 & \times\prod_{i=1}^{n}\frac{(\alpha x_{i},\alpha\beta\gamma abc_{i}x_{i}/q^{2})_{\infty}}{(\alpha\beta\gamma abx_{i}/q,\alpha c_{i}x_{i})_{\infty}}\\
 & =\sum_{{k_{r}\geq0\atop r=1,\cdots,n}}\prod_{1\leq r<s\leq n}\frac{1-q^{k_{r}-k_{s}}x_{r}/x_{s}}{1-x_{r}/x_{s}}\prod_{r,s=1}^{n}\frac{\left(qx_{r}/x_{s}c_{s}\right)_{k_{r}}}{\left(qx_{r}/x_{s}\right)_{k_{r}}}\\
 & \times\prod_{r=1}^{n}\frac{(1-\alpha x_{r}q^{k_{r}+|\boldsymbol{k}|})\left(\alpha x_{r}\right)_{|\boldsymbol{k}|}}{\left(\alpha c_{r}x_{r}\right)_{|\boldsymbol{k}|}}q^{\sum_{r=1}^{n}(r-1)k_{r}}\\
 & \times(\alpha a,\alpha b)_{|\boldsymbol{k}|}^{-1}\prod_{i=1}^{n}(q/ax_{i},qx_{i}/b)_{k_{i}}\\
 & \times\left(\frac{\alpha abc_{1}\cdots c_{n}}{q^{n+1}}\right)^{|\boldsymbol{k}|}q^{e_{2}(k_{1},k_{2}\cdots,k_{n})}\prod_{i=1}^{n}x_{i}^{k_{i}}\\
 & \times\sum_{{0\leq m_{i}\leq k_{i}\atop r=1,\cdots,n}}\prod_{1\leq r<s\leq n}\frac{1-q^{m_{r}-m_{s}}x_{r}/x_{s}}{1-x_{r}/x_{s}}q^{\sum_{r=1}^{n}rm_{r}}\\
 & \;\times\prod_{r,s=1}^{n}\frac{\left(q^{-k_{s}}x_{r}/x_{s}\right)_{m_{r}}}{\left(qx_{r}/x_{s}\right)_{m_{r}}}\prod_{i=1}^{n}\frac{(\alpha q^{|\boldsymbol{k}|}x_{i},\beta x_{i},\gamma x_{i})_{m_{i}}}{(\alpha\beta\gamma abx_{i}/q,qx_{i}/a,qx_{i}/b)_{m_{i}}},
\end{align*}
where $e_{2}(k_{1},\cdots,k_{n})$ is the second elementary symmetric
function of $\{k_{1},\cdots,k_{n}\}.$
\end{thm}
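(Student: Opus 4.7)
The plan is to apply identity \eqref{eq:12-1} of Theorem \ref{t11-1} with the substitutions $a_r\mapsto c_r$ and $b\mapsto\beta\gamma ab/q$, to choose a single-variable test function $f$ that reproduces the product side of Theorem \ref{t7-2} exactly, and then to transform the resulting inner $\boldsymbol{m}$-sum into the per-index form demanded by Theorem \ref{t7-2}. The natural choice is
\[
f(y)=\frac{(\alpha\beta ab/q,\alpha\gamma ab/q,ay/q,by/q)_\infty}{(\alpha a,\alpha b,\beta aby/q^{2},\gamma aby/q^{2})_\infty},
\]
for which a direct substitution yields
\[
f(\alpha c_1\cdots c_n q^{1-n})=\frac{(\alpha\beta ab/q,\alpha\gamma ab/q,\alpha a c_1\cdots c_n/q^{n},\alpha b c_1\cdots c_n/q^{n})_\infty}{(\alpha a,\alpha b,\alpha\beta abc_1\cdots c_n/q^{n+1},\alpha\gamma abc_1\cdots c_n/q^{n+1})_\infty},
\]
which is precisely the non-product block on the left-hand side of Theorem \ref{t7-2}.

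After these substitutions, multiplying both sides of \eqref{eq:12-1} by $\prod_r(1-\alpha x_r)$ converts $\prod_r(\alpha qx_r)_\infty$ on the left into $\prod_r(\alpha x_r)_\infty$ and $\prod_r(1-\alpha x_r q^{k_r+|\boldsymbol{k}|})/(1-\alpha x_r)$ on the right into $\prod_r(1-\alpha x_r q^{k_r+|\boldsymbol{k}|})$, so the left-hand side matches that of Theorem \ref{t7-2} verbatim. The telescoping identity $(A)_\infty/(Aq^{m})_\infty=(A)_m$ gives $f(\alpha q^{1+|\boldsymbol{m}|})=(\alpha\beta ab/q,\alpha\gamma ab/q)_{|\boldsymbol{m}|}/(\alpha a,\alpha b)_{|\boldsymbol{m}|}$, so the right-hand side becomes a double sum whose inner $\boldsymbol{m}$-sum carries this $|\boldsymbol{m}|$-Pochhammer block together with the standard factors $\prod_r(\alpha x_r q^{|\boldsymbol{k}|})_{m_r}/(\alpha\beta\gamma abx_r/q)_{m_r}$ produced by \eqref{eq:12-1} after $b\mapsto\beta\gamma ab/q$, and whose outer $\boldsymbol{k}$-sum carries the weight $(c_1\cdots c_n/q^{n})^{|\boldsymbol{k}|}$.

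The main obstacle is the final step: rewriting this double sum in the per-index form demanded by Theorem \ref{t7-2}, whose inner $\boldsymbol{m}$-sum uses $\prod_i(\beta x_i,\gamma x_i)_{m_i}/(qx_i/a,qx_i/b)_{m_i}$ in place of the $|\boldsymbol{m}|$-Pochhammer block, and whose outer $\boldsymbol{k}$-sum carries the richer weight $(\alpha ab c_1\cdots c_n/q^{n+1})^{|\boldsymbol{k}|}q^{e_2(\boldsymbol{k})}\prod_i x_i^{k_i}(\alpha a,\alpha b)_{|\boldsymbol{k}|}^{-1}\prod_i(q/ax_i,qx_i/b)_{k_i}$. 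This is an $A_n$ Sears--Watson-type $_4\phi_3$-to-$_4\phi_3$ transformation that I expect to obtain by applying Milne's $U(n+1)$ $q$-Gauss summation---in the form used to prove Theorem \ref{t11-5} above---twice on the inner $\boldsymbol{m}$-sum, once to each $|\boldsymbol{m}|$-Pochhammer, so as to trade them for per-index Pochhammers while absorbing the missing outer $\boldsymbol{k}$-weights into the $\boldsymbol{k}$-sum. Once this transformation is identified, the remaining verification reduces to Pochhammer bookkeeping via $(A)_{|\boldsymbol{k}|+|\boldsymbol{m}|}=(A)_{|\boldsymbol{k}|}(Aq^{|\boldsymbol{k}|})_{|\boldsymbol{m}|}$ and $\binom{|\boldsymbol{k}|}{2}=e_2(\boldsymbol{k})+\sum_r\binom{k_r}{2}$, with analytic continuation in the parameters extending the identity beyond the terminating regime guaranteed by the convergence hypothesis.
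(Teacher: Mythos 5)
Your first two steps are sound and in fact give a slightly cleaner route to the paper's key intermediate identity \eqref{eq:c1-3} than the paper itself takes. The paper works from the Sixth extension \eqref{eq:12-2}, i.e.\ the $n$-variable form, and has to engineer an elaborate $f(y_{1},\ldots,y_{n})$ containing both a block in the product $y_{1}\cdots y_{n}$ and per-index blocks in each $y_{i}$; you instead use the single-variable Fifth extension \eqref{eq:12-1} with $a_{r}\mapsto c_{r}$, $b\mapsto\beta\gamma ab/q$ and a one-variable $f$, letting the built-in factors $\prod_{r}(\alpha bx_{r})_{m_{r}}^{-1}$ supply the per-index denominators $\prod_{r}(\alpha\beta\gamma abx_{r}/q)_{m_{r}}^{-1}$. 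Your computations of $f(\alpha c_{1}\cdots c_{n}q^{1-n})$ and $f(\alpha q^{1+|\boldsymbol{m}|})$ check out, and after multiplying by $\prod_{r}(1-\alpha x_{r})$ you land exactly on \eqref{eq:c1-3} (with $y_{i}=c_{i}/q$). Up to that point the two arguments are equivalent.

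The genuine gap is the step you yourself flag as the ``main obstacle.'' The inner $\boldsymbol{m}$-sum of \eqref{eq:c1-3} is a terminating \emph{balanced} $_{4}\phi_{3}$-type multiple series (in the $n=1$ case it is $\sum_{m}\frac{(q^{-k},\alpha q^{k},\alpha\beta ab/q,\alpha\gamma ab/q)_{m}}{(q,\alpha\beta\gamma ab/q,\alpha a,\alpha b)_{m}}q^{m}$, which is Saalsch\"utzian), and converting it to the per-index form with $\prod_{i}(\beta x_{i},\gamma x_{i})_{m_{i}}/(qx_{i}/a,qx_{i}/b)_{m_{i}}$ while generating the extra outer weight $(\alpha ab/q)^{|\boldsymbol{k}|}q^{e_{2}(\boldsymbol{k})}\prod_{i}x_{i}^{k_{i}}(\alpha a,\alpha b)_{|\boldsymbol{k}|}^{-1}\prod_{i}(q/ax_{i},qx_{i}/b)_{k_{i}}$ is precisely the $U(n+1)$ Sears $_{4}\phi_{3}$ transformation of Milne and Newcomb, which the paper invokes as a known theorem (displayed as \eqref{eq:c1-2}, citing \cite[Theorem 5.2]{MN}). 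Your proposed substitute --- ``apply Milne's $U(n+1)$ $q$-Gauss summation twice, once to each $|\boldsymbol{m}|$-Pochhammer'' --- does not work as stated: first, the identity used in the proof of Theorem \ref{t11-5} is Milne's balanced $_{3}\phi_{2}$ (Saalsch\"utz-type) summation \eqref{eq:2-2}, not a $q$-Gauss summation; second, a summation theorem evaluates a sum in closed form and cannot be ``applied to one Pochhammer factor'' of a $_{4}\phi_{3}$-type series --- the classical derivation of Sears' transformation from $q$-Saalsch\"utz requires a specific double-series expansion and interchange, and its $A_{n}$ analogue is exactly the nontrivial content of the Milne--Newcomb theorem, not a two-line consequence of \eqref{eq:2-2}. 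Without either citing that transformation or actually carrying out the multivariate rearrangement (which is far from ``Pochhammer bookkeeping''), the proof is incomplete at its decisive step.
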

\begin{proof}
Replacing $a_{r}$ by $qy_{r}$ for $r=1,2,\cdots,n$ in the identity
\eqref{eq:12-1} of Theorem \ref{t11-2} we have
\begin{equation}
\begin{aligned} & f(\alpha qy_{1},\cdots,\alpha qy_{n})\prod_{i=1}^{n}\frac{(\alpha qx_{i},\alpha bx_{i}y_{i})_{\infty}}{(\alpha bx_{i},\alpha qx_{i}y_{i})_{\infty}}\\
 & =\sum_{{k_{r}\geq0\atop r=1,\cdots,n}}\prod_{1\leq r<s\leq n}\frac{1-q^{k_{r}-k_{s}}x_{r}/x_{s}}{1-x_{r}/x_{s}}\prod_{r,s=1}^{n}\frac{\left(x_{r}/x_{s}y_{s}\right)_{k_{r}}}{\left(qx_{r}/x_{s}\right)_{k_{r}}}\\
 & \times\prod_{r=1}^{n}\frac{1-\alpha x_{r}q^{k_{r}+|\boldsymbol{k}|}}{1-\alpha x_{r}}\prod_{r=1}^{n}\frac{\left(\alpha x_{r}\right)_{|\boldsymbol{k}|}}{\left(\alpha qx_{r}y_{r}\right)_{|\boldsymbol{k}|}}\left(y_{1}\cdots y_{n}\right)^{|\boldsymbol{k}|}q^{\sum_{r=1}^{n}(r-1)k_{r}}\\
 & \times\sum_{{0\leq m_{r}\leq k_{r}\atop r=1,\cdots,n}}\prod_{1\leq r<s\leq n}\frac{1-q^{m_{r}-m_{s}}x_{r}/x_{s}}{1-x_{r}/x_{s}}\prod_{r,s=1}^{n}\frac{\left(q^{-k_{s}}x_{r}/x_{s}\right)_{m_{r}}}{\left(qx_{r}/x_{s}\right)_{m_{r}}}\\
 & \;\times\prod_{r=1}^{n}\frac{\left(\alpha x_{r}q^{|\boldsymbol{k}|}\right)_{m_{r}}}{(\alpha bx_{r})_{m_{r}}}q^{\sum_{r=1}^{n}rm_{r}}f(\alpha q^{1+m_{1}},\cdots,\alpha q^{1+m_{n}}).
\end{aligned}
\label{eq:7-1}
\end{equation}
Now we choose 
\begin{align*}
f(y_{1},\cdots,y_{n}) & =\frac{(A_{2},A_{3},B_{2}y_{1}\cdots y_{n}/\alpha^{n}q^{n},B_{3}y_{1}\cdots y_{n}/\alpha^{n}q^{n})_{\infty}}{(B_{2},B_{3},A_{2}y_{1}\cdots y_{n}/\alpha^{n}q^{n},A_{3}y_{1}\cdots y_{n}/\alpha^{n}q^{n})_{\infty}}\\
 & \;\cdot\prod_{i=1}^{n}\frac{(A_{1}x_{i},B_{1}x_{i}y_{i}/\alpha q,\alpha bx_{i},x_{i}y_{i})_{\infty}}{(B_{1}x_{i},A_{1}x_{i}y_{i}/\alpha q,\alpha qx_{i},bx_{i}y_{i}/q)_{\infty}}.
\end{align*}
Then
\begin{equation}
\begin{aligned}f(\alpha qy_{1},\cdots,\alpha qy_{n}) & =\frac{(A_{2},A_{3},B_{2}y_{1}\cdots y_{n},B_{3}y_{1}\cdots y_{n})_{\infty}}{(B_{2},B_{3},A_{2}y_{1}\cdots y_{n},A_{3}y_{1}\cdots y_{n})_{\infty}}\\
 & \;\cdot\prod_{i=1}^{n}\frac{(A_{1}x_{i},B_{1}x_{i}y_{i},\alpha bx_{i},\alpha qx_{i}y_{i})_{\infty}}{(B_{1}x_{i},A_{1}x_{i}y_{i},\alpha qx_{i},\alpha bx_{i}y_{i})_{\infty}}
\end{aligned}
\label{eq:7-2}
\end{equation}
and
\begin{equation}
f(\alpha q^{1+m_{1}},\cdots,\alpha q^{1+m_{n}})=\frac{(A_{2},A_{3})_{|\boldsymbol{m}|}}{(B_{2},B_{3})_{|\boldsymbol{m}|}}\prod_{i=1}^{n}\frac{(A_{1}x_{i},\alpha bx_{i})_{m_{i}}}{(B_{1}x_{i},\alpha qx_{i})_{m_{i}}}.\label{eq:7-3}
\end{equation}
Substituting \eqref{eq:7-2} and \eqref{eq:7-3} into \eqref{eq:7-1}
and then simplifying we get
\[
\begin{aligned} & \frac{(A_{2},A_{3},B_{2}y_{1}\cdots y_{n},B_{3}y_{1}\cdots y_{n})_{\infty}}{(B_{2},B_{3},A_{2}y_{1}\cdots y_{n},A_{3}y_{1}\cdots y_{n})_{\infty}}\prod_{i=1}^{n}\frac{(A_{1}x_{i},B_{1}x_{i}y_{i})_{\infty}}{(B_{1}x_{i},A_{1}x_{i}y_{i})_{\infty}}\\
 & =\sum_{{k_{r}\geq0\atop r=1,\cdots,n}}\prod_{1\leq r<s\leq n}\frac{1-q^{k_{r}-k_{s}}x_{r}/x_{s}}{1-x_{r}/x_{s}}\prod_{r,s=1}^{n}\frac{\left(x_{r}/x_{s}y_{s}\right)_{k_{r}}}{\left(qx_{r}/x_{s}\right)_{k_{r}}}\\
 & \times\prod_{r=1}^{n}\frac{1-\alpha x_{r}q^{k_{r}+|\boldsymbol{k}|}}{1-\alpha x_{r}}\prod_{r=1}^{n}\frac{\left(\alpha x_{r}\right)_{|\boldsymbol{k}|}}{\left(\alpha qx_{r}y_{r}\right)_{|\boldsymbol{k}|}}\left(y_{1}\cdots y_{n}\right)^{|\boldsymbol{k}|}q^{\sum_{r=1}^{n}(r-1)k_{r}}\\
 & \times\sum_{{0\leq m_{r}\leq k_{r}\atop r=1,\cdots,n}}\prod_{1\leq r<s\leq n}\frac{1-q^{m_{r}-m_{s}}x_{r}/x_{s}}{1-x_{r}/x_{s}}\prod_{r,s=1}^{n}\frac{\left(q^{-k_{s}}x_{r}/x_{s}\right)_{m_{r}}}{\left(qx_{r}/x_{s}\right)_{m_{r}}}q^{\sum_{r=1}^{n}rm_{r}}\\
 & \;\times\frac{(A_{2},A_{3})_{|\boldsymbol{m}|}}{(B_{2},B_{3})_{|\boldsymbol{m}|}}\prod_{r=1}^{n}\frac{\left(\alpha x_{r}q^{|\boldsymbol{k}|},A_{1}x_{r}\right)_{m_{r}}}{(B_{1}x_{r},\alpha qx_{r})_{m_{r}}}.
\end{aligned}
\]
Taking $A_{1}\mapsto\alpha q,A_{2}\mapsto\alpha\beta ab/q,A_{3}\mapsto\alpha\gamma ab/q,B_{1}\mapsto\alpha\beta\gamma ab/q,B_{2}\mapsto\alpha a,B_{3}\mapsto\alpha b$
in the above identity and then multiplying both sides of the resulting
identity by $\prod_{i=1}^{n}(1-\alpha x_{i})$ we arrive at
\begin{equation}
\begin{aligned} & \frac{(\alpha\beta ab/q,\alpha\gamma ab/q,\alpha ay_{1}\cdots y_{n},\alpha by_{1}\cdots y_{n})_{\infty}}{(\alpha a,\alpha b,\alpha\beta aby_{1}\cdots y_{n}/q,\alpha\gamma aby_{1}\cdots y_{n}/q)_{\infty}}\prod_{i=1}^{n}\frac{(\alpha x_{i},\alpha\beta\gamma abx_{i}y_{i}/q)_{\infty}}{(\alpha\beta\gamma abx_{i}/q,\alpha qx_{i}y_{i})_{\infty}}\\
 & =\sum_{{k_{r}\geq0\atop r=1,\cdots,n}}\prod_{1\leq r<s\leq n}\frac{1-q^{k_{r}-k_{s}}x_{r}/x_{s}}{1-x_{r}/x_{s}}\prod_{r,s=1}^{n}\frac{\left(x_{r}/x_{s}y_{s}\right)_{k_{r}}}{\left(qx_{r}/x_{s}\right)_{k_{r}}}\\
 & \times\prod_{r=1}^{n}\frac{(1-\alpha x_{r}q^{k_{r}+|\boldsymbol{k}|})\left(\alpha x_{r}\right)_{|\boldsymbol{k}|}}{\left(\alpha qx_{r}y_{r}\right)_{|\boldsymbol{k}|}}\left(y_{1}\cdots y_{n}\right)^{|\boldsymbol{k}|}q^{\sum_{r=1}^{n}(r-1)k_{r}}\\
 & \;\times\sum_{{0\leq m_{r}\leq k_{r}\atop r=1,\cdots,n}}\prod_{1\leq r<s\leq n}\frac{1-q^{m_{r}-m_{s}}x_{r}/x_{s}}{1-x_{r}/x_{s}}\prod_{r,s=1}^{n}\frac{\left(q^{-k_{s}}x_{r}/x_{s}\right)_{m_{r}}}{\left(qx_{r}/x_{s}\right)_{m_{r}}}q^{\sum_{r=1}^{n}rm_{r}}\\
 & \quad\times\frac{(\alpha\beta ab/q,\alpha\gamma ab/q)_{|\boldsymbol{m}|}}{(\alpha a,\alpha b)_{|\boldsymbol{m}|}}\prod_{r=1}^{n}\frac{\left(\alpha x_{r}q^{|\boldsymbol{k}|}\right)_{m_{r}}}{\left(\alpha\beta\gamma abx_{r}/q\right)_{m_{r}}}.
\end{aligned}
\label{eq:c1-3}
\end{equation}
It follows from \cite[eq.(I.8)]{GR} that 
\[
(ax_{i}q^{-k_{i}},bq^{-k_{i}}/x_{i})_{k_{i}}=(q/ax_{i},qx_{i}/b)_{k_{i}}(ab)^{k_{i}}q^{-k_{i}-k_{i}^{2}}.
\]
Then
\begin{equation}
\prod_{i=1}^{n}(ax_{i}q^{-k_{i}},bq^{-k_{i}}/x_{i})_{k_{i}}=(ab)^{|\boldsymbol{k}|}q^{-|k|-\sum_{i=1}^{n}k_{i}^{2}}\prod_{i=1}^{n}(q/ax_{i},qx_{i}/b)_{k_{i}}.\label{eq:c1-1}
\end{equation}

Recall a $U(n+1)$ Sears $_{4}\phi_{3}$ transformation \cite[Theorem 5.2]{MN}:
\begin{align*}
 & \sum_{{0\leq y_{r}\leq N_{r}\atop r=1,\cdots,n}}\prod_{1\leq r<s\leq n}\frac{1-q^{y_{r}-y_{s}}x_{r}/x_{s}}{1-x_{r}/x_{s}}q^{\sum_{r=1}^{n}rj_{r}}\\
 & \times\prod_{r,s=1}^{n}\frac{\left(q^{-N_{s}}x_{r}/x_{s}\right)_{y_{r}}}{\left(qx_{r}/x_{s}\right)_{y_{r}}}\prod_{r=1}^{n}\frac{\left(ax_{r}\right)_{y_{r}}}{(dx_{r})_{y_{r}}}\cdot\frac{(b,c)_{|\boldsymbol{y}|}}{(e,f)_{|\boldsymbol{y}|}}\\
 & =(e,f)_{|\boldsymbol{N}|}^{-1}\prod_{i=1}^{n}(ex_{i}q^{|\boldsymbol{N}|-N_{i}}/a,fq^{|\boldsymbol{N}|-N_{i}}/ax_{i})_{N_{i}}\\
 & \times a^{|\boldsymbol{N}|}q^{-e_{2}(N_{1},N_{2}\cdots,N_{n})}\prod_{i=1}^{n}x_{i}^{N_{i}}\\
 & \times\sum_{{0\leq y_{i}\leq N_{i}\atop r=1,\cdots,n}}\prod_{1\leq r<s\leq n}\frac{1-q^{y_{r}-y_{s}}x_{r}/x_{s}}{1-x_{r}/x_{s}}q^{\sum_{r=1}^{n}ry_{r}}\\
 & \;\times\prod_{r,s=1}^{n}\frac{\left(q^{-N_{s}}x_{r}/x_{s}\right)_{y_{r}}}{\left(qx_{r}/x_{s}\right)_{y_{r}}}\prod_{i=1}^{n}\frac{(ax_{i},\:dx_{i}/b,dx_{i}/c)_{y_{i}}}{(dx_{i},ax_{i}q^{1-|\boldsymbol{N}|}/e,ax_{i}q^{1-|\boldsymbol{N}|}/f)_{y_{i}}},
\end{align*}
where $abc=defq^{|\boldsymbol{N}|-1}$ and $e_{2}(N_{1},N_{2},\cdots,N_{n})$
is the second elementary symmetric function of $\{N_{1},N_{2},\cdots,N_{n}\}.$
Applying the $U(n+1)$ Sears $_{4}\phi_{3}$ transformation with $N_{r}\mapsto k_{r},a\mapsto\alpha q^{|\boldsymbol{k}|},b\mapsto\alpha\beta ab/q,c\mapsto\alpha\gamma ab/q,d\mapsto\alpha\beta\gamma ab/q,e\mapsto\alpha a,f\mapsto\alpha b$
and then substituting \eqref{eq:c1-1} into the resulting identity
we get
\begin{equation}
\begin{aligned} & \sum_{{0\leq m_{r}\leq k_{r}\atop r=1,\cdots,n}}\prod_{1\leq r<s\leq n}\frac{1-q^{m_{r}-m_{s}}x_{r}/x_{s}}{1-x_{r}/x_{s}}\prod_{r,s=1}^{n}\frac{\left(q^{-k_{s}}x_{r}/x_{s}\right)_{m_{r}}}{\left(qx_{r}/x_{s}\right)_{m_{r}}}q^{\sum_{r=1}^{n}rm_{r}}\\
 & \quad\times\frac{(\alpha\beta ab/q,\alpha\gamma ab/q)_{|\boldsymbol{m}|}}{(\alpha a,\alpha b)_{|\boldsymbol{m}|}}\prod_{r=1}^{n}\frac{\left(\alpha x_{r}q^{|\boldsymbol{k}|}\right)_{m_{r}}}{\left(\alpha\beta\gamma abx_{r}/q\right)_{m_{r}}}\\
 & =(\alpha a,\alpha b)_{|\boldsymbol{k}|}^{-1}\prod_{i=1}^{n}(q/ax_{i},qx_{i}/b)_{k_{i}}\\
 & \times(\alpha ab/q)^{|\boldsymbol{k}|}q^{e_{2}(k_{1},k_{2}\cdots,k_{n})}\prod_{i=1}^{n}x_{i}^{k_{i}}\\
 & \times\sum_{{0\leq m_{i}\leq k_{i}\atop r=1,\cdots,n}}\prod_{1\leq r<s\leq n}\frac{1-q^{m_{r}-m_{s}}x_{r}/x_{s}}{1-x_{r}/x_{s}}q^{\sum_{r=1}^{n}rm_{r}}\\
 & \;\times\prod_{r,s=1}^{n}\frac{\left(q^{-k_{s}}x_{r}/x_{s}\right)_{m_{r}}}{\left(qx_{r}/x_{s}\right)_{m_{r}}}\prod_{i=1}^{n}\frac{(\alpha q^{|\boldsymbol{k}|}x_{i},\beta x_{i},\gamma x_{i})_{m_{i}}}{(\alpha\beta\gamma abx_{i}/q,qx_{i}/a,qx_{i}/b)_{m_{i}}}.
\end{aligned}
\label{eq:c1-2}
\end{equation}
Then the result follows by substituting \eqref{eq:c1-2} into \eqref{eq:c1-3}
and then replacing $y_{i}$ by $c_{i}/q$ for $i=1,2,\cdots,n$ in
the resulting identity. 
\end{proof}
The summation formula in Theorem \ref{t7-2} is an $A_{n}$ extension
of Liu's extension of the Rogers non-terminating $_{6}\phi_{5}$ summation
formula; replace $x_{i}$ by $x_{i}/x_{n}$ in the $n=1$ case of
the identity in Theorem \ref{t7-2}.

If $\gamma=0,$ then the identity in Theorem \ref{t7-2} becomes the
following summation formula.
\begin{thm}
\label{t7-3} Let $\alpha,\beta,a,b,c_{1},\cdots,c_{n}$ be such that
$|\alpha\beta abc_{1}\cdots c_{n}/q^{n+1}|$$<1$; or, $c_{r}=q^{N_{r}},$
for $r=1,2,\cdots,n,$ and so the series terminate. Suppose that none
of the denominators in the following identity vanishes, then
\begin{align*}
 & \frac{(\alpha\beta ab/q,\alpha ac_{1}\cdots c_{n}/q^{n},\alpha bc_{1}\cdots c_{n}/q^{n})_{\infty}}{(\alpha a,\alpha b,\alpha\beta abc_{1}\cdots c_{n}/q^{n+1})_{\infty}}\prod_{i=1}^{n}\frac{(\alpha x_{i})_{\infty}}{(\alpha c_{i}x_{i})_{\infty}}\\
 & =\sum_{{k_{r}\geq0\atop r=1,\cdots,n}}\prod_{1\leq r<s\leq n}\frac{1-q^{k_{r}-k_{s}}x_{r}/x_{s}}{1-x_{r}/x_{s}}\prod_{r,s=1}^{n}\frac{\left(qx_{r}/x_{s}c_{s}\right)_{k_{r}}}{\left(qx_{r}/x_{s}\right)_{k_{r}}}\\
 & \times\prod_{r=1}^{n}\frac{(1-\alpha x_{r}q^{k_{r}+|\boldsymbol{k}|})\left(\alpha x_{r}\right)_{|\boldsymbol{k}|}}{\left(\alpha c_{r}x_{r}\right)_{|\boldsymbol{k}|}}q^{\sum_{r=1}^{n}(r-1)k_{r}}\\
 & \times(\alpha a,\alpha b)_{|\boldsymbol{k}|}^{-1}\prod_{i=1}^{n}(q/ax_{i},qx_{i}/b)_{k_{i}}\\
 & \times\left(\frac{\alpha abc_{1}\cdots c_{n}}{q^{n+1}}\right)^{|\boldsymbol{k}|}q^{e_{2}(k_{1},k_{2}\cdots,k_{n})}\prod_{i=1}^{n}x_{i}^{k_{i}}\\
 & \times\sum_{{0\leq m_{i}\leq k_{i}\atop r=1,\cdots,n}}\prod_{1\leq r<s\leq n}\frac{1-q^{m_{r}-m_{s}}x_{r}/x_{s}}{1-x_{r}/x_{s}}q^{\sum_{r=1}^{n}rm_{r}}\\
 & \;\times\prod_{r,s=1}^{n}\frac{\left(q^{-k_{s}}x_{r}/x_{s}\right)_{m_{r}}}{\left(qx_{r}/x_{s}\right)_{m_{r}}}\prod_{i=1}^{n}\frac{(\alpha q^{|\boldsymbol{k}|}x_{i},\beta x_{i})_{m_{i}}}{(qx_{i}/a,qx_{i}/b)_{m_{i}}},
\end{align*}
where $e_{2}(k_{1},\cdots,k_{n})$ is the second elementary symmetric
function of $\{k_{1},\cdots,k_{n}\}.$
\end{thm}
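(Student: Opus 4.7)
The plan is to obtain Theorem \ref{t7-3} as the specialization $\gamma = 0$ of Theorem \ref{t7-2}. The statement is explicitly introduced this way in the text, so no new tool is needed beyond verifying that both sides of Theorem \ref{t7-2} simplify correctly upon setting $\gamma = 0$. In particular, all of the heavy lifting---the choice of $f$ in \eqref{eq:12-1} and the application of the $U(n+1)$ Sears $_{4}\phi_{3}$ transformation---has already been performed in the proof of the parent theorem, so nothing of that sort needs to be redone here.

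On the left-hand side of Theorem \ref{t7-2}, every $q$-shifted factorial carrying a $\gamma$, namely $(\alpha\gamma ab/q)_{\infty}$, $(\alpha\gamma abc_{1}\cdots c_{n}/q^{n+1})_{\infty}$, $(\alpha\beta\gamma abc_{i}x_{i}/q^{2})_{\infty}$, and $(\alpha\beta\gamma abx_{i}/q)_{\infty}$, reduces to $1$ when $\gamma = 0$. What remains is precisely the product displayed on the left of Theorem \ref{t7-3}. On the right-hand side, the only $\gamma$-dependent pieces sit inside the inner $\boldsymbol{m}$-sum as $(\gamma x_{i})_{m_{i}}$ in the numerator and $(\alpha\beta\gamma abx_{i}/q)_{m_{i}}$ in the denominator; setting $\gamma = 0$ turns each into $(0)_{m_{i}} = 1$, leaving exactly the factor $\prod_{i=1}^{n}(\alpha q^{|\boldsymbol{k}|}x_{i},\beta x_{i})_{m_{i}}/(qx_{i}/a,qx_{i}/b)_{m_{i}}$ appearing in Theorem \ref{t7-3}. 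Every other factor (the Vandermonde-type product, the $(qx_{r}/x_{s}c_{s})_{k_{r}}/(qx_{r}/x_{s})_{k_{r}}$, the ``$(\alpha x_{r})$'' factors, the $e_{2}(k_{1},\ldots,k_{n})$ exponent, etc.) is independent of $\gamma$ and survives unchanged.

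The only book-keeping issue is the convergence hypothesis. In Theorem \ref{t7-2} the condition
\[
\max\{|\alpha\beta abc_{1}\cdots c_{n}/q^{n+1}|,\,|\alpha\gamma abc_{1}\cdots c_{n}/q^{n+1}|\}<1
\]
collapses, after $\gamma = 0$, to $|\alpha\beta abc_{1}\cdots c_{n}/q^{n+1}|<1$, which is exactly the hypothesis of Theorem \ref{t7-3}; in the terminating case $c_{r} = q^{N_{r}}$ both sums are finite and the substitution $\gamma = 0$ is immediate. I do not foresee a real obstacle: the only thing to check beyond routine simplification is that none of the $\gamma$-bearing symbols sits in a denominator in a way that would vanish as $\gamma \to 0$, but in every instance the relevant factor is a $q$-shifted factorial evaluating to $1$ rather than $0$, so the specialization is fully legitimate.
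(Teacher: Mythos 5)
Your proposal is correct and is exactly the paper's own derivation: the text introduces Theorem \ref{t7-3} with the words ``If $\gamma=0$, then the identity in Theorem \ref{t7-2} becomes the following summation formula,'' and your verification that every $\gamma$-bearing $q$-shifted factorial (on both sides) reduces to $1$ and that the convergence hypothesis collapses appropriately is all that is required.
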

The identity in Theorem \ref{t7-3} is an $A_{n}$ extension of the
following summation formula:

\begin{align*}
 & \sum_{n=0}^{\infty}\frac{(1-\alpha q^{2n})(\alpha,q/a,q/b,q/c)_{n}}{(q,\alpha a,\alpha b,\alpha c)_{n}}\bigg(\frac{\alpha abc}{q^{2}}\bigg)^{n}\text{}_{3}\phi_{2}\left(\begin{matrix}q^{-n},\alpha q^{n},\beta\\
q/a,q/b
\end{matrix};q,q\right)\\
 & =\frac{(\alpha,\alpha ac/q,\alpha bc/q,\alpha\beta ab/q)_{\infty}}{(\alpha a,\alpha b,\alpha c,\alpha\beta abc/q^{2})_{\infty}},
\end{align*}
where $|\alpha\beta abc/q^{2}|<1.$ This formula is equivalent to
a summation theorem due to Ismail, Rahman and Suslov \cite[Theorem 5.1]{IRS}.

\section{\label{sec:9} An $A_{n}$ extension of a generalization of Fang's
identity}

Fang \cite{F07} applied a $q$-exponential operator constructed in
\cite{F07-2} to the Rogers-Fine identity to establish the following
interesting identity \cite[eq.(9)]{F07}:  

\begin{equation}
\begin{aligned}\sum_{n=0}^{M}\frac{(q^{-M},c)_{n}}{(\beta,bc)_{n}}\tau^{n} & =\sum_{n=0}^{M}\frac{(q^{-M},q^{1-M}\tau/\beta,c)_{n}(1-\tau q^{2n-M})}{(\beta,bc)_{n}(\tau)_{n+1}}(\beta\tau)^{n}q^{n^{2}-n}\\
 & \;\times\text{}_{3}\phi_{2}\left(\begin{matrix}q^{-n},b,q^{1-n}/\beta\\
q^{1-M}\tau/\beta,q^{1-n}/c
\end{matrix};q,q\right),
\end{aligned}
\label{eq:9-1}
\end{equation}
where $M$ is a nonnegative integer. Applying this $q$-exponential
operator repeatedly, Fang obtained a very general identity, from which
an extension of the terminating very-well-poised $_{6}\phi_{5}$ summation
formula was derived.  The formula \eqref{eq:9-1} is important in
Fang's paper \cite{F07}. It can be extended to the following non-terminating
identity:
\begin{equation}
\begin{aligned}\sum_{n=0}^{\infty}\frac{(1/y,c)_{n}}{(\beta,bc)_{n}}(y\tau)^{n} & =\sum_{n=0}^{\infty}\frac{(1/y,q\tau/\beta,c)_{n}(1-\tau q^{2n})}{(\beta,bc)_{n}(y\tau)_{n+1}}(y\beta\tau)^{n}q^{n^{2}-n}\\
 & \;\times\text{}_{3}\phi_{2}\left(\begin{matrix}q^{-n},b,q^{1-n}/\beta\\
q\tau/\beta,q^{1-n}/c
\end{matrix};q,q\right),
\end{aligned}
\label{eq:9-2}
\end{equation}
where $|y\tau|<1.$ Applying the terminating $_{3}\phi_{2}$ transformation
formula in \cite[eq.(III.13)]{GR} to the inner sum $_{3}\phi_{2}$
on the right side of \eqref{eq:9-2}, we obtain an equivalent identity:
\begin{equation}
\begin{aligned}\sum_{n=0}^{\infty}\frac{(1/y,c)_{n}}{(\beta,bc)_{n}}(y\tau)^{n} & =\sum_{n=0}^{\infty}\frac{(1/y,q\tau/\beta)_{n}(1-\tau q^{2n})}{(\beta)_{n}(y\tau)_{n+1}}(y\beta\tau)^{n}q^{n^{2}-n}\\
 & \;\times\text{}_{3}\phi_{2}\left(\begin{matrix}q^{-n},\tau q^{n},b\\
q\tau/\beta,bc
\end{matrix};q,cq/\beta\right),
\end{aligned}
\label{eq:9-5}
\end{equation}
where $|y\tau|<1.$ Let $y\mapsto q^{M},\tau\mapsto\tau/q^{M}.$ Then
the formula \eqref{eq:9-2} reduces to \eqref{eq:9-1}. Thus, \eqref{eq:9-2}
and \eqref{eq:9-5} are both generalizations of Fang's identity \eqref{eq:9-1}.

In this section, we deduce an $A_{n}$ extension of the identity \eqref{eq:9-5}
using the identity \eqref{eq:t1-1} in Theorem \ref{t1}.
\begin{thm}
\label{t9-1} Let $y_{1},y_{2},\cdots,y_{n},\tau$ be such that $|y_{1}y_{2}\cdots y_{n}\tau|<1.$
Suppose that none of the denominators in the following identity vanishes,
then
\[
\begin{aligned} & \sum_{{j_{r}\geq0\atop r=1,2,\cdots,n}}\prod_{1\leq r<s\leq n}\frac{1-q^{j_{r}-j_{s}}x_{r}/x_{s}}{1-x_{r}/x_{s}}\prod_{r,s=1}^{n}\frac{(x_{r}/x_{s}y_{s})_{j_{r}}}{(qx_{r}/x_{s})_{j_{r}}}\\
 & \quad\times(y_{1}y_{2}\cdots y_{n}\tau)^{|\boldsymbol{j}|}q^{\sum_{r=1}^{n}(r-1)j_{r}}\frac{(q,c)_{|\boldsymbol{j}|}}{(bc)_{|\boldsymbol{j}|}}\prod_{r=1}^{n}(\beta x_{r})_{j_{r}}^{-1}\\
 & =\sum_{{k_{r}\geq0\atop r=1,2,\cdots,n}}\prod_{1\leq r<s\leq n}\frac{1-q^{k_{r}-k_{s}}x_{r}/x_{s}}{1-x_{r}/x_{s}}\prod_{r,s=1}^{n}\frac{(x_{r}/x_{s}y_{s})_{k_{r}}}{(qx_{r}/x_{s})_{k_{r}}}\\
 & \times\frac{(1-\tau q^{2|\boldsymbol{k}|})(q)_{|\boldsymbol{k}|}}{(y_{1}y_{2}\cdots y_{n}\tau)_{|\boldsymbol{k}|+1}}(y_{1}y_{2}\cdots y_{n}\beta\tau)^{|\boldsymbol{k}|}q^{\sum_{r=1}^{n}(r-1)k_{r}+{|\boldsymbol{k}| \choose 2}+\sum_{r=1}^{n}{k_{r} \choose 2}}\\
 & \times\prod_{r=1}^{n}\frac{(q^{1+|\boldsymbol{k}|-k_{r}}\tau/\beta x_{r})_{k_{r}}}{(\beta x_{r})_{k_{r}}}\prod_{r=1}^{n}x_{r}^{k_{r}}\sum_{{0\leq j_{r}\leq k_{r}\atop r=1,\cdots,n}}\prod_{1\leq r<s\leq n}\frac{1-q^{k_{s}-k_{r}+j_{r}-j_{s}}x_{s}/x_{r}}{1-q^{k_{s}-k_{r}}x_{s}/x_{r}}
\end{aligned}
\]
\[
\begin{aligned} & \;\times q^{|\boldsymbol{k}|\boldsymbol{|j}|-\sum_{r=1}^{n}k_{r}j_{r}-e_{2}(j_{1},\cdots,j_{n})+\sum_{r=1}^{n}rj_{r}}\\
 & \;\times\prod_{r,s=1}^{n}\frac{\left(q^{-k_{r}}x_{s}/x_{r}\right)_{j_{r}}}{\left(q^{1+k_{s}-k_{r}}x_{s}/x_{r}\right)_{j_{r}}}\prod_{r=1}^{n}\big\{ x_{r}^{-j_{r}}\left(\tau q^{1-k_{r}+|\boldsymbol{k}|}/\beta x_{r}\right)_{j_{r}}^{-1}\big\}\\
 & \;\times\bigg(\frac{c}{\beta}\bigg)^{\boldsymbol{|j}|}\frac{(\tau q^{|\boldsymbol{k}|})_{|\boldsymbol{j}|}(b)_{|\boldsymbol{j}|}}{(bc)_{|\boldsymbol{j}|}},
\end{aligned}
\]
where $e_{2}(j_{1},\cdots,j_{n})$ is the second elementary symmetric
function of $\{j_{1},\cdots,j_{n}\}.$
\end{thm}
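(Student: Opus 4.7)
The plan is to specialize the master identity \eqref{eq:t1-1} of Theorem \ref{t1} by setting $a\mapsto\tau$ and $b\mapsto 0$, together with the choices
$$K(\boldsymbol{y})=\frac{1-\tau y_1y_2\cdots y_n}{1-\tau}, \qquad \beta(\boldsymbol{j})=\frac{(q,c)_{|\boldsymbol{j}|}\tau^{|\boldsymbol{j}|}}{(bc)_{|\boldsymbol{j}|}\prod_{r=1}^n(\beta x_r)_{j_r}\prod_{r,s=1}^n(qx_r/x_s)_{j_r}}.$$
With $b=0$ all the $b$-dependent $q$-shifted factorials in \eqref{eq:t1-1}, namely $(1-bq^{2|\boldsymbol{j}|})/[(1-b)(bqy_1\cdots y_n)_{|\boldsymbol{j}|}]$, $(bq)_{|\boldsymbol{j}|}/(b)_{2|\boldsymbol{j}|}$ and $(bq^{1+|\boldsymbol{j}|})_{|\boldsymbol{m}|}/(bq^{2|\boldsymbol{j}|+1})_{|\boldsymbol{m}|}$, collapse to $1$, and a direct check shows that the left-hand side of the specialized identity equals $K(\boldsymbol{y})$ times the left-hand side of Theorem \ref{t9-1}.

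The next step is to evaluate the innermost $\boldsymbol{m}$-sum on the right. The combined factor $K(q^{\boldsymbol{m}+\boldsymbol{j}})\cdot(\tau q^{|\boldsymbol{k}|+|\boldsymbol{j}|})_{|\boldsymbol{m}|}/(\tau q)_{|\boldsymbol{m}|+|\boldsymbol{j}|}$ telescopes, via $(1-\tau q^{|\boldsymbol{m}|+|\boldsymbol{j}|})(\tau q)_{|\boldsymbol{m}|+|\boldsymbol{j}|-1}=(\tau q)_{|\boldsymbol{m}|+|\boldsymbol{j}|}$ and $(\tau)_{|\boldsymbol{m}|+|\boldsymbol{j}|}=(\tau)_{|\boldsymbol{j}|}(\tau q^{|\boldsymbol{j}|})_{|\boldsymbol{m}|}$, to $(\tau q^{|\boldsymbol{k}|+|\boldsymbol{j}|})_{|\boldsymbol{m}|}/[(\tau)_{|\boldsymbol{j}|}(\tau q^{|\boldsymbol{j}|})_{|\boldsymbol{m}|}]$. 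The $\boldsymbol{m}$-sum is then precisely in the shape of the $A_n$ $q$-Gauss summation \eqref{eq:8-2}, which evaluates it to $(q)_{|\boldsymbol{k}|}(-\tau)^{|\boldsymbol{k}|-|\boldsymbol{j}|}q^{\binom{|\boldsymbol{k}|}{2}-\binom{|\boldsymbol{j}|}{2}}/[(\tau q^{|\boldsymbol{j}|})_{|\boldsymbol{k}|-|\boldsymbol{j}|}(q)_{|\boldsymbol{j}|}]$. Using $(\tau)_{|\boldsymbol{j}|}(\tau q^{|\boldsymbol{j}|})_{|\boldsymbol{k}|-|\boldsymbol{j}|}=(\tau)_{|\boldsymbol{k}|}$ clears the outer Bailey factor $(\tau)_{|\boldsymbol{k}|}$, and after dividing both sides by $K(\boldsymbol{y})$ the outer denominator becomes $(\tau y_1y_2\cdots y_n)_{|\boldsymbol{k}|+1}$, matching the outer structure demanded by the target.

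The final ingredient is to reconcile the resulting inner $\boldsymbol{j}$-sum, which is in the standard $A_n$ form with factors $\prod_{r<s}(1-q^{j_r-j_s}x_r/x_s)/(1-x_r/x_s)$ and $\prod_{r,s}(q^{-k_s}x_r/x_s)_{j_r}/(qx_r/x_s)_{j_r}$, with the target's reversed form involving $x_s/x_r$, $(q^{-k_r}x_s/x_r)_{j_r}$ and $(q^{1+k_s-k_r}x_s/x_r)_{j_r}$. I plan to accomplish this via the substitution $j_r\mapsto k_r-j_r$ followed by the $q$-Pochhammer reversal identity $(A;q)_{k_r-j_r}=(A;q)_{k_r}\big/\bigl[(-A)^{j_r}q^{j_r(k_r-1)-\binom{j_r}{2}}(q^{1-k_r}/A;q)_{j_r}\bigr]$ applied to each of $(q^{-k_s}x_r/x_s)_{j_r}$ and $(qx_r/x_s)_{j_r}$; these transform into $(q^{1+k_s-k_r}x_s/x_r)_{j_r}$ and $(q^{-k_r}x_s/x_r)_{j_r}$ respectively. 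Simultaneously the Vandermonde-type ratio rewrites, via $1-q^{m}x_r/x_s=-q^m(x_r/x_s)(1-q^{-m}x_s/x_r)$, into $\prod_{r<s}(1-q^{k_s-k_r+j_r-j_s}x_s/x_r)/(1-q^{k_s-k_r}x_s/x_r)$ up to a $\boldsymbol{j}$-independent factor.

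The principal algebraic difficulty is the careful bookkeeping of the $\boldsymbol{j}$-independent prefactors generated by these reversals. The $q$-powers, $x_r$-powers, and signs arising from every reversed $q$-Pochhammer and Vandermonde rewrite must aggregate on the $\boldsymbol{k}$-level to precisely $\beta^{|\boldsymbol{k}|}(-1)^{|\boldsymbol{k}|}q^{\sum\binom{k_r}{2}}\prod_r x_r^{k_r}\prod_r(q^{1+|\boldsymbol{k}|-k_r}\tau/\beta x_r)_{k_r}/(\beta x_r)_{k_r}$, while the residual $\boldsymbol{j}$-dependent $q$-exponent must telescope to $q^{|\boldsymbol{k}||\boldsymbol{j}|-\sum k_rj_r-e_2(j_1,\ldots,j_n)+\sum rj_r}$. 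The appearance of the second elementary symmetric polynomial $e_2(j_1,\ldots,j_n)$ emerges from balancing the $\binom{j_r}{2}$ contributions of the reversed $q$-Pochhammers against the $\binom{|\boldsymbol{j}|}{2}$ and cross-terms produced by the Vandermonde rearrangement; verifying this exponent identity is the principal technical check of the proof.
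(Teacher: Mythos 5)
Your first two steps coincide with the paper's proof: the specialization $a\mapsto\tau$, $b\mapsto 0$ of \eqref{eq:t1-1} with that choice of $K$ and $\beta(\boldsymbol{j})$, the telescoping of $K(q^{\boldsymbol{m+j}})/(\tau q)_{|\boldsymbol{m}|+|\boldsymbol{j}|}$, and the evaluation of the innermost $\boldsymbol{m}$-sum by the $A_n$ $q$-Chu--Vandermonde sum \eqref{eq:8-2} are all exactly what the paper does (its \eqref{eq:9-3-1}). The gap is in your final step. After the $\boldsymbol{m}$-sum is evaluated, the inner $\boldsymbol{j}$-sum is (in the $n=1$ model) $\sum_{j}\frac{(q^{-k},\tau q^{k},c)_{j}}{(q,\beta,bc)_{j}}q^{j}$, while the target is, up to the prefactor, $\sum_{j}\frac{(q^{-k},\tau q^{k},b)_{j}}{(q,\tau q/\beta,bc)_{j}}(cq/\beta)^{j}$. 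These are related by a genuine Sears-type terminating $_{3}\phi_{2}$ transformation (GR (III.13)), \emph{not} by reversing the order of summation. A reversal $j\mapsto k-j$ applied to the first sum converts every numerator parameter $A$ into a denominator parameter $q^{1-k}/A$ and vice versa, producing upper parameters $q^{-k},\,q^{1-k}/\beta,\,q^{1-k}/(bc)$ and lower parameters $q^{1-2k}/\tau,\,q^{1-k}/c$ --- a completely different parameter set. In particular, no reversal can manufacture the isolated factor $(b)_{|\boldsymbol{j}|}$ in the numerator of the target, since $b$ enters the source only through the product $bc$, nor can it keep $\tau q^{|\boldsymbol{k}|}$ as an upper parameter (reversal sends it downstairs as $q^{1-2|\boldsymbol{k}|}/\tau$).

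What misleads you is that the \emph{structural} pieces of the target do look reversed: under $j_{r}\mapsto k_{r}-j_{r}$ the Vandermonde ratio and the $\prod_{r,s}(q^{-k_{s}}x_{r}/x_{s})_{j_{r}}/(qx_{r}/x_{s})_{j_{r}}$ factors do turn into the target's $\prod_{r<s}(1-q^{k_{s}-k_{r}+j_{r}-j_{s}}x_{s}/x_{r})/(1-q^{k_{s}-k_{r}}x_{s}/x_{r})$ and $\prod_{r,s}(q^{-k_{r}}x_{s}/x_{r})_{j_{r}}/(q^{1+k_{s}-k_{r}}x_{s}/x_{r})_{j_{r}}$. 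But the scalar Pochhammers do not, so the difficulty is not ``bookkeeping of prefactors'' --- the approach itself fails. The missing ingredient is the $A_{n}$ Sears transformation of Milne and Lilly (the relation between (6.5a) and (6.5c) of their Theorem 6.5), which the paper specializes (by $c\to 0$ after eliminating one parameter via the Saalsch\"utz balance condition, then $a\mapsto\tau q^{|\boldsymbol{k}|},b\mapsto c,d\mapsto bc,e\mapsto\beta$) to obtain precisely the required terminating $A_{n}$ $_{3}\phi_{2}$ transformation, its \eqref{eq:9-6}; only after that genuine transformation does one do the Pochhammer reversal on the $\boldsymbol{j}$-independent prefactor $\prod_{r}(\beta x_{r}q^{-|\boldsymbol{k}|}/\tau)_{k_{r}}$ to reach the stated form. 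You need to import that transformation (or an equivalent $A_{n}$ analogue of GR (III.13)); it cannot be replaced by series reversal.
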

\noindent{\it Proof.} Recall a relation between (6.5a) and (6.5c)
in \cite[Theorem 6.5]{ML}:
\begin{align*}
 & \sum_{{0\leq j_{r}\leq k_{r}\atop r=1,\cdots,n}}\prod_{1\leq r<s\leq n}\frac{1-q^{j_{r}-j_{s}}x_{r}/x_{s}}{1-x_{r}/x_{s}}\prod_{r,s=1}^{n}\frac{\left(q^{-k_{s}}x_{r}/x_{s}\right)_{j_{r}}}{\left(qx_{r}/x_{s}\right)_{j_{r}}}\\
 & \times\prod_{r=1}^{n}\frac{\left(cx_{r}\right)_{j_{r}}}{(ex_{r})_{j_{r}}}\frac{(a)_{|\boldsymbol{j}|}(b)_{|\boldsymbol{j}|}}{(d)_{|\boldsymbol{j}|}(f)_{|\boldsymbol{j}|}}q^{\sum_{r=1}^{n}rj_{r}}\\
 & =a^{|\boldsymbol{k}|}\frac{(f/a)_{|\boldsymbol{k}|}}{(f)_{|\boldsymbol{k}|}}\prod_{r=1}^{n}\frac{(ex_{r}/a)_{k_{r}}}{(ex_{r})_{k_{r}}}\\
 & \times\sum_{{0\leq j_{r}\leq k_{r}\atop r=1,\cdots,n}}\prod_{1\leq r<s\leq n}\frac{1-q^{k_{s}-k_{r}+j_{r}-j_{s}}x_{s}/x_{r}}{1-q^{k_{s}-k_{r}}x_{s}/x_{r}}q^{\sum_{r=1}^{n}rj_{r}}\\
 & \;\times\prod_{r,s=1}^{n}\frac{\left(q^{-k_{r}}x_{s}/x_{r}\right)_{j_{r}}}{\left(q^{1+k_{s}-k_{r}}x_{s}/x_{r}\right)_{j_{r}}}\prod_{r=1}^{n}\frac{\left(dq^{|\boldsymbol{k}|-k_{r}}/cx_{r}\right)_{j_{r}}}{\left(aq^{1-k_{r}}/ex_{r}\right)_{j_{r}}}\\
 & \;\times\frac{(a)_{|\boldsymbol{j}|}(d/b)_{|\boldsymbol{j}|}}{(d)_{|\boldsymbol{j}|}(aq^{1-|\boldsymbol{k}|}/f)_{|\boldsymbol{j}|}},
\end{align*}
where $def=abcq^{1-|\boldsymbol{N}|}.$ This is actually an $A_{n}$
Sears transformation formula. Substituting $f=abcq^{1-|\boldsymbol{N}|}/de$
into the above identity and then taking $c\rightarrow0$ we obtain
an $A_{n}$ terminating $_{3}\phi_{2}$ transformation formula:
\[
\begin{aligned} & \sum_{{0\leq j_{r}\leq k_{r}\atop r=1,\cdots,n}}\prod_{1\leq r<s\leq n}\frac{1-q^{j_{r}-j_{s}}x_{r}/x_{s}}{1-x_{r}/x_{s}}\prod_{r,s=1}^{n}\frac{\left(q^{-k_{s}}x_{r}/x_{s}\right)_{j_{r}}}{\left(qx_{r}/x_{s}\right)_{j_{r}}}\\
 & \times\prod_{r=1}^{n}(ex_{r})_{j_{r}}^{-1}\frac{(a)_{|\boldsymbol{j}|}(b)_{|\boldsymbol{j}|}}{(d)_{|\boldsymbol{j}|}}q^{\sum_{r=1}^{n}rj_{r}}\\
 & =a^{|\boldsymbol{k}|}\prod_{r=1}^{n}\frac{(ex_{r}/a)_{k_{r}}}{(ex_{r})_{k_{r}}}\sum_{{0\leq j_{r}\leq k_{r}\atop r=1,\cdots,n}}\prod_{1\leq r<s\leq n}\frac{1-q^{k_{s}-k_{r}+j_{r}-j_{s}}x_{s}/x_{r}}{1-q^{k_{s}-k_{r}}x_{s}/x_{r}}\\
 & \;\times q^{|\boldsymbol{k}|\boldsymbol{|j}|-\sum_{r=1}^{n}k_{r}j_{r}-e_{2}(j_{1},\cdots,j_{n})+\sum_{r=1}^{n}rj_{r}}\\
 & \;\times\prod_{r,s=1}^{n}\frac{\left(q^{-k_{r}}x_{s}/x_{r}\right)_{j_{r}}}{\left(q^{1+k_{s}-k_{r}}x_{s}/x_{r}\right)_{j_{r}}}\prod_{r=1}^{n}\big\{ x_{r}^{-j_{r}}\left(aq^{1-k_{r}}/ex_{r}\right)_{j_{r}}^{-1}\big\}\\
 & \;\times\bigg(\frac{b}{e}\bigg)^{\boldsymbol{|j}|}\frac{(a)_{|\boldsymbol{j}|}(d/b)_{|\boldsymbol{j}|}}{(d)_{|\boldsymbol{j}|}}.
\end{aligned}
\]
The $a\mapsto\tau q^{|\boldsymbol{k}|},b\mapsto c,d\mapsto bc,e\mapsto\beta$
case of this identity is
\begin{equation}
\begin{aligned} & \sum_{{0\leq j_{r}\leq k_{r}\atop r=1,\cdots,n}}\prod_{1\leq r<s\leq n}\frac{1-q^{j_{r}-j_{s}}x_{r}/x_{s}}{1-x_{r}/x_{s}}\prod_{r,s=1}^{n}\frac{\left(q^{-k_{s}}x_{r}/x_{s}\right)_{j_{r}}}{\left(qx_{r}/x_{s}\right)_{j_{r}}}\\
 & \times\prod_{r=1}^{n}(\beta x_{r})_{j_{r}}^{-1}\frac{(\tau q^{|\boldsymbol{k}|})_{|\boldsymbol{j}|}(c)_{|\boldsymbol{j}|}}{(bc)_{|\boldsymbol{j}|}}q^{\sum_{r=1}^{n}rj_{r}}\\
 & =(\tau q^{|\boldsymbol{k}|})^{|\boldsymbol{k}|}\prod_{r=1}^{n}\frac{(\beta x_{r}q^{-|\boldsymbol{k}|}/\tau)_{k_{r}}}{(\beta x_{r})_{k_{r}}}\sum_{{0\leq j_{r}\leq k_{r}\atop r=1,\cdots,n}}\prod_{1\leq r<s\leq n}\frac{1-q^{k_{s}-k_{r}+j_{r}-j_{s}}x_{s}/x_{r}}{1-q^{k_{s}-k_{r}}x_{s}/x_{r}}\\
 & \;\times q^{|\boldsymbol{k}|\boldsymbol{|j}|-\sum_{r=1}^{n}k_{r}j_{r}-e_{2}(j_{1},\cdots,j_{n})+\sum_{r=1}^{n}rj_{r}}\\
 & \;\times\prod_{r,s=1}^{n}\frac{\left(q^{-k_{r}}x_{s}/x_{r}\right)_{j_{r}}}{\left(q^{1+k_{s}-k_{r}}x_{s}/x_{r}\right)_{j_{r}}}\prod_{r=1}^{n}\big\{ x_{r}^{-j_{r}}\left(\tau q^{1-k_{r}+|\boldsymbol{k}|}/\beta x_{r}\right)_{j_{r}}^{-1}\big\}\\
 & \;\times\bigg(\frac{c}{\beta}\bigg)^{\boldsymbol{|j}|}\frac{(\tau q^{|\boldsymbol{k}|})_{|\boldsymbol{j}|}(b)_{|\boldsymbol{j}|}}{(bc)_{|\boldsymbol{j}|}}.
\end{aligned}
\label{eq:9-4-1}
\end{equation}

It follows from \cite[eq.(I.8)]{GR} that
\[
(\beta x_{r}q^{-|\boldsymbol{k}|}/\tau)_{k_{r}}=\left(\frac{q^{1+|\boldsymbol{k}|-k_{r}}\tau}{\beta x_{r}}\right)_{k_{r}}\left(-\frac{\beta x_{r}q^{k_{r}-|\boldsymbol{k}|-1}}{\tau}\right)^{k_{r}}q^{-{k_{r} \choose 2}}.
\]
Then
\[
\prod_{r=1}^{n}(\beta x_{r}q^{-|\boldsymbol{k}|}/\tau)_{k_{r}}=\left(-\frac{\beta}{\tau}\right)^{|\boldsymbol{k}|}q^{\sum_{r=1}^{n}{k_{r} \choose 2}-|\boldsymbol{k}|^{2}}\prod_{r=1}^{n}\left(\frac{q^{1+|\boldsymbol{k}|-k_{r}}\tau}{\beta x_{r}}\right)_{k_{r}}\prod_{r=1}^{n}x_{r}^{k_{r}}.
\]
Substituting this identity into \eqref{eq:9-4-1} gives
\begin{equation}
\begin{aligned} & \sum_{{0\leq j_{r}\leq k_{r}\atop r=1,\cdots,n}}\prod_{1\leq r<s\leq n}\frac{1-q^{j_{r}-j_{s}}x_{r}/x_{s}}{1-x_{r}/x_{s}}\prod_{r,s=1}^{n}\frac{\left(q^{-k_{s}}x_{r}/x_{s}\right)_{j_{r}}}{\left(qx_{r}/x_{s}\right)_{j_{r}}}\\
 & \times\prod_{r=1}^{n}(\beta x_{r})_{j_{r}}^{-1}\frac{(\tau q^{|\boldsymbol{k}|})_{|\boldsymbol{j}|}(c)_{|\boldsymbol{j}|}}{(bc)_{|\boldsymbol{j}|}}q^{\sum_{r=1}^{n}rj_{r}}\\
 & =\prod_{r=1}^{n}\frac{(q^{1+|\boldsymbol{k}|-k_{r}}\tau/\beta x_{r})_{k_{r}}}{(\beta x_{r})_{k_{r}}}\left(-\beta\right)^{|\boldsymbol{k}|}q^{\sum_{r=1}^{n}{k_{r} \choose 2}}\prod_{r=1}^{n}x_{r}^{k_{r}}\\
 & \;\times\sum_{{0\leq j_{r}\leq k_{r}\atop r=1,\cdots,n}}\prod_{1\leq r<s\leq n}\frac{1-q^{k_{s}-k_{r}+j_{r}-j_{s}}x_{s}/x_{r}}{1-q^{k_{s}-k_{r}}x_{s}/x_{r}}\prod_{r,s=1}^{n}\frac{\left(q^{-k_{r}}x_{s}/x_{r}\right)_{j_{r}}}{\left(q^{1+k_{s}-k_{r}}x_{s}/x_{r}\right)_{j_{r}}}\\
 & \;\times\prod_{r=1}^{n}\big\{ x_{r}^{-j_{r}}\left(\tau q^{1-k_{r}+|\boldsymbol{k}|}/\beta x_{r}\right)_{j_{r}}^{-1}\big\}\cdot q^{|\boldsymbol{k}|\boldsymbol{|j}|-\sum_{r=1}^{n}k_{r}j_{r}-e_{2}(j_{1},\cdots,j_{n})+\sum_{r=1}^{n}rj_{r}}\\
 & \;\times\bigg(\frac{c}{\beta}\bigg)^{\boldsymbol{|j}|}\frac{(\tau q^{|\boldsymbol{k}|})_{|\boldsymbol{j}|}(b)_{|\boldsymbol{j}|}}{(bc)_{|\boldsymbol{j}|}}.
\end{aligned}
\label{eq:9-6}
\end{equation}

Taking $N_{r}\mapsto k_{r}-j_{r},x_{r}\mapsto q^{j_{r}}x_{r},b\mapsto\tau q^{|\boldsymbol{k}|+|\boldsymbol{j}|},c\mapsto\tau q^{|\boldsymbol{j}|}$
in \eqref{eq:8-1} we have
\begin{equation}
\begin{aligned} & \sum_{{0\leq m_{r}\leq k_{r}-j_{r}\atop r=1,2,\cdots,n}}\prod_{1\leq r<s\leq n}\frac{1-q^{m_{r}-m_{s}+j_{r}-j_{s}}x_{r}/x_{s}}{1-q^{j_{r}-j_{s}}x_{r}/x_{s}}q^{\sum_{r=1}^{n}rm_{r}}\\
 & \times\prod_{r,s=1}^{n}\frac{(q^{-k_{s}+j_{r}}x_{r}/x_{s})_{m_{r}}}{(q^{1+j_{r}-j_{s}}x_{r}/x_{s})_{m_{r}}}\frac{(\tau q^{|\boldsymbol{k}|+|\boldsymbol{j}|})_{|\boldsymbol{m}|}}{(\tau q^{|\boldsymbol{j}|})_{|\boldsymbol{m}|}}\\
 & =\frac{(q^{-|\boldsymbol{k}|})_{|\boldsymbol{k}|-|\boldsymbol{j}|}}{(\tau q^{|\boldsymbol{j}|})_{|\boldsymbol{k}|-|\boldsymbol{j}|}}(\tau q^{|\boldsymbol{k}|+|\boldsymbol{j}|})^{|\boldsymbol{k}|-|\boldsymbol{j}|}\\
 & =\frac{(q)_{|\boldsymbol{k}|}(-\tau)^{|\boldsymbol{k}|-|\boldsymbol{j}|}}{(\tau q^{|\boldsymbol{j}|})_{|\boldsymbol{k}|-|\boldsymbol{j}|}(q)_{|\boldsymbol{j}|}}q^{{|\boldsymbol{k}| \choose 2}-{|\boldsymbol{j}| \choose 2}},
\end{aligned}
\label{eq:9-3-1}
\end{equation}
where we have used the identity \cite[eq.(I.14)]{GR} in the last
equality.

We first set $b\mapsto0,a\mapsto\tau,$ and then take 
\[
K(y)\mapsto1-y_{1}y_{2}\cdots y_{n}\tau
\]
 and
\[
\beta(\boldsymbol{j})\mapsto\frac{(q,c)_{|\boldsymbol{j}|}}{(bc)_{|\boldsymbol{j}|}}\tau^{|\boldsymbol{j}|}\prod_{r=1}^{n}(\beta x_{r})_{j_{r}}^{-1}\prod_{r,s=1}^{n}(qx_{r}/x_{s})_{j_{r}}^{-1}
\]
 in the identity \eqref{eq:t1-1} of Theorem \ref{t1} to get

\[
\begin{aligned} & (1-y_{1}y_{2}\cdots y_{n}\tau)\sum_{{j_{r}\geq0\atop r=1,2,\cdots,n}}\prod_{1\leq r<s\leq n}\frac{1-q^{j_{r}-j_{s}}x_{r}/x_{s}}{1-x_{r}/x_{s}}\prod_{r,s=1}^{n}\frac{(x_{r}/x_{s}y_{s})_{j_{r}}}{(qx_{r}/x_{s})_{j_{r}}}\\
 & \quad\times(y_{1}y_{2}\cdots y_{n}\tau)^{|\boldsymbol{j}|}q^{\sum_{r=1}^{n}(r-1)j_{r}}\frac{(q,c)_{|\boldsymbol{j}|}}{(bc)_{|\boldsymbol{j}|}}\prod_{r=1}^{n}(\beta x_{r})_{j_{r}}^{-1}\\
 & =\sum_{{k_{r}\geq0\atop r=1,2,\cdots,n}}\prod_{1\leq r<s\leq n}\frac{1-q^{k_{r}-k_{s}}x_{r}/x_{s}}{1-x_{r}/x_{s}}\prod_{r,s=1}^{n}\frac{(x_{r}/x_{s}y_{s})_{k_{r}}}{(qx_{r}/x_{s})_{k_{r}}}\\
 & \times\frac{(1-\tau q^{2|\boldsymbol{k}|})(\tau)_{|\boldsymbol{k}|}}{(\tau qy_{1}y_{2}\cdots y_{n})_{|\boldsymbol{k}|}}(y_{1}y_{2}\cdots y_{n})^{|\boldsymbol{k}|}q^{\sum_{r=1}^{n}(r-1)k_{r}}\\
 & \times\sum_{{0\leq j_{r}\leq k_{r}\atop r=1,2,\cdots,n}}\prod_{1\leq r<s\leq n}\frac{1-q^{j_{r}-j_{s}}x_{r}/x_{s}}{1-x_{r}/x_{s}}\prod_{r,s=1}^{n}\frac{(q^{-k_{s}}x_{r}/x_{s})_{j_{r}}}{(qx_{r}/x_{s})_{j_{r}}}\\
 & \;\times\frac{(\tau q^{|\boldsymbol{k}|},q,c)_{|\boldsymbol{j}|}}{(\tau,bc)_{|\boldsymbol{j}|}}(-\tau)^{|\boldsymbol{j}|}q^{\sum_{r=1}^{n}rj_{r}+{|\boldsymbol{j}| \choose 2}}\prod_{r=1}^{n}(\beta x_{r})_{j_{r}}^{-1}\\
 & \;\times\sum_{{0\leq m_{r}\leq k_{r}-j_{r}\atop r=1,2,\cdots,n}}\prod_{1\leq r<s\leq n}\frac{1-q^{m_{r}-m_{s}+j_{r}-j_{s}}x_{r}/x_{s}}{1-q^{j_{r}-j_{s}}x_{r}/x_{s}}q^{\sum_{r=1}^{n}rm_{r}}\\
 & \quad\times\prod_{r,s=1}^{n}\frac{(q^{-k_{s}+j_{r}}x_{r}/x_{s})_{m_{r}}}{(q^{1+j_{r}-j_{s}}x_{r}/x_{s})_{m_{r}}}\frac{(\tau q^{|\boldsymbol{k}|+|\boldsymbol{j}|})_{|\boldsymbol{m}|}}{(\tau q^{|\boldsymbol{j}|})_{|\boldsymbol{m}|}}.
\end{aligned}
\]
Then the result follows by substituting \eqref{eq:9-3-1} into the
inner-most sum $\sum_{{0\leq m_{r}\leq k_{r}-j_{r}\atop r=1,2,\cdots,n}}$
on the right side of the above identity, then applying \eqref{eq:9-6}
to the second sum $\sum_{{0\leq j_{r}\leq k_{r}\atop r=1,2,\cdots,n}}$
and finally multiplying both sides by $\frac{1}{1-y_{1}y_{2}\cdots y_{n}\tau}.$
\qed

\section{An $A_{n}$ extension of Andrews' expansion formula}

Applying a Bailey pair in \cite[Theorem 3]{A86}, Andrews \cite[Theorem 5]{A86}
proved the following celebrated expansion formula\footnote{Note that the representations for $A_{n}$ in \cite[(4.2) and (4.5)]{A86}
are equal. Therefore, for brevity, we choose the former representation
for $A_{n}$ in \cite[Theorem 5]{A86}.} for an infinite product:
\begin{equation}
\frac{(q,bq/y,aq)_{\infty}}{(bq,q/y,aq/y)_{\infty}}=\sum_{n=0}^{\infty}\frac{(1-aq^{2n})(a,y)_{n}(a/y)^{n}q^{n^{2}}}{(1-a)(aq/y,bq)_{n}}\sum_{j=0}^{n}\frac{(b)_{j}}{(q)_{j}}(1/a)^{j}q^{j(1-n)}.\label{eq:81-1}
\end{equation}
The formula \eqref{eq:81-1} is very useful and was used by Andrews
to deduce Hecke modular form identities and generating functions for
the number of representations of an integer as a sum of three squares
and the number of representations of an integer as a sum of three
triangular numbers.

In this section, we apply the identity in Theorem \ref{t21-1} to
give an $A_{n}$ extension of the identity \eqref{eq:81-1}.
\begin{thm}
Suppose that none of the denominators in the following identity vanishes,
then
\begin{equation}
\begin{aligned} & \frac{(q,aq,bq/y_{1}\cdots y_{n})_{\infty}}{(aq/y_{1}\cdots y_{n},bq,q/y_{1}\cdots y_{n})_{\infty}}\\
 & =\sum_{{k_{r}\geq0\atop r=1,2,\cdots,n}}\prod_{1\leq r<s\leq n}\frac{1-q^{k_{r}-k_{s}}x_{r}/x_{s}}{1-x_{r}/x_{s}}\prod_{r,s=1}^{n}\frac{(y_{s}x_{r}/x_{s})_{k_{r}}}{(qx_{r}/x_{s})_{k_{r}}}\\
 & \times\frac{(1-aq^{2|\boldsymbol{k}|})(a,q)_{|\boldsymbol{k}|}(a/y_{1}\cdots y_{n})^{|\boldsymbol{k}|}}{(1-a)(aq/y_{1}\cdots y_{n},bq)_{|\boldsymbol{k}|}}q^{\sum_{r=1}^{n}(r-1)k_{r}+|\boldsymbol{k}|^{2}}\\
 & \times\sum_{{0\leq j_{r}\leq k_{r}\atop r=1,\cdots,n}}\prod_{1\leq r<s\leq n}\frac{1-q^{j_{r}-j_{s}}x_{r}/x_{s}}{1-x_{r}/x_{s}}\cdot q^{\sum_{r=1}^{n}rj_{r}-|\boldsymbol{k}||\boldsymbol{j}|}\\
 & \:\times\prod_{r,s=1}^{n}\frac{\left(q^{-k_{s}}x_{r}/x_{s}\right)_{j_{r}}}{\left(qx_{r}/x_{s}\right)_{j_{r}}}\frac{(b)_{|\boldsymbol{j}|}(1/a)^{|\boldsymbol{j}|}}{(q^{-|\boldsymbol{k}|})_{|\boldsymbol{j}|}}.
\end{aligned}
\label{eq:81-2}
\end{equation}
\end{thm}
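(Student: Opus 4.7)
The plan is to obtain \eqref{eq:81-2} as a specialization of Theorem~\ref{t21-1} together with an auxiliary $A_n$ transformation of the inner multisum. First, I would set $\alpha\mapsto a$, $a_{s}\mapsto q/y_{s}$ ($s=1,\ldots,n$), $b\mapsto bq/a$, $\beta\mapsto q/a$, and $c=d=\gamma=0$ in \eqref{eq:t1-1}. Four of the six infinite products on the left-hand side collapse via $(0;q)_{\infty}=1$, and the remaining ratio is exactly
\[
\frac{(aq,\,bq/y_{1}\cdots y_{n},\,q)_{\infty}}{(aq/y_{1}\cdots y_{n},\,bq,\,q/y_{1}\cdots y_{n})_{\infty}},
\]
which is the LHS of \eqref{eq:81-2}.

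Next I would track the RHS under this specialization. In the outer sum, $\prod_{r,s}(qx_{r}/x_{s}a_{s})_{k_{r}}$ becomes $\prod_{r,s}(y_{s}x_{r}/x_{s})_{k_{r}}$, the Pochhammer $(\alpha a_{1}\cdots a_{n}q^{1-n})_{|\boldsymbol{k}|}$ becomes $(aq/y_{1}\cdots y_{n})_{|\boldsymbol{k}|}$, and $(a_{1}\cdots a_{n})^{|\boldsymbol{k}|}q^{-n|\boldsymbol{k}|}$ becomes $(y_{1}\cdots y_{n})^{-|\boldsymbol{k}|}$. Comparing with the outer data of \eqref{eq:81-2}, the shortfall relative to the target outer factor is precisely $\frac{(q)_{|\boldsymbol{k}|}\,a^{|\boldsymbol{k}|}q^{|\boldsymbol{k}|^{2}}}{(bq)_{|\boldsymbol{k}|}}$; this excess must emerge from reorganizing the inner sum
\[
\sum_{\boldsymbol{m}}\prod_{1\le r<s\le n}\frac{1-q^{m_{r}-m_{s}}x_{r}/x_{s}}{1-x_{r}/x_{s}}\prod_{r,s=1}^{n}\frac{(q^{-k_{s}}x_{r}/x_{s})_{m_{r}}}{(qx_{r}/x_{s})_{m_{r}}}\frac{(aq^{|\boldsymbol{k}|},q)_{|\boldsymbol{m}|}}{(bq)_{|\boldsymbol{m}|}}q^{\sum_{r}rm_{r}}
\]
delivered by Theorem~\ref{t21-1}. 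I would transform this by applying the $U(n+1)$ Sears $_{4}\phi_{3}$ transformation of \cite{MN} (reproduced in Section~\ref{sec:8}) with $N_{r}=k_{r}$, $a_{\text{Sears}}=d_{\text{Sears}}$ (so that $\prod_{r}(ax_{r})_{y_{r}}/(dx_{r})_{y_{r}}$ collapses), $\{b_{\text{Sears}},c_{\text{Sears}}\}=\{aq^{|\boldsymbol{k}|},q\}$, $e_{\text{Sears}}=bq$, and $f_{\text{Sears}}$ determined by the balancing relation $abc=defq^{|\boldsymbol{N}|-1}$. Its output carries the missing $\frac{(q)_{|\boldsymbol{k}|}a^{|\boldsymbol{k}|}q^{|\boldsymbol{k}|^{2}}}{(bq)_{|\boldsymbol{k}|}}$ as a prefactor and reduces the summand to $\frac{(b)_{|\boldsymbol{j}|}(1/a)^{|\boldsymbol{j}|}q^{\sum_{r}rj_{r}-|\boldsymbol{k}||\boldsymbol{j}|}}{(q^{-|\boldsymbol{k}|})_{|\boldsymbol{j}|}}$ as required in \eqref{eq:81-2}; after cancellation with the outer shortfall the two sides coincide.

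The main obstacle will be this inner-sum transformation. In the scalar case $n=1$ it amounts to the identity
\[
\sum_{m=0}^{k}\frac{(q^{-k},aq^{k})_{m}q^{m}}{(bq)_{m}}=\frac{(q)_{k}\,a^{k}q^{k^{2}}}{(bq)_{k}}\sum_{j=0}^{k}\frac{(b)_{j}(1/a)^{j}q^{j(1-k)}}{(q)_{j}},
\]
which is a terminating $_{3}\phi_{2}$ rearrangement underlying Andrews' original derivation of \eqref{eq:81-1} via his Bailey pair. Pinning down the precise specialization of the $U(n+1)$ Sears transformation required at the $A_n$ level, or alternatively constructing an ad hoc $A_n$ identity from the inverse-matrix framework of Section~2, is the technical heart of the proof; once in place, the prefactor match is automatic and \eqref{eq:81-2} follows.
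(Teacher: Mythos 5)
Your top-level strategy is exactly the paper's: specialize Theorem \ref{t21-1} with $c=d=\gamma=0$, $\alpha\mapsto a$, $a_s\mapsto q/y_s$, $b\mapsto bq/a$, $\beta\mapsto q/a$ (you cite \eqref{eq:t1-1}, but you clearly mean the identity of Theorem \ref{t21-1}), and your bookkeeping is correct \textemdash{} the left-hand side matches, the outer ``shortfall'' factor $(q)_{|\boldsymbol{k}|}a^{|\boldsymbol{k}|}q^{|\boldsymbol{k}|^2}/(bq)_{|\boldsymbol{k}|}$ is right, and your scalar prototype of the inner-sum identity is precisely the $n=1$ case of what is needed. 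However, the multivariate inner-sum transformation (the paper's identity \eqref{eq:HB-1}) is the entire content of the proof, and the specific tool you propose for it would fail. With $a_{\mathrm{Sears}}=d_{\mathrm{Sears}}$, $\{b_{\mathrm{Sears}},c_{\mathrm{Sears}}\}=\{aq^{|\boldsymbol{k}|},q\}$ and $e_{\mathrm{Sears}}=bq$, the balancing condition $abc=defq^{|\boldsymbol{N}|-1}$ forces $f_{\mathrm{Sears}}=aq/b$, so the left-hand side of the Milne--Newcomb transformation carries an extra denominator factor $(aq/b)_{|\boldsymbol{j}|}$ that is simply not present in the sum you need to transform; you cannot remove it without introducing an auxiliary parameter and taking a limit. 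Worse, that transformation converts the $|\boldsymbol{j}|$-indexed numerator parameters $b_{\mathrm{Sears}},c_{\mathrm{Sears}}$ into $x_i$-indexed products $\prod_i(dx_i/b,\,dx_i/c)_{j_i}$ on the other side, so a single application cannot produce the required purely $|\boldsymbol{j}|$-indexed output $\dfrac{(b)_{|\boldsymbol{j}|}(1/a)^{|\boldsymbol{j}|}}{(q^{-|\boldsymbol{k}|})_{|\boldsymbol{j}|}}$.

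The paper closes this gap differently: it uses the Milne--Lilly $A_n$ Sears transformation between (6.5a) and (6.5c) of \cite[Theorem 6.5]{ML} (reproduced as \eqref{eq:ML-1}), whose transformed side retains $|\boldsymbol{j}|$-type Pochhammers but lives on a ``reversed'' lattice with factors $q^{k_s-k_r}x_s/x_r$. One application does not suffice; the paper degenerates \eqref{eq:ML-1} (sending $c\to0$ and then an auxiliary parameter $t\to0$) with two different parameter assignments, obtaining \eqref{eq:He-1} and \eqref{eq:He-2}, which share a common left-hand side; equating their right-hand sides and substituting $x_r\mapsto x_rq^{k_r}$ to return to the standard lattice yields \eqref{eq:HB-1}, whose $c\mapsto\alpha b/q$ case is then inserted into the specialized Theorem \ref{t21-1}. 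So while you have correctly isolated what must be proved, the lemma itself is missing from your argument, and the route you sketch toward it needs to be replaced by (or reworked into) the two-step Milne--Lilly argument or an equivalent derivation from the inversion machinery of Section 2.
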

\noindent{\it Proof.} We first prove the following identity: if $k_{1},k_{2},\cdots,k_{n}$
are non-negative integers, then
\begin{equation}
\begin{aligned} & \sum_{{0\leq j_{r}\leq k_{r}\atop r=1,\cdots,n}}\prod_{1\leq r<s\leq n}\frac{1-q^{j_{r}-j_{s}}x_{r}/x_{s}}{1-x_{r}/x_{s}}\\
 & \times\prod_{r,s=1}^{n}\frac{\left(q^{-k_{s}}x_{r}/x_{s}\right)_{j_{r}}}{\left(qx_{r}/x_{s}\right)_{j_{r}}}\frac{(aq^{|\boldsymbol{k}|},q)_{|\boldsymbol{j}|}}{(cq)_{|\boldsymbol{j}|}}q^{\sum_{r=1}^{n}rj_{r}}\\
 & =a^{|\boldsymbol{k}|}q^{|\boldsymbol{k}|^{2}}\frac{(q)_{|\boldsymbol{k}|}}{(cq)_{|\boldsymbol{k}|}}\sum_{{0\leq j_{r}\leq k_{r}\atop r=1,\cdots,n}}\prod_{1\leq r<s\leq n}\frac{1-q^{j_{r}-j_{s}}x_{r}/x_{s}}{1-x_{r}/x_{s}}
\end{aligned}
\label{eq:HB-1}
\end{equation}

\[
\times\prod_{r,s=1}^{n}\frac{\left(q^{-k_{s}}x_{r}/x_{s}\right)_{j_{r}}}{\left(qx_{r}/x_{s}\right)_{j_{r}}}\frac{(c)_{|\boldsymbol{j}|}(1/a)^{|\boldsymbol{j}|}}{(q^{-|\boldsymbol{k}|})_{|\boldsymbol{j}|}}q^{\sum_{r=1}^{n}rj_{r}-|\boldsymbol{k}||\boldsymbol{j}|}.
\]

Recall an $A_{n}$ Sears transformation formula between (6.5a) and
(6.5c) in \cite[Theorem 6.5]{ML}: Suppose that $def=abcq^{1-|\boldsymbol{k}|},$
then
\begin{equation}
\begin{aligned} & \sum_{{0\leq j_{r}\leq k_{r}\atop r=1,\cdots,n}}\prod_{1\leq r<s\leq n}\frac{1-q^{j_{r}-j_{s}}x_{r}/x_{s}}{1-x_{r}/x_{s}}\prod_{r,s=1}^{n}\frac{\left(q^{-k_{s}}x_{r}/x_{s}\right)_{j_{r}}}{\left(qx_{r}/x_{s}\right)_{j_{r}}}\\
 & \times\prod_{r=1}^{n}\frac{\left(cx_{r}\right)_{j_{r}}}{(ex_{r})_{j_{r}}}\frac{(a)_{|\boldsymbol{j}|}(b)_{|\boldsymbol{j}|}}{(d)_{|\boldsymbol{j}|}(f)_{|\boldsymbol{j}|}}\cdot q^{\sum_{r=1}^{n}rj_{r}}\\
 & =a^{|\boldsymbol{k}|}\frac{(f/a)_{|\boldsymbol{k}|}}{(f)_{|\boldsymbol{k}|}}\prod_{r=1}^{n}\frac{(ex_{r}/a)_{k_{r}}}{(ex_{r})_{k_{r}}}\\
 & \times\sum_{{0\leq j_{r}\leq k_{r}\atop r=1,\cdots,n}}\prod_{1\leq r<s\leq n}\frac{1-q^{k_{s}-k_{r}+j_{r}-j_{s}}x_{s}/x_{r}}{1-q^{k_{s}-k_{r}}x_{s}/x_{r}}q^{\sum_{r=1}^{n}rj_{r}}\\
 & \;\times\prod_{r,s=1}^{n}\frac{\left(q^{-k_{r}}x_{s}/x_{r}\right)_{j_{r}}}{\left(q^{1+k_{s}-k_{r}}x_{s}/x_{r}\right)_{j_{r}}}\prod_{r=1}^{n}\frac{\left(dq^{|\boldsymbol{k}|-k_{r}}/cx_{r}\right)_{j_{r}}}{\left(aq^{1-k_{r}}/ex_{r}\right)_{j_{r}}}\\
 & \;\times\frac{(a)_{|\boldsymbol{j}|}(d/b)_{|\boldsymbol{j}|}}{(d)_{|\boldsymbol{j}|}(aq^{1-|\boldsymbol{k}|}/f)_{|\boldsymbol{j}|}}.
\end{aligned}
\label{eq:ML-1}
\end{equation}
Substituting $e=abcq^{1-|\boldsymbol{k}|}/df$ into \eqref{eq:ML-1},
setting $c\rightarrow0,$ and then replacing $f$ by $e,$ and $a$
by $c,$ we obtain
\begin{equation}
\begin{aligned} & \sum_{{0\leq j_{r}\leq k_{r}\atop r=1,\cdots,n}}\prod_{1\leq r<s\leq n}\frac{1-q^{j_{r}-j_{s}}x_{r}/x_{s}}{1-x_{r}/x_{s}}\\
 & \times\prod_{r,s=1}^{n}\frac{\left(q^{-k_{s}}x_{r}/x_{s}\right)_{j_{r}}}{\left(qx_{r}/x_{s}\right)_{j_{r}}}\frac{(b)_{|\boldsymbol{j}|}(c)_{|\boldsymbol{j}|}}{(d)_{|\boldsymbol{j}|}(e)_{|\boldsymbol{j}|}}q^{\sum_{r=1}^{n}rj_{r}}\\
 & =c^{|\boldsymbol{k}|}\frac{(e/c)_{|\boldsymbol{k}|}}{(e)_{|\boldsymbol{k}|}}\sum_{{0\leq j_{r}\leq k_{r}\atop r=1,\cdots,n}}\prod_{1\leq r<s\leq n}\frac{1-q^{k_{s}-k_{r}+j_{r}-j_{s}}x_{s}/x_{r}}{1-q^{k_{s}-k_{r}}x_{s}/x_{r}}\\
 & \times\prod_{r,s=1}^{n}\frac{\left(q^{-k_{r}}x_{s}/x_{r}\right)_{j_{r}}}{\left(q^{1+k_{s}-k_{r}}x_{s}/x_{r}\right)_{j_{r}}}\cdot\left(\frac{bq}{e}\right)^{|\boldsymbol{j}|}q^{\sum_{r=1}^{n}(r-1)j_{r}}\\
 & \;\times\frac{(c)_{|\boldsymbol{j}|}(d/b)_{|\boldsymbol{j}|}}{(d)_{|\boldsymbol{j}|}(cq^{1-|\boldsymbol{k}|}/e)_{|\boldsymbol{j}|}}.
\end{aligned}
\label{eq:H-1}
\end{equation}
Replace $b$ by $d/b,$ $e$ by $cq^{1-|\boldsymbol{k}|}/e,x_{r}$
by $x_{r}q^{-k_{r}}$ in \eqref{eq:H-1} to arrive at 
\begin{equation}
\begin{aligned} & \sum_{{0\leq j_{r}\leq k_{r}\atop r=1,\cdots,n}}\prod_{1\leq r<s\leq n}\frac{1-q^{j_{r}-j_{s}}x_{s}/x_{r}}{1-x_{s}/x_{r}}\prod_{r,s=1}^{n}\frac{\left(q^{-k_{s}}x_{s}/x_{r}\right)_{j_{r}}}{\left(qx_{s}/x_{r}\right)_{j_{r}}}\\
 & \times\frac{(b)_{|\boldsymbol{j}|}(c)_{|\boldsymbol{j}|}}{(d)_{|\boldsymbol{j}|}(e)_{|\boldsymbol{j}|}}\cdot\left(\frac{deq^{|\boldsymbol{k}|}}{bc}\right)^{|\boldsymbol{j}|}q^{\sum_{r=1}^{n}(r-1)j_{r}}
\end{aligned}
\label{eq:H-2}
\end{equation}
\begin{align*}
 & =\frac{(e/c)_{|\boldsymbol{k}|}}{(e)_{|\boldsymbol{k}|}}\sum_{{0\leq j_{r}\leq k_{r}\atop r=1,\cdots,n}}\prod_{1\leq r<s\leq n}\frac{1-q^{j_{r}-j_{s}-k_{r}+k_{s}}x_{r}/x_{s}}{1-q^{-k_{r}+k_{s}}x_{r}/x_{s}}\\
 & \times\prod_{r,s=1}^{n}\frac{\left(q^{-k_{r}}x_{r}/x_{s}\right)_{j_{r}}}{\left(q^{1-k_{r}+k_{s}}x_{r}/x_{s}\right)_{j_{r}}}\frac{(c)_{|\boldsymbol{j}|}(d/b)_{|\boldsymbol{j}|}}{(d)_{|\boldsymbol{j}|}(cq^{1-|\boldsymbol{k}|}/e)_{|\boldsymbol{j}|}}q^{\sum_{r=1}^{n}rj_{r}}.
\end{align*}
Let $b\mapsto aq^{|\boldsymbol{k}|},d\mapsto1/t,e\mapsto cq$ in \eqref{eq:H-2}
we have
\[
\begin{aligned} & \sum_{{0\leq j_{r}\leq k_{r}\atop r=1,\cdots,n}}\prod_{1\leq r<s\leq n}\frac{1-q^{j_{r}-j_{s}}x_{s}/x_{r}}{1-x_{s}/x_{r}}\prod_{r,s=1}^{n}\frac{\left(q^{-k_{s}}x_{s}/x_{r}\right)_{j_{r}}}{\left(qx_{s}/x_{r}\right)_{j_{r}}}\\
 & \times\frac{(c)_{|\boldsymbol{j}|}(aq^{|\boldsymbol{k}|})_{|\boldsymbol{j}|}}{(1/t)_{|\boldsymbol{j}|}(cq)_{|\boldsymbol{j}|}}\cdot\left(\frac{q}{at}\right)^{|\boldsymbol{j}|}q^{\sum_{r=1}^{n}(r-1)j_{r}}\\
 & =\frac{(q)_{|\boldsymbol{k}|}}{(cq)_{|\boldsymbol{k}|}}\sum_{{0\leq j_{r}\leq k_{r}\atop r=1,\cdots,n}}\prod_{1\leq r<s\leq n}\frac{1-q^{j_{r}-j_{s}-k_{r}+k_{s}}x_{r}/x_{s}}{1-q^{-k_{r}+k_{s}}x_{r}/x_{s}}\\
 & \times\prod_{r,s=1}^{n}\frac{\left(q^{-k_{r}}x_{r}/x_{s}\right)_{j_{r}}}{\left(q^{1-k_{r}+k_{s}}x_{r}/x_{s}\right)_{j_{r}}}\frac{(c)_{|\boldsymbol{j}|}(1/atq^{|k|})_{|\boldsymbol{j}|}}{(1/t)_{|\boldsymbol{j}|}(q^{-|\boldsymbol{k}|})_{|\boldsymbol{j}|}}q^{\sum_{r=1}^{n}rj_{r}}
\end{aligned}
\]
Taking the limit $t\rightarrow0$ on both sides of this identity gives
\begin{equation}
\begin{aligned} & \sum_{{0\leq j_{r}\leq k_{r}\atop r=1,\cdots,n}}\prod_{1\leq r<s\leq n}\frac{1-q^{j_{r}-j_{s}}x_{s}/x_{r}}{1-x_{s}/x_{r}}\prod_{r,s=1}^{n}\frac{\left(q^{-k_{s}}x_{s}/x_{r}\right)_{j_{r}}}{\left(qx_{s}/x_{r}\right)_{j_{r}}}\\
 & \times\frac{(aq^{|\boldsymbol{k}|},c)_{|\boldsymbol{j}|}}{(cq)_{|\boldsymbol{j}|}}\left(-\frac{1}{a}\right)^{|\boldsymbol{j}|}q^{\sum_{r=1}^{n}rj_{r}-{|\boldsymbol{j}| \choose 2}}\\
 & =\frac{(q)_{|\boldsymbol{k}|}}{(cq)_{|\boldsymbol{k}|}}\sum_{{0\leq j_{r}\leq k_{r}\atop r=1,\cdots,n}}\prod_{1\leq r<s\leq n}\frac{1-q^{j_{r}-j_{s}-k_{r}+k_{s}}x_{r}/x_{s}}{1-q^{-k_{r}+k_{s}}x_{r}/x_{s}}\\
 & \times\prod_{r,s=1}^{n}\frac{\left(q^{-k_{r}}x_{r}/x_{s}\right)_{j_{r}}}{\left(q^{1-k_{r}+k_{s}}x_{r}/x_{s}\right)_{j_{r}}}\frac{(c)_{|\boldsymbol{j}|}(1/a)^{|\boldsymbol{j}|}}{(q^{-|\boldsymbol{k}|})_{|\boldsymbol{j}|}}q^{\sum_{r=1}^{n}rj_{r}-|\boldsymbol{k}||\boldsymbol{j}|}
\end{aligned}
\label{eq:He-1}
\end{equation}
We set $c\mapsto aq^{|\boldsymbol{k}|},b\mapsto c,d\mapsto cq,e\mapsto1/t$
in \eqref{eq:H-2} to get
\[
\begin{aligned} & \sum_{{0\leq j_{r}\leq k_{r}\atop r=1,\cdots,n}}\prod_{1\leq r<s\leq n}\frac{1-q^{j_{r}-j_{s}}x_{s}/x_{r}}{1-x_{s}/x_{r}}\prod_{r,s=1}^{n}\frac{\left(q^{-k_{s}}x_{s}/x_{r}\right)_{j_{r}}}{\left(qx_{s}/x_{r}\right)_{j_{r}}}\\
 & \times\frac{(aq^{|\boldsymbol{k}|})_{|\boldsymbol{j}|}(c)_{|\boldsymbol{j}|}}{(cq)_{|\boldsymbol{j}|}(1/t)_{|\boldsymbol{j}|}}\cdot\left(\frac{q}{at}\right)^{|\boldsymbol{j}|}q^{\sum_{r=1}^{n}(r-1)j_{r}}\\
 & =\frac{(1/atq^{|\boldsymbol{k}|})_{|\boldsymbol{k}|}}{(1/t)_{|\boldsymbol{k}|}}\sum_{{0\leq j_{r}\leq k_{r}\atop r=1,\cdots,n}}\prod_{1\leq r<s\leq n}\frac{1-q^{j_{r}-j_{s}-k_{r}+k_{s}}x_{r}/x_{s}}{1-q^{-k_{r}+k_{s}}x_{r}/x_{s}}\\
 & \times\prod_{r,s=1}^{n}\frac{\left(q^{-k_{r}}x_{r}/x_{s}\right)_{j_{r}}}{\left(q^{1-k_{r}+k_{s}}x_{r}/x_{s}\right)_{j_{r}}}\frac{(aq^{|\boldsymbol{k}|})_{|\boldsymbol{j}|}(q)_{|\boldsymbol{j}|}}{(cq)_{|\boldsymbol{j}|}(aqt)_{|\boldsymbol{j}|}}q^{\sum_{r=1}^{n}rj_{r}}.
\end{aligned}
\]
Taking the limit $t\rightarrow0$ on both sides of this identity yields
\begin{equation}
\begin{aligned} & \sum_{{0\leq j_{r}\leq k_{r}\atop r=1,\cdots,n}}\prod_{1\leq r<s\leq n}\frac{1-q^{j_{r}-j_{s}}x_{s}/x_{r}}{1-x_{s}/x_{r}}\prod_{r,s=1}^{n}\frac{\left(q^{-k_{s}}x_{s}/x_{r}\right)_{j_{r}}}{\left(qx_{s}/x_{r}\right)_{j_{r}}}\\
 & \times\frac{(aq^{|\boldsymbol{k}|},c)_{|\boldsymbol{j}|}}{(cq)_{|\boldsymbol{j}|}}\left(-\frac{1}{a}\right)^{|\boldsymbol{j}|}q^{\sum_{r=1}^{n}rj_{r}-{|\boldsymbol{j}| \choose 2}}\\
 & =(1/a)^{|\boldsymbol{k}|}q^{-|\boldsymbol{k}|^{2}}\sum_{{0\leq j_{r}\leq k_{r}\atop r=1,\cdots,n}}\prod_{1\leq r<s\leq n}\frac{1-q^{j_{r}-j_{s}-k_{r}+k_{s}}x_{r}/x_{s}}{1-q^{-k_{r}+k_{s}}x_{r}/x_{s}}\\
 & \times\prod_{r,s=1}^{n}\frac{\left(q^{-k_{r}}x_{r}/x_{s}\right)_{j_{r}}}{\left(q^{1-k_{r}+k_{s}}x_{r}/x_{s}\right)_{j_{r}}}\frac{(aq^{|\boldsymbol{k}|})_{|\boldsymbol{j}|}(q)_{|\boldsymbol{j}|}}{(cq)_{|\boldsymbol{j}|}}q^{\sum_{r=1}^{n}rj_{r}}
\end{aligned}
\label{eq:He-2}
\end{equation}
Then the identity \eqref{eq:HB-1} follows easily by combining \eqref{eq:He-1}
and \eqref{eq:He-2} and then replacing $x_{r}$ by $x_{r}q^{k_{r}}$
in the resulting identity.

We now show \eqref{eq:81-2}. Let $c=d=\gamma=0$ in the identity
of Theorem \ref{t21-1} and then replace $\beta$ by $q/\alpha$ in
the resulting identity. We get
\[
\begin{aligned} & \frac{(q,\alpha q,\alpha a_{1}\cdots a_{n}b/q^{n})_{\infty}}{(\alpha a_{1}\cdots a_{n}q^{1-n},\alpha b,a_{1}\cdots a_{n}q^{1-n})_{\infty}}\\
 & =\sum_{{k_{r}\geq0\atop r=1,2,\cdots,n}}\prod_{1\leq r<s\leq n}\frac{1-q^{k_{r}-k_{s}}x_{r}/x_{s}}{1-x_{r}/x_{s}}\prod_{r,s=1}^{n}\frac{(qx_{r}/x_{s}a_{s})_{k_{r}}}{(qx_{r}/x_{s})_{k_{r}}}\\
 & \times\frac{(1-\alpha q^{2|\boldsymbol{k}|})(\alpha)_{|\boldsymbol{k}|}}{(1-\alpha)(\alpha a_{1}\cdots a_{n}q^{1-n})_{|\boldsymbol{k}|}}(a_{1}\cdots a_{n})^{|\boldsymbol{k}|}q^{\sum_{r=1}^{n}(r-1)k_{r}-n|\boldsymbol{k}|}\\
 & \;\times\sum_{{0\leq m_{r}\leq k_{r}\atop r=1,2,\cdots,n}}\prod_{1\leq r<s\leq n}\frac{1-q^{m_{r}-m_{s}}x_{r}/x_{s}}{1-x_{r}/x_{s}}\\
 & \quad\times\prod_{r,s=1}^{n}\frac{(q^{-k_{s}}x_{r}/x_{s})_{m_{r}}}{(qx_{r}/x_{s})_{m_{r}}}\frac{(\alpha q^{|\boldsymbol{k}|},q)_{|\boldsymbol{m}|}}{(\alpha b)_{|\boldsymbol{m}|}}q^{\sum_{r=1}^{n}rm_{r}}.
\end{aligned}
\]
Replacing $a$ by $\alpha,$ $c$ by $\alpha b/q$ in the identity
\eqref{eq:HB-1} and then substituting the resulting identity into
the above identity we have 
\[
\begin{aligned} & \frac{(q,\alpha q,\alpha a_{1}\cdots a_{n}b/q^{n})_{\infty}}{(\alpha a_{1}\cdots a_{n}q^{1-n},\alpha b,a_{1}\cdots a_{n}q^{1-n})_{\infty}}\\
 & =\sum_{{k_{r}\geq0\atop r=1,2,\cdots,n}}\prod_{1\leq r<s\leq n}\frac{1-q^{k_{r}-k_{s}}x_{r}/x_{s}}{1-x_{r}/x_{s}}\prod_{r,s=1}^{n}\frac{(qx_{r}/x_{s}a_{s})_{k_{r}}}{(qx_{r}/x_{s})_{k_{r}}}\\
 & \times\frac{(1-\alpha q^{2|\boldsymbol{k}|})(\alpha,q)_{|\boldsymbol{k}|}(\alpha a_{1}\cdots a_{n})^{|\boldsymbol{k}|}}{(1-\alpha)(\alpha a_{1}\cdots a_{n}q^{1-n},\alpha b)_{|\boldsymbol{k}|}}q^{\sum_{r=1}^{n}(r-1)k_{r}-n|\boldsymbol{k}|+|\boldsymbol{k}|^{2}}\\
 & \times\sum_{{0\leq j_{r}\leq k_{r}\atop r=1,\cdots,n}}\prod_{1\leq r<s\leq n}\frac{1-q^{j_{r}-j_{s}}x_{r}/x_{s}}{1-x_{r}/x_{s}}\cdot q^{\sum_{r=1}^{n}rj_{r}-|\boldsymbol{k}||\boldsymbol{j}|}\\
 & \:\times\prod_{r,s=1}^{n}\frac{\left(q^{-k_{s}}x_{r}/x_{s}\right)_{j_{r}}}{\left(qx_{r}/x_{s}\right)_{j_{r}}}\frac{(\alpha b/q)_{|\boldsymbol{j}|}(1/\alpha)^{|\boldsymbol{j}|}}{(q^{-|\boldsymbol{k}|})_{|\boldsymbol{j}|}}.
\end{aligned}
\]
Then the identity \eqref{eq:81-2} follows readily by replacing $a_{i}$
by $q/y_{i}$ for $i=1,2,\cdots,n,$ $b$ by $bq/a,$ and $\alpha$
by $a$ in this identity. \qed

\end{document}